\newtheorem{theorem}{Theorem}[section]
\newtheorem{lemma}[theorem]{Lemma}
\newtheorem{proposition}[theorem]{Proposition}
\newtheorem{corollary}[theorem]{Corollary}
\newtheorem{definition}[theorem]{Definition}
\theoremstyle{remark}
\numberwithin{equation}{section}
\newcommand{\Z}{\mathbb{Z}}
\newcommand{\Q}{\mathbb{Q}}
\newcommand{\R}{\mathbb{R}}
\begin{document}

\date{\today}

\title[Smooth factors]{Smooth Factors of Projective Actions of Higher Rank Lattices and Rigidity}
\author[A. Gorodnik]{Alexander Gorodnik $^\ast$}

\author[R. Spatzier]{Ralf Spatzier $^{\ast\ast}$}

\thanks{$^\ast$ Supported in part by EPSRC grant EP/H000091/1,  ERC grant 239606 and MSRI}

\thanks{$^{\ast \ast}$ Supported in part by NSF grant DMS--1307164 and MSRI}

\address{Department of Mathematics, University of Bristol, Bristol, BS8 1TW, U.K. }

\email{a.gorodnik@bristol.ac.uk}

 \address{Department of Mathematics, University of Michigan, Ann Arbor, MI 48109.}

\email{spatzier@umich.edu}

\begin{abstract} We   study smooth factors of the standard actions of  lattices in higher rank semisimple Lie groups on flag manifolds. Under a mild condition on existence of a single differentiable sink, we show that these factors are $C^{\infty}$-conjugate to the standard actions on flag manifolds. 
\end{abstract}

\maketitle

\section{Introduction}

Let $\Gamma$ be a lattice in a connected semisimple Lie group $G$, and let $P$ be  a parabolic subgroup of $G$.  In this paper, we will be interested in the action of $\Gamma$ on the flag manifold $F=G/P$ by left translations.
% preserving the measure class of Haar measure. 
The simplest example is given by the linear action of a lattice in
$\hbox{SL}_n(\mathbb{R})$ on the projective space $\mathbb{P}^{n-1}$. 
These actions have played a major role in the Rigidity Theory. In particular, understanding their dynamics proved crucial in Margulis' Superrigidity and Finiteness Theorems.
Margulis \cite{margulis} classified all measurable factors of the $\Gamma$-action on $F=G/P$ 
when $G$ has real rank at least two, and the lattice $\Gamma$ is irreducible.
He showed that every such factor is  measurably isomorphic to the $\Gamma$-action on $F'=G/Q$ where $Q$ is a parabolic subgroup containing $P$.
This was one of the ingredients in his proof of Margulis'
 Finiteness Theorem  which shows that all normal subgroups of $\Gamma$ are either of finite index or central.  %(see \cite{margulis-book} and \cite{zimmer}). 
Dani \cite{Dani} analysed topological factors of these actions when $G$ has no compact or real rank-one factors.  He proved  that any Hausdorff factor
of the action of $\Gamma$ on $F=G/P$ is $C^0$-conjugate to the action of $\Gamma$ on $F'=G/Q$, where $Q$ is a parabolic subgroup containing $P$.
The aim of this paper is to establish a smooth analogue of Margulis' and Dani's factor theorems.  
 Our results also complement recent work by Brown, Rodriguez Hertz and Wang on low dimensional actions of higher rank lattices, and provide partial solutions and further evidence  for their Conjecture 1.8  \cite{BRHW2016}.

The actions on flag manifolds are very different from measure-preserving actions, and
one of their essential features is existence of sinks.
Since analysis of dynamics in neighbourhoods of sinks will play central role in our
discussion, we give precise definitions.

\begin{definition} \label{d:sink}
{\rm
Let $p$ be a fixed point of a $C^1$-map $f$ on a manifold $M$.
\begin{enumerate}
	\item[(i)] The point $p$ is called a \emph{topological sink} if there exists a neighbourhood $W_0$ of $p$ such that for every neighbourhood $W$ of $p$ and all sufficiently large $n$, we have $f^n(W_0)\subset W$.
	\item[(ii)] The point $p$ is called a
	\emph{differentiable sink} if all eigenvalues of $D(f)_p$ have modulus less than one,
	or equivalently, there exists a Riemannian metric in a neighbourhood $U$ of $p$ such that $\|D(f)_x\|<1$ for all $x\in U$.
\end{enumerate}
}
\end{definition}

It is easy to see that a differentiable sink is always a topological sink, while the converse fails. In the case of actions on flag manifolds $F=G/P$, every $\mathbb{R}$-regular
element $g\in G$ has a unique differentiable sink. We recall that an element $g\in G$ is called \emph{$\mathbb{R}$-regular} if the number of its eigenvalues, counted with multiplicity, of $\hbox{Ad}(g)$ is minimal possible.

The main result of the paper is

\begin{theorem}\label{th:main}
Let $G$ be a connected semisimple Lie group without compact or real rank-one factors. Let $P$ be a parabolic subgroup of $G$ and $F=G/P$ the corresponding flag manifold.
Let $\Gamma$ be a lattice in $G$.
We denote by $\rho_0$ the standard action of $\Gamma$ on $F$.
Let $\rho $ be a $C^{\infty}$-action of $\Gamma$ on a  manifold $M$
such that for some $\gamma \in \Gamma$, the transformation $\rho(\gamma)$ has a differentiable sink in $M$. 
Suppose   $\psi: F \to M$ is a $C^0$-semi-conjugacy between $\rho _0$ and $\rho$.  Then there exist a parabolic subgroup $Q$ containing $P$ and a $C^{\infty}$-smooth $\Gamma$-equivariant diffeomorphism $\phi: G/Q \to M$ such that 
$ \phi = \psi \circ \pi$, where $\pi: G/P \to G/Q$ is the canonical factor map.
\begin{diagram}
	F= G/P & & \\
	\dTo^\psi  & \rdTo^\pi &\\ 
	M & \lDashTo_\phi & G/Q
\end{diagram}
\end{theorem}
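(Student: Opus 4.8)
\medskip
\noindent\emph{Sketch of the intended argument.} The plan is to settle first the purely topological classification and then use the differentiable sink to boost all the regularity to $C^\infty$.

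\noindent\emph{Step 1: reduction to a topological conjugacy.} Since $M$ is a manifold it is Hausdorff, so $(M,\rho)$ is a Hausdorff topological factor of $(F,\rho_0)$ via $\psi$. I would apply Dani's topological factor theorem \cite{Dani} (more precisely, what its proof yields): the closed $\Gamma$-invariant equivalence relation $R_\psi=\{(x,y)\in F\times F:\psi(x)=\psi(y)\}$ coincides with the fibre relation of the canonical projection $\pi\colon G/P\to G/Q$ for a unique parabolic $Q\supseteq P$. Hence $\psi$ descends to a continuous $\Gamma$-equivariant bijection $\phi_0\colon G/Q\to M$ with $\psi=\phi_0\circ\pi$, which is a homeomorphism because $G/Q$ is compact and $M$ is Hausdorff. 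It then remains to prove that $\phi_0$ is a $C^\infty$-diffeomorphism; with $\phi:=\phi_0$ the identity $\psi=\phi\circ\pi$ holds by construction. (Alternatively, one might reprove this step together with the smoothness, using the hyperbolic tools below.)

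\noindent\emph{Step 2: localising at the sink.} Let $p\in M$ be the differentiable sink of $\rho(\gamma)$ and $q_0:=\phi_0^{-1}(p)\in G/Q$. As a homeomorphic image of a differentiable sink, $q_0$ is a topological sink of $\rho_0(\gamma)$. In the ``big cell'' chart around $q_0$ --- a translate of $\exp\mathfrak{u}^-\hookrightarrow G/Q$, with $U^-$ the unipotent radical of the parabolic opposite to the one fixing $q_0$ --- the map $\rho_0(\gamma)$ is an explicit polynomial transformation fixing the origin, whose linear part is conjugate to $\operatorname{Ad}(\gamma)|_{\mathfrak{u}^-}$ (with eigenvalue moduli governed by the hyperbolic part $\gamma_h$ of $\gamma$), and $\phi_0$ carries this open, dense big cell into the basin $\mathcal{B}$ of $p$, which is open precisely because $p$ is a \emph{differentiable} sink. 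After replacing $\gamma$ by a power if necessary, $\rho(\gamma)|_{\mathcal{B}}$ is $C^\infty$-conjugate to a polynomial normal form determined by $A:=D\rho(\gamma)_p$ on $T_pM$ --- by Sternberg's theorem in the non-resonant case and by the Poincar\'e--Dulac--Guysinsky--Katok normal form in general. Thus, on the big cell, $\phi_0$ intertwines two explicit polynomial contractions of $\R^{\dim M}$.

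\noindent\emph{Step 3: matching the local models, propagating, and the main obstacle.} To conclude that $\phi_0$ is $C^\infty$ on the big cell it suffices to show the two polynomial models agree up to a linear change of coordinates, for which the key point is that $A$ and $\operatorname{Ad}(\gamma)|_{\mathfrak{u}^-}$ have the same eigenvalue moduli and the same resonance pattern. Here the higher-rank hypothesis must enter essentially: the full $\Gamma$-action supplies far more than the single element $\gamma$ --- notably the $\R$-regular elements of $\Gamma$ (which exist in the higher-rank setting) and the large centraliser of $\gamma$, whose derivatives at the relevant fixed points are all intertwined by $D\phi_0$ --- and a cocycle/measure-rigidity argument in the spirit of Zimmer's superrigidity, using the invariant point mass at $p$, should pin the spectral and resonance data of $A$ to the algebraic ones. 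Granting this, $\phi_0^{\pm 1}$ are $C^\infty$ on the big cell; since the $\Gamma$-action on $G/Q$ is minimal its $\Gamma$-translates cover $G/Q$, and rerunning the argument with the conjugates $\delta\gamma\delta^{-1}$ (which inherit differentiable sinks) shows $\phi_0$ is a local diffeomorphism everywhere, hence the required $\Gamma$-equivariant diffeomorphism. The genuinely hard part is exactly this transfer of smoothness across the semiconjugacy at the sink: a topological conjugacy between smooth contractions is in general very far from smooth --- already $x\mapsto x/2$ and $x\mapsto x/4$ on $\R$ are topologically but not smoothly conjugate --- so the proof must extract from the higher-rank structure enough rigidity to force $D\rho(\gamma)_p$ to carry precisely the spectrum and resonances of the algebraic model, and must cope with resonant terms and the possible non-semisimplicity of $\gamma$ (so that one is comparing genuinely polynomial, not merely linear, normal forms). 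A subsidiary point needing care is the compatibility of Steps 1 and 2 --- checking that the parabolic $Q$ produced in Step 1 is precisely the one along whose unipotent radical the sink dynamics of $\rho(\gamma)$ live, equivalently that the topological sink $q_0$ on $G/Q$ is actually a differentiable sink and $\gamma_h$ is regular relative to $Q$.
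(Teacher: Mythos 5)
Your Step 1 (invoking Dani to produce $Q$ and a $C^0$-conjugacy $\phi_0\colon G/Q\to M$) is exactly how the paper begins its final section, and your instinct to use polynomial normal forms at the differentiable sink is also an ingredient of the actual proof. But Step 3 contains a genuine gap, and it is located precisely at the point you yourself flag as ``the genuinely hard part.'' The claim that it ``suffices to show the two polynomial models agree up to a linear change of coordinates'' is false: even if $D\rho(\gamma)_p$ had literally the same spectrum and resonance pattern as $\mathrm{Ad}(\gamma)|_{\mathfrak{u}^-}$, so that the two local models were identical polynomials, a topological conjugacy between a smooth contraction and \emph{itself} is just an element of its topological centralizer, which is huge and generically nowhere differentiable (prescribe it arbitrarily on a fundamental annulus and extend by equivariance). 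So no amount of spectral matching for the single element $\gamma$ can force $\phi_0$ to be smooth on the basin; the rigidity must come from many group elements acting simultaneously. Moreover, your appeal to ``the large centraliser of $\gamma$, whose derivatives at the relevant fixed points are all intertwined by $D\phi_0$'' is circular, since the existence of $D\phi_0$ is exactly what is to be proved, and the proposed ``cocycle/measure-rigidity argument using the point mass at $p$'' is not an argument but a placeholder.

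What the paper actually does at this stage is quite different and is where all the work lies. First it upgrades the single sink to a Zariski dense subsemigroup of elements with differentiable sinks (Proposition \ref{manysinks}, via Benoist's $(r,\epsilon)$-proximality and a chain-rule estimate; this is needed because the hypothesised $\gamma$ need not be $\R$-regular or have a useful centraliser), and then uses Prasad--Rapinchuk to pick a regular $\R$-regular anisotropic $\gamma_0$ in that semigroup. The normal form theorem of Guysinsky--Katok is used not to compare two local models but to show that limits of centraliser elements $\rho(\delta_n^{(\alpha_0)})$ are smooth (being polynomials of bounded degree in normal form coordinates); this produces smooth projection maps $\tilde\pi_0^{(\alpha_0)}$ on $M$ intertwined with the algebraic projections $\pi_0^{(\alpha_0)}$ via $\phi$ (Lemma \ref{l:proj2}, Proposition \ref{equivariant projection}), and shows $\phi$ is smooth along the orbit foliation of the closure of the centraliser action on $U_I^{(\alpha_0)}$. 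Smoothness of $\phi|_{U_I^{(\alpha_0)}}$ on an open dense set is then obtained by generating enough transverse foliations, using equidistribution of horospherical flows on $\Gamma\backslash G$ (Lemma \ref{l:mixing}) together with irreducibility of the $\mathrm{Ad}(M)$-action on the root spaces $\mathfrak g_{-\alpha_0}$, $\mathfrak g_{-2\alpha_0}$ (Lemma \ref{l:m}) --- this is where higher rank is essential. Finally, the ``marionette'' argument of Section \ref{sec:final} combines the projections $\pi^{(\alpha)}_\delta$ for finitely many $(\alpha,\delta)$ into an immersion (using Lemma \ref{l:gp}), writes $\phi^{-1}=\Pi^{-1}\Phi^{-1}\tilde\Pi$ locally, and spreads smoothness by minimality --- the only part of your Step 3 that survives. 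In short, the architecture ``match the two local normal forms, then conjugacy is smooth'' cannot be repaired as stated; the smoothness has to be extracted from the interaction of the sink with the centraliser dynamics and the global equidistribution, as the paper does.
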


It is clear that any $C^0$-factor of a standard action on a flag manifold has topological sinks as the original action does.  However, we don't know whether the existence of a differentiable sink is automatic for $C^0$-factors of standard actions.  There are examples of smooth lattice actions on $S^1$-bundles over flag manifolds which have topological sinks that are not differentiable sinks.
(see Section \ref{sec:sink} below).  

The assumption regarding existence of a differentiable sink in Theorem \ref{th:main} is 
similar to the hyperbolicity assumptions that appeared in previous works on rigidity
in the setting of measure-preserving actions.  Our main result could be considered as an analogue of the smooth rigidity theorems for higher rank Anosov actions \cite{Katok-Spatzier,Fisher-Kalinin-Spatzier,RH-Wang}. There again one proves regularity of a $C^0$-conjugacy under suitable uniform hyperbolicity hypothesises.

As an application of our main result, we also get local rigidity results. Given a $C^{\infty}$-action $\rho _0$ of a finitely generated group $\Gamma$ on a compact manifold $M$, we call
the action $\rho _0$  $C^1$-{\em locally rigid} if any other $C^{\infty}$-action $\rho $ of $\Gamma$ on $M$ is $C^{\infty}$-conjugate to $\rho _0$ provided that for a finite set of generators $S$ of $\Gamma$, the maps $\rho (s)$ and $\rho _0 (s)$ are sufficiently close in the $C^1$-topology for all $s \in S$.  There is by now a long history of local rigidity results for higher rank actions and, in particular, higher rank lattices (see \cite{Ghys, Kanai, Katok-Spatzier,  Fisher-Margulis} amongst others and \cite{Fisher} for a survey).  The following result was already obtained by M. Kanai in \cite{Kanai} under a more stringent closeness condition and by A. Katok and R. Spatzier in \cite{Katok-Spatzier}.

\begin{corollary}  \label{local rigidity}
Let $G$ and $P$ be as above.  Let $\Gamma$ be a unifom lattice in $G$.  Then the $\Gamma$-action on $F=G/P$ is locally $C^1$-rigid.  
\end{corollary}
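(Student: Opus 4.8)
The plan is to deduce Corollary~\ref{local rigidity} from Theorem~\ref{th:main}. Fix a finite generating set $S$ of $\Gamma$ and suppose $\rho$ is a $C^{\infty}$-action of $\Gamma$ on $M=F=G/P$ with $\rho(s)$ sufficiently $C^{1}$-close to $\rho_0(s)$ for all $s\in S$. It suffices to verify the two hypotheses of Theorem~\ref{th:main}: that $\rho(\gamma)$ has a differentiable sink for some $\gamma\in\Gamma$, and that there is a $C^{0}$-semi-conjugacy $\psi\colon F\to M$ between $\rho_0$ and $\rho$. Granting these, Theorem~\ref{th:main} furnishes a parabolic $Q\supseteq P$ and a $C^{\infty}$ $\Gamma$-equivariant diffeomorphism $\phi\colon G/Q\to M$. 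Since $\phi$ is a diffeomorphism and $M=G/P$, we get $\dim G/Q=\dim G/P$; but $P\subseteq Q$ yields a fibration $G/P\to G/Q$ with fibre $Q/P$, so $\dim Q/P=0$ and hence $Q=P$. Then $\phi\colon G/P\to M$ is a $C^{\infty}$ $\Gamma$-equivariant diffeomorphism intertwining the standard action $\rho_0$ on $G/P$ with $\rho$, i.e.\ a $C^{\infty}$-conjugacy, as required.

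For the first hypothesis, fix an $\mathbb{R}$-regular element $\gamma_0\in\Gamma$; such elements exist in every lattice in a semisimple Lie group, and in particular in the uniform lattice $\Gamma$ (Prasad--Raghunathan, Benoist--Labourie). As recalled in the introduction, $\rho_0(\gamma_0)$ then has a differentiable sink $p\in F$, so all eigenvalues of $D(\rho_0(\gamma_0))_p$ have modulus $<1$. Write $\gamma_0$ as a fixed word in $S$. Since composition is continuous in the $C^{1}$-topology on $\mathrm{Diff}^{1}(F)$ ($F$ being compact), for $\rho$ sufficiently $C^{1}$-close to $\rho_0$ on $S$ the map $\rho(\gamma_0)$ is $C^{1}$-close to $\rho_0(\gamma_0)$. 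A hyperbolic attracting fixed point is persistent: the implicit function theorem produces a unique fixed point $p'$ of $\rho(\gamma_0)$ near $p$, and by continuity of the spectrum $D(\rho(\gamma_0))_{p'}$ still has all eigenvalues of modulus $<1$. Thus $\rho(\gamma_0)$ has a differentiable sink, verifying the first hypothesis with $\gamma=\gamma_0$.

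The substantive point, and the main obstacle, is the construction of the $C^{0}$-semi-conjugacy $\psi$; this is where cocompactness of $\Gamma$ enters. The element $\rho_0(\gamma_0)$ is a Morse--Smale diffeomorphism of $F$: it has finitely many fixed points (the $\gamma_0$-fixed points of $G/P$, indexed by the relevant Weyl group data), each hyperbolic, with stable and unstable manifolds arising from Bruhat-type cells that meet transversally and without cycles. By the Palis--Smale structural stability theorem, $\rho(\gamma_0)$ is topologically conjugate to $\rho_0(\gamma_0)$ via a homeomorphism $h$ of $F$ that is $C^{0}$-close to the identity. The remaining --- and genuinely delicate --- task is to upgrade $h$ so that it intertwines the full $\Gamma$-action. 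One route uses property~(T) of $\Gamma$: embedding $F$ into a projective space attached to a representation of $G$ in a $\Gamma$-equivariant way, the perturbed action determines a cocycle that is $C^{0}$-close to the standard one, and a fixed-point argument in an appropriate space of continuous maps (whose compactness uses that $\Gamma\backslash G$ is compact) produces a continuous $\Gamma$-equivariant map $\psi\colon F\to M$ compatible with $h$. An alternative route is to observe that the conjugated family $\{h^{-1}\rho(\gamma)h\}_{\gamma\in\Gamma}$ is a $C^{0}$-small perturbation of $\rho_0$ all of whose members commute with the fixed Morse--Smale diffeomorphism $\rho_0(\gamma_0)$, and to deduce from the rigidity of this configuration that it agrees with $\rho_0$ up to a $\Gamma$-equivariant continuous change of coordinates. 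Either way one obtains a $C^{0}$-semi-conjugacy $\psi$ between $\rho_0$ and $\rho$, and the reduction in the first paragraph completes the proof.
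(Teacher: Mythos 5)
Your reduction to Theorem \ref{th:main} is fine: persistence of a hyperbolic attracting fixed point under a $C^1$-small perturbation gives the differentiable sink, and the dimension count $\dim G/Q=\dim G/P$ forcing $Q=P$ is a correct (and worth stating) observation. The problem is the third paragraph, which is where all of the actual content of the corollary lies and where no proof is given. Structural stability of the single Morse--Smale diffeomorphism $\rho_0(\gamma_0)$ only produces a homeomorphism $h$ conjugating $\rho(\gamma_0)$ to $\rho_0(\gamma_0)$; nothing in the Palis--Smale theorem constrains how $h$ interacts with the other generators, and a conjugacy for one element of $\Gamma$ is very far from a conjugacy of actions. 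Your two proposed ``routes'' for the upgrade are not arguments: no cocycle or affine isometric action on a Hilbert space is actually defined, so property (T) has nothing to act on, and the appeal to compactness of $\Gamma\backslash G$ is not connected to any fixed-point theorem applicable to this nonlinear space of continuous maps; the second route (``deduce from the rigidity of this configuration that it agrees with $\rho_0$ up to a $\Gamma$-equivariant continuous change of coordinates'') assumes exactly the statement to be proved. Note also that the topological centralizer of a Morse--Smale diffeomorphism is in general huge, so commuting with $\rho_0(\gamma_0)$ alone does not rigidify the perturbed generators.

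The missing idea is the one the paper uses, following Ghys and Katok--Spatzier: a $C^1$-small perturbation of the $\Gamma$-action on $F=G/P$ corresponds, via a suspension-type construction, to a $C^1$-small perturbation of the left-translation action of the Cartan subgroup $A$ on the compact homogeneous space $G/\Gamma$ (this is precisely where cocompactness of $\Gamma$ enters, through compactness of $G/\Gamma$, not through a compactness of a function space as in your sketch). That $A$-action is Anosov, hence structurally stable, and the resulting conjugacy of the perturbed $A$-action to the algebraic one translates back into a $\Gamma$-equivariant $C^0$-conjugacy between $\rho_0$ and $\rho$ on $F$. With that semi-conjugacy in hand, your first two paragraphs complete the proof; without it, the argument has a genuine gap at its central step.
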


 We obtain this as an almost immediate corollary of our main theorem. In fact, following Ghys \cite{Ghys},  if $\Gamma$ is cocompact, Katok and Spatzier \cite{Katok-Spatzier} constructed a $C^1$-close perturbation of the action of a Cartan subgroup $A$ on $G/\Gamma$ by left translations that corresponds to the perturbation of the $\Gamma$-action on $G/P$.  This action is Anosov, and thus structurally stable.  This in turn gives a $C^0$-conjugacy between the action of $\Gamma$ on the flag manifold $F$ and its perturbation.  Hence, now Corollary \ref{local rigidity} follows immediately from Theorem \ref{th:main}.  In a sense our argument is dual to the argument in \cite{Katok-Spatzier}. 
 
 We do not know if local rigidity holds for actions of non-uniform lattices on flag manifolds.  Our argument shows that it suffices to prove structural stability for such actions. 

Our results fail  for cocompact lattices $\Gamma$ in $\hbox{PSL}_2(\R)$. Indeed, the moduli space of discrete faithful cocompact representations of $\Gamma$ in $\hbox{PSL}_2(\R)$ is non-trivial (in fact, it has positive dimension by standard results of Teichm\"{u}ller theory). Let us take two representations $\rho_1$ and $\rho _2$ that are not equivalent, and consider the resulting extension of the  natural actions by isometries on the hyperbolic  plane ${\mathcal H}^2$.  The  boundary circle $\partial {\mathcal H}^2$ of ${\mathcal H}^2$ is naturally identified with one-dimensional projective space $\R P ^1 \simeq \hbox{PSL}_2(\R) / P$, where $P$ denotes the parabolic subgroup of $\hbox{PSL}_2(\R)$ consisting of upper triangular matrices.  Both  $\rho _1$ and $\rho _2$ define $C^{\infty}$-actions of $\Gamma$ on $\partial {\mathcal H}^2$.  Since $\rho _1 (\Gamma)$ and $\rho _2 (\Gamma)$  are quasi-isometric, their actions on the boundary are $C^0$-conjugate.   The conjugating homeomorphism however  cannot be differentiable 
even at a single point by well-known results of Ivanov \cite{Ivanov} and Tukia \cite{Tukia}. Similar constructions can be made for Zariski dense groups in higher dimension, for example, for convex cocompact groups via quasi-conformal deformations in 3-dimensional hyperbolic space.
Typically, one has a large moduli space of representations for Zariski dense groups, and we expect that the above  considerations generalize to give counterexamples
for actions of Zariski dense subgroups on flag manifolds.
%Moreover one can deform   Zariski dense free subgroups in any semi simple Lie group. Their induced projective actions will not be $C^1$-conjugate as we may change the derivatives at sinks.  Other counterexamples come from  Benoist' projective groups which have large deformation space.   
It is not clear to us whether lattices in other real rank-one groups have more rigidity.  We remark that both Margulis' and Dani's theorem  fail in the real rank-one case (see \cite{margulis, spatzier, zimmer0}).

%In higher rank, there may be counterexamples coming form Benoist' projective groups.  CHECK. 
%We don't know if it suffices to assume minimality.  

%However, if the map is Holder then everything maybe still works. (??????)
%{\color{red}corollary about Hoelder maps?}

\subsection*{Organisation of the paper}
In the next section we recall basic properties of the standard actions on flag manifolds $F$.
Next, in Section \ref{sec:many sinks} we investigate general smooth actions on manifold $M$
which are continuously conjugate to the standard actions and establish existence of many sinks in $M$.
In Section \ref{sec:projections}, we analyse properties of the conjugacy map $F\to M$
further and introduce projection maps 
to certain dynamically defined submanifolds of a sink.
These maps are defined on open subsets of $F$ and $M$ and are intertwined by the conjugacy $F\to M$. Then in Section \ref{s:foliations} we establish smoothness of the map $F\to M$ along a family of foliations. Finally, in Section \ref{sec:final}
we complete the proof combining results from Sections \ref{sec:projections} and \ref{s:foliations}. In Section \ref{sec:sink}, we give an example of a lattice action with 
a topological sink which is not a differentiable sink.

\subsection*{Acknowledgements}
We are grateful to  David Fisher and Gopal Prasad for many pertinent discussions related to this paper.   In addition we thank Tasho Kaletha and Loren Spice for discussions on Lie theory.    A.G. would like to thank the University of Michigan for hospitality
during  visits while  work on this project proceeded.
R.S. thanks the University of Bristol for hospitality and support during this work.   Both authors also  
benefitted from the   special semester on Dynamics on Moduli Spaces of Geometric Structures at MSRI, and thank the Institute for its support and hospitality. 

%%%%%%%%%%%%%%%%%%%%%%%%%%%%%%%%%%%%%%%%%%%%%%%%%%%%%%%%%%%%%%%%%%%%%%%%%%%%%%%%%%%%%%%%%%%%%%%%%%%%%%%%%%%%%%%
%%%%%%%%%%%%%%%%%%%%%%%%%%%%%%%%%%%%%%%%%%%%%%%%%%%%%%%%%%%%%%%%%%%%%%%%%%%%%%%%%%%%%%%%%%%%%%%%%%%%%%%%%%%%%%%

\section{Actions on flag manifolds}

We start by recalling the definition of flag manifolds
and discuss basic properties of the standard actions on the flag manifolds.
We also discuss properties of dynamics for actions on projective spaces and existence/uniqueness of sinks. Throughout this section, $G$ is a connected semisimple Lie group without any assumptions on its rank.

\subsection{Flag manifolds}\label{sec:flag}
We fix a Cartan involution $\theta$ of $G$.
It determines the maximal compact subgroup $K$ of $G$ and 
the Cartan decomposition
\begin{equation}
\label{eq:cartan}
\mathfrak{g}=\hbox{Lie}(K)\oplus \mathfrak{p}
\end{equation}
of the Lie algebra $\mathfrak{g}$ of $G$. Let $\mathfrak{a}\subset \mathfrak{p}$ be a Cartan subalgebra (i.e., a maximal abelian subalgebra of $\mathfrak{p}$).
A non-zero linear form $\alpha\in \mathfrak{a}^*$ is a (restricted) root if the corresponding root space 
$$
\mathfrak{g}_\alpha=\{x\in \mathfrak{g}:\, [a,x]=\alpha(a)x\quad \hbox{for $a\in\mathfrak{a}$}\}
$$
is non-zero. We denote by $\Phi\subset \mathfrak{a}^*$ the set of (restricted) roots. Then
$$
\mathfrak{g}=\mathfrak{g}_0\oplus \sum_{\alpha\in\Phi} \mathfrak{g}_\alpha\quad\quad
\hbox{and}\quad\quad
\mathfrak{g}_0=\mathfrak{m}\oplus \mathfrak{a},
$$ 
where $\mathfrak{m}=\hbox{Lie}(K)\cap \mathfrak{g}_0$. 
We fix a set $\Delta \subset \Phi$ of simple roots and denote by 
$\Phi^+$ the corresponding subset of positive roots. 

We introduce the set of the standard parabolic subgroups $P_I$ of $G$ which are associated to subsets $I\subset \Delta$.
We denote by $\Phi^I\subset \Phi$ the subset of roots that are linear combinations of elements from $I$.
The standard parabolic subalgebra is defined by
$$
\mathfrak{p}_I=\mathfrak{m}_I+ \mathfrak{a}+ \mathfrak{n}_I,
$$
where 
$$
\mathfrak{m}_I=\mathfrak{m}+\mathfrak{a}+\sum_{\alpha\in \Phi^I} \mathfrak{g}_\alpha \quad
\quad\hbox{and}\quad\quad 
\mathfrak{n}_I=\sum_{\alpha\in \Phi^+\backslash \Phi^I}\mathfrak{g}_\alpha.
$$
We also set	
$$
\mathfrak{n}^-_I=\sum_{\alpha\in \Phi^-\backslash \Phi^I}\mathfrak{g}_\alpha.
$$
Then 
$$
\mathfrak{g}=\mathfrak{n}^-_I\oplus \mathfrak{p}_I.
$$
The standard parabolic subgroup $P_I$ is defined as the normaliser of $\mathfrak{p}_I$ in $\mathfrak{g}$.
A general parabolic subgroup is a subgroup of $G$ which is conjugate of one of the standard parabolic subgroups $P_I$. The flag manifolds are the homogeneous spaces 
$$
F_I=G/P_I.
$$
Let 
$$
N_I^-=\exp(\mathfrak{n}^-_I).
$$
Then it follows from the Bruhat decomposition that
$$
U_I=N_I^-P
$$ is an open dense subset of $F_I$, and moreover, the complement of 
$N_I^-P$ in $F_I$ is a finite union of analytic submanifolds of lower dimensions.

Let us describe how typical elements $g\in G$ act on the flag manifolds $F_I$.
Every $g\in G$ can be written uniquely as a commuting product 
\begin{equation}
\label{eq:g1}
g=g_cg_{nc}g_u,
\end{equation}
where $\hbox{Ad}(g_c)$ is semisimple and has all eigenvalues of modulus one, 
$\hbox{Ad}(g_{nc})$ is semisimple and has all eigenvalues real and positive, and $\hbox{Ad}(g_u)$ is unipotent.
Moreover, after taking a conjugation of $g$, we may assume that 
$g_{nc}=a\in \exp(\mathfrak{a}^+)$,
where 
$$
\mathfrak{a}^+=\{a\in \mathfrak{a}:\, \alpha(a)\ge 0\quad\hbox{for all $\alpha\in \Delta$}\}
$$
is the positive Weyl chamber in $\mathfrak{a}$.
Then the action of $g$ on the open cell $U_I\subset F_I$ can be described as follows: 
\begin{equation}
\label{eq:action}
g\cdot \exp(x)P_I=\exp\left(\sum_{\alpha\in \Phi^-\backslash \Phi^I} e^{\alpha(\log(a))}\hbox{Ad} (g_cg_u)x_\alpha\right)P_I \quad
\hbox{for $x=\sum_{\alpha\in \Phi^-\backslash \Phi^I}x_\alpha\in \mathfrak{n}^-_I.$}
\end{equation}
We recall that the element $g$ is called \emph{$\mathbb{R}$-regular} if the number of eigenvalues, counted with multiplicity, of $\hbox{Ad}(g)$ is minimal possible.
This condition is equivalent to $a$ being in the interior of $\mathfrak{a}^+$. 
We also use the following more general notion of regularity.

\begin{definition}
{\rm
	For $J\subset\Delta$, an element $g\in G$ is called \emph{$J$-regular} if
	it is of the form \eqref{eq:g1} with $g_{nc}$ being conjugated to $a\in \exp(\mathfrak{a}^+)$ such that $\alpha(\log(a))>0$ for all $\alpha\in J$. 
	In particular,
	$\Delta$-regular elements are precisely the $\R$-regular elements.
}
\end{definition}

Now suppose that the element  $g$ in \eqref{eq:action} is  $(\Delta\backslash I)$-regular.
Then $\alpha(\log (a))<0$ for all $\alpha\in \Phi^-\backslash \Phi^I$.
We observe that the map $\hbox{Ad} (g_cg_u)$ preserves the root spaces and has eigenvalues of absolute value one, and  $\|\hbox{Ad} (g_cg_u)^n\|$ grows at most polynomially as $n\to\infty$.
Hence, it follows that the identity coset $eP_I$
is a differentiable sink for $g$, and for every $z\in U_I$, $g^n z\to eP_I$ as $n\to \infty$. Since $U_I$ is dense, it also clear that this sink is unique.
We denote by $s_g\in F_I$ the sink of such element $g$.

The above discussion shows that every $(\Delta\backslash I)$-regular element has a sink in $F_I$. As we shall see in the next section, the converse is also true: 
if an element has a topological sink in $F_I$, then it is $(\Delta\backslash I)$-regular.

\subsection{Dynamics on projective spaces}\label{sec:proj}
It is convenient to study the action of $G$ on the flag manifolds $F_I$ by embedding $F_I$ 
in a product of suitable projective spaces. We consider the $G$-equivariant embedding
\begin{equation}
\label{eq:embed}
\iota_I:F_I\to \prod_{\alpha\in \Delta\backslash I} \mathbb{P}(V_\alpha): gP_I\mapsto (\sigma_\alpha(g)v_\alpha:\, \alpha\in \Delta\backslash I),
\end{equation}
introduced in \cite[Sec.~3]{Benoist}, which is defined using suitable irreducible representations 
$$
\sigma_\alpha:G\to \hbox{GL}(V_\alpha),\quad \alpha\in \Delta,
$$
and the highest weight vectors $v_\alpha\in V_\alpha$.
These representations have the property that the transformation $\sigma_\alpha(g)$ is proximal 
if and only if $g$ is $\{\alpha\}$-regular (see \cite[Sec.~2.5]{Benoist}).
We recall that a linear transformation is called \emph{proximal} if 
it has a unique eigenvalue of maximal modulus, and this eigenvalue has multiplicity one.

\begin{proposition}\label{p:sink}
Suppose that the action of an element $g\in G$ on the flag manifold $F_I$ has a topological sink. 
Then $g$ is $(\Delta\backslash I)$-regular. In particular, every topological sink
for the standard action on $F_I$ is also a differentiable sink, and this sink is unique.
\end{proposition}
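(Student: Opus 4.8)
The plan is to use the $G$-equivariant embedding $\iota_I$ of \eqref{eq:embed} to reduce the claim to a statement about a single linear transformation on a projective space, and then to analyse that projective dynamics through the Jordan decomposition. First I would reduce to one projective factor. Let $p\in F_I$ be a topological sink for $g$, with attracting neighbourhood $W_0$, and fix $\alpha\in\Delta\setminus I$. Since $P_I\subset P_{\Delta\setminus\{\alpha\}}$, the canonical projection $\pi_\alpha\colon F_I\to X_\alpha:=G/P_{\Delta\setminus\{\alpha\}}$ is a $G$-equivariant fibre bundle, in particular open. Then $\pi_\alpha(p)$ is a topological sink for $g$ on $X_\alpha$, with attracting neighbourhood $\pi_\alpha(W_0)$: this set is open, and for any neighbourhood $W'$ of $\pi_\alpha(p)$ the preimage $\pi_\alpha^{-1}(W')$ is a neighbourhood of $p$, so $g^n(W_0)\subset\pi_\alpha^{-1}(W')$ for all large $n$ and hence $g^n(\pi_\alpha(W_0))\subset W'$. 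Composing $\pi_\alpha$ with the $\alpha$-th factor of $\iota_I$ identifies $X_\alpha$ with the closed $G$-orbit of the highest weight line $[v_\alpha]$ inside $\mathbb{P}(V_\alpha)$; this is an irreducible variety, and since $\sigma_\alpha$ is irreducible it spans $\mathbb{P}(V_\alpha)$. Thus $T:=\sigma_\alpha(g)$ has a topological sink on the spanning irreducible subvariety $Y:=X_\alpha\subset\mathbb{P}(V_\alpha)$. By the quoted property of $\sigma_\alpha$ it suffices to show that this forces $T$ to be proximal; carrying this out for every $\alpha\in\Delta\setminus I$ then makes $g$ simultaneously $\{\alpha\}$-regular for all $\alpha\in\Delta\setminus I$, which is exactly $(\Delta\setminus I)$-regularity.

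For the core linear-algebra statement, let $T\in\GL(V)$ ($V$ real) have a topological sink $[v]$ on a $T$-invariant irreducible subvariety $Y\subset\mathbb{P}(V)$ spanning $\mathbb{P}(V)$; the goal is that $T$ be proximal. Since $[v]$ is fixed, $Tv=\mu v$ with $\mu\in\R^\times$. Let $m$ be the largest modulus of an eigenvalue of $T$, and write $V=W_1\oplus W'$, where $W_1$ is the sum of the generalised eigenspaces for eigenvalues of modulus $m$ and $W'$ the sum of the remaining ones. I would first show $v\in W_1$. If not, $[v]\notin\mathbb{P}(W_1)$ and $\mathbb{P}(W')$ is a proper projective subspace; since $Y$ is irreducible, the nonempty open set $W_0\cap Y$ is dense in $Y$, hence not contained in $\mathbb{P}(W')$, so we may choose $[w]\in W_0\cap Y$ whose $W_1$-component is nonzero. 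Then $\|\mathrm{pr}_{W_1}(T^nw)\|\asymp m^nn^{c}$ dominates $\|\mathrm{pr}_{W'}(T^nw)\|=O((m')^nn^{c'})$ with $m'<m$, so $\dist([T^nw],\mathbb{P}(W_1))\to0$; but $[w]\in W_0$ forces $[T^nw]\to[v]$, whence $[v]\in\mathbb{P}(W_1)$ --- a contradiction. So $v\in W_1$.

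It remains to show $\dim W_1=1$. After rescaling $T$ by $m^{-1}$, and replacing $T$ by $T^2$ if necessary, we may assume $Tv=v$ and that all eigenvalues of $T$ have modulus $\le1$, so that $W_1$ is the sum of the modulus-one generalised eigenspaces. Suppose $\dim W_1\ge2$. A topological sink on $Y$ is in particular Lyapunov stable on $Y$. Pass to a subsequence along which $T^{n_k}/\|T^{n_k}\|\to S\ne0$ in $\End(V)$; since $\|T^n|_{W'}\|\to0$ one has $\mathrm{im}(S)\subset W_1$. If $T|_{W_1}$ is semisimple it generates a relatively compact group, and the $n_k$ may be chosen with $T^{n_k}|_{W_1}\to\mathrm{Id}$, so $S$ is a nonzero multiple of the projection onto $W_1$; then every $[w]\in W_0\cap Y$ with $\mathrm{pr}_{W_1}(w)\ne0$ satisfies $[\mathrm{pr}_{W_1}(w)]=\lim_k[T^{n_k}w]=[v]$, so by density $Y\subset\mathbb{P}(\R v\oplus W')$, a proper subspace --- contradicting that $Y$ spans. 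If $T|_{W_1}$ is not semisimple, let $N$ be the logarithm of its unipotent part, of nilpotency index $d\ge1$; then $\mathrm{im}(S)\subset\mathrm{im}(N^d)\subsetneq W_1$. When $\mathrm{rank}(S)\ge2$, the same limit argument confines $W_0\cap Y$ to a union of two proper subspaces ($\mathbb{P}(S^{-1}(\R v))$ and $\mathbb{P}(\mathrm{ker}\,S)$), again contradicting irreducibility and spanning. When $\mathrm{rank}(S)=1$ --- equivalently, $W_1$ has a unique Jordan block of maximal size $d+1$ --- one argues instead via the ``parabolic drift'': points $[v+\varepsilon u]\in Y$, with $u$ the top of the corresponding length-$(d+1)$ Jordan chain, have orbits which at time $n\asymp\varepsilon^{-1/d}$ lie a definite distance from $[v]$ before eventually returning --- exactly the obstruction to Lyapunov stability already present for $\left( \begin{smallmatrix} 1 & 1 \\ 0 & 1 \end{smallmatrix} \right)$ acting on $\mathbb{P}^1$ --- so $[v]$ cannot be a topological sink. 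In all cases $\dim W_1=1$, and $T$ is proximal.

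Granting the above, $g$ is $(\Delta\setminus I)$-regular, and the remaining assertions follow from Section~\ref{sec:flag}: after conjugation such a $g$ has $eP_I$ as a differentiable sink with $g^nz\to eP_I$ for all $z$ in the dense cell $U_I$, so the sink is unique; and any topological sink of $g$ must coincide with it, since its attracting neighbourhood is open, hence meets $U_I$, and no point of $U_I$ can converge to two distinct limits. I expect the main obstacle to be the non-semisimple, rank-one case above: one must exclude that the nonlinear part of the algebraic map $T|_Y$ near $[v]$ conspires to turn a neutral Jordan direction into a genuine sink. This is where the geometry of $X_\alpha$ enters --- being the closed $G$-orbit of a highest weight line, $X_\alpha$ is linearly nondegenerate (cut out by quadrics) --- together with the shape of the Jordan decomposition: a nontrivial Jordan block in $W_1$ forces the unipotent part $g_u$ of $g$ to be nontrivial, and the top $u$ of the maximal chain then spans a direction tangent to $X_\alpha$, so that the drift is realised along curves genuinely contained in $Y$ rather than merely in $\mathbb{P}(V_\alpha)$.
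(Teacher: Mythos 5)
Your reduction to proximality of $\sigma_\alpha(g)$ on the closed orbit $Y=\iota_\alpha(F_I)$, the step showing the sink direction lies in the top-modulus subspace $W_1$, and your treatment of the cases where $T|_{W_1}$ is semisimple or where the limit operator $S$ has rank at least two are sound, and they run parallel to the paper's argument (there the elliptic part is removed by passing to a subsequence with $k^{-n_i}s_\alpha\to s_\alpha$, and the rank considerations are replaced by projecting to the top Jordan subspace and invoking irreducibility of $\sigma_\alpha$). The genuine gap is the case you yourself flag at the end: rank-one limit with a nontrivial nilpotent part, i.e.\ the sink direction $v$ spanning the image of $N^{d}$. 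Your limit-operator argument yields nothing there, and the ``parabolic drift'' sketch is not a proof: it presumes that points of the form $[v+\varepsilon u]$, with $u$ at the far end of a maximal Jordan chain, actually lie on $Y$, or at least that this chain direction is tangent to $Y$ at $[v]$ --- and neither assertion is justified. Since $Y$ is a proper subvariety and the basin $W_0$ is a neighbourhood of $[v]$ only inside $Y$, the drift must be exhibited along points of $Y$, not of $\mathbb{P}(V_\alpha)$; the remark that $X_\alpha$ is cut out by quadrics does not supply this, and even granted tangency you would still need to check that the excursion estimate survives the higher-order terms of a curve in $Y$ and produces, for arbitrarily large $n$, points of $W_0\cap Y$ thrown a definite distance from $[v]$.

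This missing case is precisely what the paper's Lemma \ref{l:unipotent} is designed for, and its mechanism is the idea absent from your proposal. What analyticity and spanning actually give you is a point $w$ in the basin with nonzero component at the bottom of the chain, i.e.\ $X^{\ell}w\neq 0$ for the nilpotent generator $X$ of the one-parameter group through the unipotent part (applied, as in the paper, after projecting $Y$ to the top Jordan subspace, where the transformation acts projectively as the unipotent alone). Then $u_tw\to[X^{\ell}w]=s$ both as $t\to+\infty$ and as $t\to-\infty$, so $u_{-n_i}w$ lies in the basin for large $i$, and the identity $u_{n_i}u_{-n_i}w=w$ together with the sink property forces $w=s$; but $Xs=0$ while $X^\ell w\neq0$, a contradiction, so the unipotent part is trivial on that subspace. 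Replacing your drift heuristic by this two-sided-limit argument closes the proof; as written, the decisive case is asserted rather than proved.
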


\begin{proof}
We prove the proposition by considering the action on $\iota_I(F_I)$.
If the action of $(\sigma_\alpha(g))_{\alpha\in\Delta\backslash I}$ on $\iota_I(F_I)$ 
has a topological sink, then the actions of $\sigma_\alpha(g)$ on $\iota_\alpha (F_I)$, $\alpha\in\Delta\backslash I$,
also have topological sinks. We will show that then $\sigma_\alpha(g)$'s are proximal. In view of the above remark, this will imply the proposition.
	
We write the transformation $\sigma_\alpha(g)$ as a commuting product 
$$
\sigma_\alpha(g)=kau,
$$
where $k$ is semisimple with eigenvalues of absolute value one,  $a$ is semisimple with real positive eigenvalues, and $u$ is unipotent.
Let $s_\alpha\in \iota_\alpha (F_I)$ be a topological sink of $\sigma_\alpha(g)$.
We fix a $\sigma_\alpha(K)$-invariant metric on $\mathbb{P}(V_\alpha)$
and denote by $B_{\epsilon}(s_\alpha)$ the $\epsilon$-ball centred at $s_\alpha$ in $\iota_\alpha (F_I)$.
Then since $s_\alpha$ is a sink, there exists $\epsilon_0>0$ such that for every $\epsilon>0$ and all sufficiently large $n$,
we have 
$$
g^n(B_{\epsilon_0}(s_\alpha))\subset B_{\epsilon}(s_\alpha),
$$
and hence,
$$
(au)^n(B_{\epsilon_0}(s_\alpha))\subset B_{\epsilon}(k^{-n}s_\alpha).
$$
Passing to subsequence, we may assume that $k^{-n_i}s_\alpha\to s_\alpha$. 
Hence, it follows that for every $\epsilon>0$ and all sufficiently large $i$, we have 
\begin{equation}
\label{e:sink}
(au)^{n_i}(B_{\epsilon_0}(s_\alpha))\subset B_{2\epsilon}(s_\alpha).
\end{equation}
In this case, we say that $s_\alpha$ is a topological sink for the sequence of transformations
$(au)^{n_i}$.

%We pass from $V_\alpha$ to the smallest $\sigma_\alpha(g)$-invariant subspace which contains $s_\alpha$.

The transformation $au$ has real positive eigenvalues.
Let $\lambda$ be the maximal eigenvalue of this transformation, and let $V_\alpha(\lambda)$ be the corresponding Jordan subspace.
We denote by $\pi_\lambda:V_\alpha\to V_\alpha(\lambda)$ the projection
map defined by the Jordan decomposition of $au$. 
We claim that $\pi_\lambda(s_\alpha)\ne 0$. Indeed, suppose that $\pi_\lambda(s_\alpha)= 0$.
It follows from \eqref{e:sink} that
for sufficiently small neighbourhood $\mathcal{O}$ of identity in $G$ and all $v\in \sigma_\alpha(\mathcal{O})s_\alpha$, we have 
$$
(au)^{n_i}v\to s_\alpha.
$$ 
On the other hand, if $\pi_\lambda(v)\ne 0$, then since $\lambda$ is maximal,
$(au)^{n_i}v$ must converge to a point
in $\mathbb{P}(V_\alpha(\lambda))$ which contradicts our assumption that $\pi_\lambda(s_\alpha)= 0$. Hence, we conclude that 
$$
\sigma_\alpha(\mathcal{O})s_\alpha\subset \hbox{ker}(\pi_\lambda).
$$
Since $\iota_\alpha(F_I)=\sigma_\alpha(G)s_\alpha$ is an analytic submanifold of $\mathbb{P}(V_\alpha)$, this implies that
$$
\iota_\alpha(F_I)\subset \hbox{ker}(\pi_\lambda).
$$
However, this contradicts irreducibility of the representation $\sigma_\alpha$.
Hence, we conclude that $\pi_\lambda(s_\alpha)\ne 0$.

The transformation $au$  acts on $\mathbb{P}(V_\alpha(\lambda))$ as $u$.
We can write $u=\exp(X)$ for some nilpotent $X\in\hbox{End}(V_\alpha)$
preserving the Jordan decomposition. Then $u$ is contained in a unipotent one-parameter subgroup
$U=\{\exp(tX)\}_{t\in\R}$
of $\sigma_\alpha(G)$.
The projection map $\pi_\lambda:V_\alpha\to V_\alpha(\lambda)$
is equivariant with respect to the action of $U$.
We consider the action of $U$ on $S=\pi_\lambda(\iota_\alpha(F_I))$.
Then the point $s=\pi_\lambda(s_\alpha)$ is a topological sink for the sequence $u^{n_i}$.
It follows from Lemma \ref{l:unipotent} below that $S=\{s\}$.
If $\dim(V_\alpha(\lambda))>1$, then it would follow that 
$\iota_\alpha(F_I)$ is contained in a proper subspace of $V_\alpha$,
but this contradicts irreducibility of the representation $\sigma_\alpha$.
Hence, we conclude that $\dim(V_\alpha(\lambda))=1$, and $\sigma_\alpha(g)$ is proximal. This
implies that $g$ is $\alpha$-regular for all $\alpha\in\Delta\backslash I$, and completes the proof.
\end{proof}

The following lemma was used in the proof of the previous proposition.

\begin{lemma}\label{l:unipotent}
	Let $U=\{u_t\}_{t\in\R}$ be a one-parameter unipotent group of linear transformations of a vector space $V$, $S\subset \mathbb{P}(V)$
	a $U$-invariant connected analytic submanifold, and $s\in S$ a topological sink for a sequence $u_{n_i}$ satisfying $n_i\to\infty$. Then $S=\{s\}$.
\end{lemma}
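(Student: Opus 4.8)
The plan is to exploit the fact that a one-parameter unipotent group has polynomially bounded orbits, so that no proper dynamics can contract a positive-dimensional piece. First I would pass to coordinates adapted to the unipotent flow: write $u_t=\exp(tX)$ with $X$ nilpotent, so that in affine coordinates on a chart of $\mathbb{P}(V)$ near $s$ the action of $u_t$ is given by polynomial maps whose degree is bounded by the nilpotency degree of $X$. The key quantitative point is that for a vector $v$ with nonzero projection to the "highest" Jordan block reached by $v$, the quantity $\|u_n v\|/\|v\|$ grows like $n^d$ for some $d\ge 0$, and the corresponding point in $\mathbb{P}(V)$ converges, as $n\to\infty$, into the projectivization of the image of $X$ restricted to that block — i.e. into a fixed lower-dimensional subvariety $Y=\mathbb{P}(\operatorname{im} X)$ (more precisely the set of limit points of unipotent orbits lies in the fixed-point set of $U$, hence in $\mathbb{P}(\ker X)$, and every orbit limit point is a $U$-fixed point).

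Next I would argue by contradiction: suppose $\dim S\ge 1$. Since $s$ is a topological sink for the subsequence $u_{n_i}$, there is a fixed neighbourhood $W_0$ of $s$ in $S$ with $u_{n_i}(W_0)$ eventually inside any prescribed neighbourhood of $s$. Because $S$ is analytic and connected of positive dimension, $W_0$ is not a single point; pick a point $w\in W_0$, $w\ne s$, lying on a curve in $S$. The contraction forces $u_{n_i}w\to s$. Now I would use the structure of unipotent dynamics on $\mathbb{P}(V)$: the accumulation points of $\{u_n w\}_n$ must be $U$-fixed, so $s\in\mathbb{P}(V)^U$. More importantly, the map $w\mapsto \lim_i u_{n_i}w$ (where it exists) factors through a "leading term" map; on an open dense subset of any affine chart this limit is constant along the fibres of a fixed algebraic map and takes values in a subvariety of dimension strictly less than that of the chart. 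The cleanest way to push this through is: since $s$ is a sink, the whole of $W_0$ is attracted to $s$, so the image $u_{n_i}(W_0)$ shrinks to $\{s\}$; but $u_{n_i}$ is a diffeomorphism of $\mathbb{P}(V)$ for each $i$, so $u_{n_i}(W_0)$ has the same dimension as $W_0$. If $W_0$ (hence $S$) has positive dimension this forces the limiting "point" $s$ to be approached by sets of positive dimension, which is fine topologically — so I instead extract a contradiction from uniform contraction of the differential along $S$, as follows.

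The decisive step is a derivative estimate. Since $s$ is a topological sink for $u_{n_i}$ in $S$, after choosing a Riemannian metric on $S$ one shows that $\|D(u_{n_i})_x\|\to 0$ uniformly for $x$ in a fixed neighbourhood of $s$ in $S$: indeed iterating the sink condition gives that $u_{n_i}^k(W_0)$ eventually lies in arbitrarily small balls for each fixed $i$ large, and combined with the semigroup property $u_{n_i}u_{n_j}=u_{n_i+n_j}$ one gets genuine exponential-type contraction of lengths of curves in $S$ under $u_{n_i}$ for $i$ large. On the other hand, $u_{n_i}$ acts on $\mathbb{P}(V)$ by projective transformations whose derivatives are controlled polynomially: $\|D(u_n)_x\|\le C(1+n)^{2d}$ and, crucially, $\|D(u_n)_x\|$ cannot tend to $0$ along any subsequence at a point where the orbit stays in a chart, because $u_n$ has determinant of constant sign and its restriction to the relevant Jordan block has bounded-below "leading coefficient". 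This contradicts the contraction just established, unless $S$ has no curves through $s$, i.e. $\dim S=0$; connectedness of $S$ then gives $S=\{s\}$.

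The main obstacle I expect is making the derivative lower bound precise and uniform: one must rule out that $D(u_{n_i})_x\to 0$ in the directions tangent to $S$ while the ambient derivative of $u_{n_i}$ on $\mathbb{P}(V)$ degenerates (projective transformations can contract generically while expanding elsewhere). The way around this is to use that $S$ is $U$-\emph{invariant}: the vector field generating $U$ is tangent to $S$, so the flow $u_t$ maps $S$ to $S$ for all real $t$, and on $S$ the time-$t$ map is a polynomial diffeomorphism of a fixed analytic variety; a proper polynomial diffeomorphism of a positive-dimensional connected analytic variety cannot have all of $S$ attracted to one point under a subsequence $u_{n_i}$ with $n_i\to\infty$, because that would force the forward $U$-orbit of a generic point to have relatively compact closure with a single limit point, contradicting that nonconstant polynomial curves $t\mapsto u_t w$ are proper (unbounded) unless $w$ is $U$-fixed, i.e. unless $w=s$. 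Spelling out this properness argument on $S$ — reducing to the one-variable case via a curve in $S$ through $s$ and analysing the top-degree term of the polynomial $t\mapsto u_t w$ in a projective chart — is where the real work lies, but it is elementary.
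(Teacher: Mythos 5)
Your ``decisive step'' --- deducing that $\|D(u_{n_i})_x\|\to 0$ uniformly near $s$ from the topological sink condition --- does not hold, and this is where the proposal breaks. A topological sink gives no control on derivatives: the hypothesis $u_{n_i}(W_0)\subset W$ only says that images of a fixed neighbourhood shrink, and a map can shrink a neighbourhood onto a point while having derivative of norm $\ge 1$ there (compare the time-$t$ maps of $\dot x=-x^3$ on $\R$, for which $u_t(W_0)$ shrinks to $\{0\}$ uniformly yet $u_t'(0)=1$ for all $t$). In the present setting the obstruction is structural: $s$ is necessarily a $U$-fixed point, and the derivative of a unipotent projective transformation at a fixed point is itself unipotent, so $\|D(u_{n_i})_s\|\ge 1$ for all $i$; no contraction of the differential can be extracted from the hypothesis, which is exactly the paper's point in distinguishing topological from differentiable sinks. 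Your fallback ``properness'' argument fails for a complementary reason: the orbit $t\mapsto u_tw$ is proper as a curve in $V$ but not in $\mathbb{P}(V)$ --- in projective space it converges, as $t\to+\infty$ \emph{and} as $t\to-\infty$, to the same point $[X^{\ell}w]$, where $\ell$ is maximal with $X^{\ell}w\ne 0$; so ``relatively compact closure with a single limit point'' is automatic for unipotent orbits and yields no contradiction.

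The missing idea is precisely this two-sided convergence combined with the group law. The paper's proof runs: choose $w\in W_0$ with $X^{\ell}w\ne 0$ (possible because $W_0$ spans $V$ by analyticity of $S$); the sink condition applied to $w$ forces $s=[X^{\ell}w]$; since $u_tw\to[X^{\ell}w]=s$ also as $t\to-\infty$, one has $u_{-n_i}w\in W_0$ for all large $i$; then $w=u_{n_i}(u_{-n_i}w)$ lies in every neighbourhood $W$ of $s$, hence $w=s$; but $X$ kills $[X^{\ell}w]=s$ while $Xw\ne 0$, a contradiction, so $X=0$ and the sink condition then forces $W_0=\{s\}$ and $S=\{s\}$. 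Your write-up contains the forward limit and invokes the semigroup property, but never the backward-time substitution $t=-n_i$ that closes the argument, and neither of the two contradictions you propose in its place is available.
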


\begin{proof}
Since $s$ is a topological sink for the sequence $u_{n_i}$, there exists a neighbourhood $W_0$ of $s$ in $S$
such that for every neighbourhood $W$ of $s$ in $S$ and all sufficiently large $i$,
\begin{equation}
\label{eq:connt}
u_{n_i}(W_0)\subset W.
\end{equation}
Without loss of generality, we may assume that $V=\left<S\right>$.
Moreover, since $S$ is an analytic submanifold, it follows that $V=\left<W_0\right>$.
We write $u_t=\exp(tX)$ for a nilpotent transformation $X\in\hbox{End}(V)$.
We suppose that $X\ne 0$ and take $\ell\ge 1$ such that $X^\ell\ne 0$ and $X^{\ell+1}= 0$.
Then 
$$
u_t=\sum_{j=0}^\ell \frac{t^jX^j}{j!}.
$$
There exists $w\in W_0$ such that $X^\ell w\ne 0$.
Then 
$$
u_tw\to [X^\ell w]\quad\hbox{in $\mathbb{P}(V)$ as $t\to -\infty$ and as $t\to +\infty$.}
$$
In particular, it follows that 
\begin{equation}
\label{eq:s}
s=[X^\ell w].
\end{equation}
It also follows that there exists $t_0\in \R$ such that for all $t<t_0$,
we have 
$$
u_tw\in W_0.
$$
Then we deduce from \eqref{eq:connt} that
for every neighbourhood $W$ of $s$ in $S$ and all sufficiently large $i$,
$$
u_{n_i}\cdot u_tw=u_{n_i+t} w\in W.
$$
Then taking $t=-n_i$, we conclude that $w=s$.
However, it follows from \eqref{eq:s} that $Xs=0$, so that $w\ne s$.
This contradiction implies that $X=0$ (that is, $U$ is trivial), and since $s$ is a sink, $S=\{s\}$.
\end{proof}

\section{Existence of many sinks}
\label{sec:many sinks}

In this section we establish abundance of differentiable sinks
for arbitrary smooth actions which are $C^0$-conjugate to the standard actions on flag manifolds. We only require existence of a single differentiable sink.

The following proposition will play a central role in the proof of our main result.
It might have other applications, and we emphasize that this result is applicable to any Zariski dense subgroup of a semisimple group without any rank assumptions. 

\begin{proposition}\label{manysinks}
Let $G$ be a semisimple real algebraic group, $F_I=G/P_I$ a flag manifold, and 
$\Gamma \subset G$ a Zariski dense subgroup of $G$.  We denote by $\rho_0$ the standard action of $\Gamma$ on $F_I$. Let $\rho$ be a $C^1$-action of $\Gamma$  on a manifold $M$,
and $\phi: F_I \to M$ is a $C^0$-conjugacy intertwining the actions $\rho _0$ and $\rho$.  
Suppose that there exists $\gamma_0\in \Gamma$ such that $\rho(\gamma_0)$ has a differentiable sink in $M$. Then there exists a Zariski dense subsemigroup $S \subset \Gamma$ such that  $\rho (\gamma)$  has a differentiable sink in $M$ for every $\gamma \in S$. 
\end{proposition}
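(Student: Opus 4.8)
The plan is to bootstrap from the single given differentiable sink, using the exponentially contracting dynamics of the standard action near a sink, to manufacture differentiable sinks for an entire Zariski dense \emph{semigroup}. Write $\phi$ for the given $C^0$-conjugacy, so $\rho(\gamma)=\phi\circ\rho_0(\gamma)\circ\phi^{-1}$ for all $\gamma\in\Gamma$. First I would note that the differentiable sink $m_0$ of $\rho(\gamma_0)$ is in particular a topological sink, hence $s_{\gamma_0}:=\phi^{-1}(m_0)$ is a topological sink of $\rho_0(\gamma_0)$ on $F_I$; by Proposition~\ref{p:sink}, $\gamma_0$ is $(\Delta\setminus I)$-regular and $s_{\gamma_0}$ is its unique (differentiable) sink. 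Applying \eqref{eq:action} to a conjugate of $\gamma_0$, there is an open dense $U\subset F_I$, with $s_{\gamma_0}\in U$ and complement a proper subvariety (a translate of the big Bruhat cell $N_I^-P_I$), on compact subsets of which $\rho_0(\gamma_0)^n\to s_{\gamma_0}$ uniformly. Transporting by $\phi$, the set $B:=\phi(U)$ is open and dense in $M$ and $\rho(\gamma_0)^n\to m_0$ uniformly on compact subsets of $B$. I then fix a Riemannian metric near $m_0$ adapted to $\rho(\gamma_0)$: a geodesically convex ball $V\ni m_0$ with $\rho(\gamma_0)(\overline V)\subset V$ and $\|D\rho(\gamma_0)_x\|\le\lambda<1$ on $V$, together with a smaller geodesically convex ball $V_1\subset V$ around $m_0$.

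The heart of the argument is a \emph{uniform sink-creation} estimate for the ``sandwiched'' elements $\gamma_0^{N}\gamma\gamma_0^{N}$. Put $T:=\{\gamma\in\Gamma:\gamma\cdot s_{\gamma_0}\in U\}$, equivalently $\rho(\gamma)(m_0)\in B$. For $\gamma\in T$, the map $\rho(\gamma_0)^N$ contracts $V_1\subset V$ into an arbitrarily small ball about $m_0$; then $\rho(\gamma)$ carries that into an arbitrarily small ball about $\rho(\gamma)(m_0)\in B$, which for $N$ large lies inside $B$; and then $\rho(\gamma_0)^N$ again, using the uniform convergence on compacts of $B$, contracts that into $\tfrac{1}{10}V_1$. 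Hence there is $N(\gamma)$ so that for all $N\ge N(\gamma)$ the element $\eta=\gamma_0^{N}\gamma\gamma_0^{N}\in\Gamma$ satisfies $\rho(\eta)(V_1)\subset\tfrac{1}{10}V_1$, and, differentiating the composite $\rho(\eta)=\rho(\gamma_0)^N\circ\rho(\gamma)\circ\rho(\gamma_0)^N$ along the orbit (the two $\rho(\gamma_0)^N$-blocks have operator norm $\le\lambda^{N}$ and $\le C(\gamma)\lambda^{N-N_0(\gamma)}$ on the relevant regions, while $\|D\rho(\gamma)\|$ near $m_0$ is bounded by a constant depending only on $\gamma$), one gets $\|D\rho(\eta)_x\|\le\tfrac{1}{100}$ for all $x\in V_1$ once $N$ is large. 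In particular $\rho(\eta)$ has a fixed point in $\tfrac{1}{10}V_1$ which is a differentiable sink.

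To assemble a semigroup I need the exponent uniform over a still Zariski dense family. Since $\Gamma$ is Zariski dense and $T=\Gamma\cap(c^{-1}N_I^-P_Ic)$ is $\Gamma$ intersected with a Zariski-open dense subset of $G$, the set $T$ is Zariski dense in $G$. The subsets $T_N:=\{\gamma\in T:N(\gamma)\le N\}$ increase with union $T$, so by Noetherianity of the Zariski topology on $G$ the closures $\overline{T_N}^{\mathrm{Zar}}$ stabilize and $\overline{T_{N^\ast}}^{\mathrm{Zar}}=G$ for some $N^\ast$; thus $S_0:=\gamma_0^{N^\ast}\,T_{N^\ast}\,\gamma_0^{N^\ast}$, being the image of the Zariski dense set $T_{N^\ast}$ under the variety automorphism $x\mapsto\gamma_0^{N^\ast}x\gamma_0^{N^\ast}$ of $G$, is Zariski dense, and by the previous step every $\eta\in S_0$ satisfies $\rho(\eta)(V_1)\subset\tfrac{1}{10}V_1$ and $\|D\rho(\eta)_x\|\le\tfrac{1}{100}$ on $V_1$. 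Let $S$ be the subsemigroup of $\Gamma$ generated by $S_0$; it contains the Zariski dense set $S_0$, so it is Zariski dense. Finally, for $\eta=\eta_1\cdots\eta_k$ with each $\eta_j\in S_0$, the composite $\rho(\eta)=\rho(\eta_1)\cdots\rho(\eta_k)$ maps $V_1$ into $\tfrac{1}{10}V_1\subset V_1$, so the nested compact sets $\rho(\eta)^n(\overline{V_1})$ shrink to a single fixed point $m_\eta\in V_1$ (each factor is $\tfrac{1}{100}$-Lipschitz on the convex ball $V_1$), while for every $x\in V_1$ the orbit segment $x,\rho(\eta_k)x,\rho(\eta_{k-1}\eta_k)x,\dots$ stays in $V_1$, giving $\|D\rho(\eta)_x\|\le(\tfrac{1}{100})^k<1$ on $V_1$. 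Thus $m_\eta$ is a differentiable sink of $\rho(\eta)$, and $S$ is the desired semigroup.

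The step I expect to be the main obstacle is precisely the passage to a semigroup: the set of $\gamma\in\Gamma$ for which $\rho(\gamma)$ has a differentiable sink is not obviously closed under multiplication, so the generators must be engineered so that \emph{products} of them are again contractions of one fixed ball with one fixed rate — this is why they are taken of the sandwiched form $\gamma_0^{N}\gamma\gamma_0^{N}$, which forces all sinks and all domains of contraction into the single ball $V_1$ at rate $\tfrac{1}{100}$. The accompanying subtlety is that the transient constants $C(\gamma),N_0(\gamma)$ and the bound on $\|D\rho(\gamma)\|$ near $m_0$ genuinely depend on $\gamma$ (through $\|D\rho(\gamma)\|$ being unbounded over $T$), so no single power $N$ works for all of $T$ at once; this is harmless for each individual $\gamma$ but is exactly what the Noetherian argument in the third paragraph is needed to circumvent while retaining Zariski density.
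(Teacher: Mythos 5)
Your local mechanism is fine: pulling the sink back through $\phi$, invoking Proposition~\ref{p:sink}, and showing that for each $\gamma$ with $\gamma s_{\gamma_0}$ in the basin the sandwiched element $\gamma_0^N\gamma\gamma_0^N$ (for $N\ge N(\gamma)$) maps a fixed ball $V_1$ into $\tfrac1{10}V_1$ with derivative bound $\tfrac1{100}$, so that arbitrary products of such elements are again contractions of $V_1$ and hence have differentiable sinks. The genuine gap is the step where you extract a single exponent $N^\ast$ valid on a Zariski dense set. Noetherianity of the Zariski topology controls \emph{descending} chains of closed sets; the chain $\overline{T_1}\subset\overline{T_2}\subset\cdots$ is \emph{ascending}, and ascending chains of closed subsets need not stabilize (take $Z_N$ a set of $N$ points in $\mathbb{A}^1$: the union $\mathbb{Z}_{>0}$ is dense but no $Z_N$ is). Since $\Gamma$ is countable in the intended application, $T=\bigcup_N T_N$ being Zariski dense gives no information about any single $T_N$, and there is no a priori bound on $N(\gamma)$ over a Zariski dense set: $N(\gamma)$ blows up as $\gamma s_{\gamma_0}$ approaches the complement of the basin and as the local norm of $D\rho(\gamma)$ grows. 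So the existence of $N^\ast$ with $\overline{T_{N^\ast}}=G$ is unjustified, and with it the Zariski density of your semigroup.

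The paper never needs such uniformity. Its generators are $\delta\gamma_0^n\delta^{-1}$ with $\delta$ ranging over Benoist's Zariski dense set of $(\Delta\backslash I,r,\epsilon)$-regular elements with sink near $s_{\gamma_0}$ (Lemma~\ref{benoist}) and with $n\ge n_0(\kappa,\delta)$ allowed to depend on $\delta$; Zariski density of the resulting semigroup $S$ is then proved purely algebraically: $\bar S$ is a group, it contains $\delta B\delta^{-1}$ for every such $\delta$, where $B$ is the connected component of the Zariski closure of $\langle\gamma_0^{n!}\rangle$ (a \emph{descending} chain, which does stabilize), hence $\bar S$ contains the normal subgroup generated by $B$, which is all of $G$ after arranging that the projections of $\gamma_0$ to the simple factors have infinite order. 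A repair of your argument in the same spirit is available without any uniform $N^\ast$: take $S$ generated by all $\gamma_0^N\gamma\gamma_0^N$ with $\gamma\in T$, $N\ge N(\gamma)$ (these still all contract the same ball $V_1$); the Zariski closure of $\{\gamma_0^N:N\ge N(\gamma)\}$ is the full closure of $\langle\gamma_0\rangle$, so $\bar S$ contains $\{b\gamma b:\ b\in \overline{\langle\gamma_0\rangle}\}\ni\gamma$ for every $\gamma\in T$, hence $\bar S\supset\overline{T}=G$. As written, however, the density step of your proposal does not work.
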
 

We will need a quantitative version of the proximal property discussed in Section \ref{sec:proj}.
Given a proximal linear transformation $g:V\to V$ of a vector space $V$,
we denote by $s_g\in \mathbb{P}(V)$ the direction corresponding to the maximal eigenvalue, and by $X_g^<\subset \mathbb{P}(V)$ the set of directions corresponding to the complementary $g$-invariant subspace. 
We fix standard metrics on the projective spaces $\mathbb{P}(V)$ and
set
\begin{align*}
b^\epsilon_g=\{x\in \mathbb{P}(V):\, d(x,s_g)\le \epsilon\}\quad\hbox{and}
\quad B^\epsilon_g=\{x\in \mathbb{P}(V):\, d(x,X_g^<)\ge \epsilon\}.
\end{align*}

\begin{definition}
	{\rm 
	We call a proximal transformation $g:V\to V$ \emph{$(r,\epsilon)$-proximal}
	if 
	$$
	d(s_g,X_g^<)\ge r,\quad g(B^\epsilon_g)\subset b^\epsilon_g,\quad 
	\hbox{ $g|_{B^\epsilon_g}$ is $\epsilon$-Lipschitz.}
	$$
	
	}
\end{definition}
This definition is slight variation of the notion introduced in \cite[Sec.~2-3]{Benoist}.
Adopting it to our setting, we say that 

\begin{definition}
	{\rm 
		 For $J\subset \Delta$, an element $g\in G$ is called \emph{$(J,r,\epsilon)$-regular}
		if the linear transformations $\sigma_\alpha(g)$  are \emph{$(r,\epsilon)$-proximal} for all $\alpha\in J$.
	}
\end{definition}

We use the following lemma \cite[3.6]{Benoist}. We fix standard metrics on the projective spaces $\mathbb{P}(V_\alpha)$ which also define a a metric on the flag manifolds $F_I$
via the embedding \eqref{eq:embed}.

\begin{lemma}[Benoist]   \label{benoist}
	Let $\gamma_0\in\Gamma$ be a $(\Delta\backslash I)$-regular element with the sink $s_{\gamma_0}\in F_I$.
	Then for every sufficiently small $r,\epsilon>0$, the set
	$$
	\mathcal{G}(\gamma_0,r,\epsilon)=\{\delta\in\Gamma:\, \hbox{$\delta$ is $(\Delta\backslash I,r,\epsilon)$-regular and }\; d(s_\delta, s_{\gamma_0})\le\epsilon\}
	$$
 is Zariski dense in $G$. 
\end{lemma}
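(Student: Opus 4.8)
This is Benoist's result \cite[3.6]{Benoist}, whose proof I would reconstruct as follows. First I would transport everything to the projective spaces through the embedding $\iota_I$ of \eqref{eq:embed}. We may assume $I\subsetneq\Delta$ (when $I=\Delta$ the manifold $F_I$ is a point and the statement is trivial). Since $\gamma_0$ is $(\Delta\backslash I)$-regular, each $\sigma_\alpha(\gamma_0)$ with $\alpha\in\Delta\backslash I$ is proximal; write $s^\alpha$ and $X^\alpha_<$ for its attracting direction and repelling hyperplane in $\mathbb{P}(V_\alpha)$, and set $r_0=\min_{\alpha}d(s^\alpha,X^\alpha_<)>0$. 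The first ingredient is the standard quantitative behaviour of a single proximal transformation under iteration (\cite[Sec.~2-3]{Benoist}): for every $\epsilon'>0$ a sufficiently high power $\sigma_\alpha(\gamma_0)^n$ carries the complement of the $\epsilon'$-neighbourhood of $X^\alpha_<$ into an arbitrarily small ball about $s^\alpha$, on which it is arbitrarily Lipschitz-contracting. In particular $\gamma_0^n\in\mathcal{G}(\gamma_0,r,\epsilon)$ for all large $n$, for every $0<r\le r_0$ and every $\epsilon>0$; so $\mathcal{G}(\gamma_0,r,\epsilon)$ is non-empty.

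Next I would manufacture enough additional elements of $\mathcal{G}(\gamma_0,r,\epsilon)$ to force Zariski density, by a sandwiching trick. Let
\[
O=\bigcap_{\alpha\in\Delta\backslash I}\{g\in G:\ \sigma_\alpha(g)s^\alpha\notin X^\alpha_<\}.
\]
Each defining condition removes a proper Zariski-closed subset of $G$ --- proper, since otherwise the $\sigma_\alpha(G)$-orbit of $s^\alpha$ would lie in the proper projective subspace $X^\alpha_<$, contradicting irreducibility of $\sigma_\alpha$ --- so $O$ is Zariski open and dense, and, $\Gamma$ being Zariski dense, so is $\Gamma\cap O$. Fixing small $r,\epsilon>0$, I claim that for $g\in\Gamma\cap O$ and $n$ large (depending on $g$, through the transversality margin $\min_\alpha d(\sigma_\alpha(g)s^\alpha,X^\alpha_<)>0$), the element $\gamma_0^ng\gamma_0^n$ lies in $\mathcal{G}(\gamma_0,r,\epsilon)$. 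Indeed, in $\sigma_\alpha(\gamma_0^ng\gamma_0^n)=\sigma_\alpha(\gamma_0)^n\sigma_\alpha(g)\sigma_\alpha(\gamma_0)^n$ the first factor contracts the complement of the $\epsilon$-neighbourhood of $X^\alpha_<$ into a tiny ball about $s^\alpha$; $\sigma_\alpha(g)$ moves this to a tiny ball about $\sigma_\alpha(g)s^\alpha$, still well away from $X^\alpha_<$; and the last factor contracts it back to a tiny ball about $s^\alpha$. By Benoist's ``contraction implies proximality'' lemma (\cite[Sec.~2-3]{Benoist}), $\sigma_\alpha(\gamma_0^ng\gamma_0^n)$ is then $(r,\epsilon)$-proximal with attracting point within $\epsilon$ of $s^\alpha$ and repelling hyperplane within $\epsilon$ of $X^\alpha_<$, so $\gamma_0^ng\gamma_0^n\in\mathcal{G}(\gamma_0,r,\epsilon)$.

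To conclude Zariski density I would use that $\mathcal{G}(\gamma_0,\cdot,\cdot)$ is, up to a bounded and absorbable loss in its parameters, closed under multiplication: the same telescoping estimate (\cite[Sec.~2-3]{Benoist}) produces a constant $C$ (incorporating the metric comparison constants associated with the embedding $\iota_I$) such that every product of elements of $\mathcal{G}(\gamma_0,r_1,\epsilon_1)$ lies in $\mathcal{G}(\gamma_0,r_1-C\epsilon_1,2\epsilon_1)$. Taking $\epsilon_1=\epsilon/2$ and $r_1=r+C\epsilon/2$ (admissible for $r,\epsilon$ small), the subsemigroup $\mathcal{S}$ of $\Gamma$ generated by $\mathcal{G}(\gamma_0,r_1,\epsilon_1)$ satisfies $\mathcal{S}\subseteq\mathcal{G}(\gamma_0,r,\epsilon)$. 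Its Zariski closure $H$ is an algebraic subgroup of $G$ (the Zariski closure of any subsemigroup of an algebraic group is a subgroup). Now $H$ contains $\gamma_0^n$ for all large $n$, hence $\gamma_0$; and $H$ contains $\gamma_0^ng\gamma_0^n$ for every $g\in\Gamma\cap O$ and suitable $n$, hence $g=\gamma_0^{-n}(\gamma_0^ng\gamma_0^n)\gamma_0^{-n}\in H$. Therefore $H$ contains the Zariski closure of $\Gamma\cap O$, which is $G$. Thus $\mathcal{S}$, and a fortiori $\mathcal{G}(\gamma_0,r,\epsilon)$, is Zariski dense, as required.

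The one substantive point is the quantitative bookkeeping invoked in the last two paragraphs: how large the power $n$ must be taken in terms of the transversality margin of $g$; the verification that the sandwich $\gamma_0^ng\gamma_0^n$ is genuinely $(r,\epsilon)$-proximal with the stated control on its attracting point and repelling hyperplane; and the bounded degradation of the proximality constants under products. This is precisely the content of the estimates in \cite[Sec.~2-3]{Benoist}; the remaining ingredients --- the reduction via $\iota_I$, the Zariski-open set $O$, and the semigroup/Zariski-closure argument --- are straightforward.
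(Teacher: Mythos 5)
Your proposal is correct in outline, but note that the paper does not actually prove this lemma: its entire ``proof'' is the remark that the statement is \cite[3.6]{Benoist}, proved there for $r=2\epsilon$, and that the same argument handles general $(r,\epsilon)$-proximal elements. Your reconstruction is essentially that argument: pass to the projective spaces via the embedding \eqref{eq:embed}, use the transversality conditions $\sigma_\alpha(g)s^\alpha\notin X^\alpha_<$ to cut out a Zariski-open dense set $O$, sandwich $\gamma_0^n g\gamma_0^n$ for $g\in\Gamma\cap O$, and conclude via the fact that the Zariski closure of the subsemigroup generated by $\mathcal{G}(\gamma_0,r_1,\epsilon_1)$ is a group containing $\gamma_0$ and all these sandwiches, hence all of $\Gamma\cap O$. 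The only points requiring care are exactly the ones you flag and defer to \cite[Sec.~2-3]{Benoist}: that the degradation of the proximality constants under products is uniform in the number of factors (so that the whole semigroup stays inside $\mathcal{G}(\gamma_0,r,\epsilon)$), and that the $(r,\epsilon)$-proximality of the sandwich is verified with respect to its own attracting point and repelling hyperplane, which costs only a bounded factor in $\epsilon$ and an additive $O(\epsilon)$ loss in $r$; both are covered by the quantitative lemmas there, so there is no gap beyond what the paper itself leaves to the citation.
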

This lemma is proved in \cite{Benoist} for $r=2\epsilon$, but the same argument allows to treat $(r,\epsilon)$-proximal elements as well.

\begin{proof}[Proof of Proposition \ref{manysinks}]
Throughout the proof, to simplify notation, for $\gamma\in \Gamma$ and  $x\in F_I$,
we write $\rho_0(\gamma)x=\gamma x$ for the standard action $\Gamma$ on $F_I$.

Let $m_0\in M$ be a differentiable sink for $\rho(\gamma_0)$.
We fix a Riemannian metric on $M$ such that 
\begin{equation}
\label{eq:diff_contr}
\|D(\rho(\gamma_0))_{m_0}\|<1.
\end{equation}
Since $\phi$ is a homeomorphism, it follows that $\phi^{-1}(m_0)$ is 
a topological sink for $\gamma_0$. Hence, it follows from Proposition \ref{p:sink}
that $\gamma_0$ is $(\Delta\backslash I)$-regular, and $\phi^{-1}(m_0)=s_{\gamma_0}$ is 
the differentiable sink for $\gamma_0$.

Since $\gamma_0$ is $(\Delta\backslash I)$-regular, there exists $r_0=r_0(\gamma_0)>0$ such that
for every $\lambda>0$ and 
every $n\ge n_0(\lambda)$, the element $\gamma_0^n$ is 
$(\Delta\backslash I,r_0,\lambda)$-regular.
We fix $r_0$ as above, and moreover, assume that it is sufficiently small,
so that the set $\mathcal{G}(\gamma_0,r_0,\epsilon)$ is Zariski dense
for all sufficiently small $\epsilon>0$ (see Lemma \ref{benoist}).

We claim that  for every $\kappa\in (0,1)$ and $\delta\in \mathcal{G}(\gamma_0,r_0,\epsilon)$, there exists $c=c(\delta)>0$ such that for $\epsilon\in (0,r_0/3)$ as above and $n\ge n_0(\kappa,\delta)$, 
we have 
\begin{align}
\delta \gamma _0 ^n \delta ^{-1} (B_{\epsilon/c} (\delta s_{\gamma _0})) &\subset B_{\epsilon/c} (\delta s_{\gamma _0}), \label{eq:1}\\
   \| D (\rho (\delta \gamma _0 ^n \delta ^{-1})_x  \| &\leq \kappa\quad\hbox{when $x\in \phi(B_{\varepsilon/c} (\delta s_{\gamma _0}))$.} \label{eq:2}
\end{align}
The quantity $n_0(\kappa,\delta)$ will be specified along the proof.

To prove \eqref{eq:1}, we choose $c=c(\delta)\ge 1$ so that
$$
d(\delta^{-1}x,\delta^{-1}y)\le c\,d(x,y)\quad\hbox{for all $x,y\in F_I.$}
$$
This implies that 
\begin{equation}
\label{eq:cont2}
\delta^{-1}(B_{\epsilon/c}(\delta s_{\gamma_0}))\subset 
B_{\epsilon}(s_{\gamma_0}).
\end{equation}
We take $\lambda=\min\{1/c,r_0/2\}$.
For every $n\ge n_0(\lambda)$, the element $\gamma_0^n$ is $(\Delta\backslash I,r_0,\lambda)$-regular.
Then since $\epsilon<r/3$, we have $r_0-\epsilon\ge \lambda$, and 
$$
\iota_\alpha(B_{\epsilon}(s_{\gamma_0}))\subset 
B^{\lambda}_{\sigma_\alpha(\gamma_0)}\quad\hbox{for all $\alpha\in\Delta\backslash I$}.
$$
Hence, since the transformations $\sigma_\alpha(\gamma_0^n)|_{B^{\lambda}_{\sigma_\alpha(\gamma_0)}}$,
$\alpha\in\Delta\backslash I$, are $\lambda$-Lipschitz, we conclude that
$$
\gamma_0^n(B_{\epsilon}(s_{\gamma_0}))\subset B_{\lambda\epsilon}(s_{\gamma_0})
\subset B_{\epsilon/c}(s_{\gamma_0}).
$$
Since $d(s_\delta,s_{\gamma_0})\le\epsilon$, we also have 
$$
B_{\epsilon/c}(s_{\gamma_0}) \subset B_{\epsilon/c+\epsilon}(s_{\delta})\subset B_{2\epsilon}(s_{\delta}),
$$
and since $\epsilon < r_0/3$,
$$
\iota_\alpha(B_{2\epsilon}(s_{\delta}))\subset 
B^{\epsilon}_{\sigma_\alpha(\delta)}\quad\hbox{for all $\alpha\in\Delta\backslash I$}.
$$
Using that the transformations $\sigma_\alpha(\delta)|_{B^{\epsilon}_{\sigma_\alpha(\delta)}}$,
$\alpha\in\Delta\backslash I$, are $\epsilon$-Lipschitz, we deduce that
$$
\delta(B_{\epsilon/c}(s_{\gamma_0}))\subset B_{\epsilon^2/c}(\delta s_{\gamma_0})
$$
which is contained in $B_{\epsilon/c}(\delta s_{\gamma_0})$ provided that $\epsilon\le 1$.
This completes the proof of \eqref{eq:1}.

Now we proceed with proving \eqref{eq:2}. It follows from \eqref{eq:cont2} that
\begin{equation}
\label{eq:step1}
\rho(\delta^{-1})(\phi(B_{\epsilon/c}(\delta s_{\gamma_0})))\subset 
\phi(B_{\epsilon}(s_{\gamma_0})).
\end{equation}
We take $\ell_1\ge 1$ such that $\gamma_0^{\ell_1}$ is $(\Delta\backslash I,r_0,\epsilon_0)$-regular
with $\epsilon_0=\min\{r_0,1\}/2$.
Then since we have assumed that $\epsilon<r_0/3$, we have 
$\epsilon_0<r_0-\epsilon$,  and  for all $\alpha\in\Delta\backslash I$,
$$
\iota_\alpha(B_{\epsilon}(s_{\gamma_0}))\subset B^{\epsilon_0}_{\sigma_\alpha(\gamma_0)}.
$$
This implies that 
\begin{equation}
\label{eq:cont0}
\gamma_0^{\ell_1}(B_{\epsilon}(s_{\gamma_0})) \subset  B_{\epsilon_0}(s_{\gamma_0}).
\end{equation}
Moreover, since the transformations $\sigma_\alpha(\gamma_0^{\ell_1})|_{B^{\epsilon_0}_{\sigma_\alpha(\gamma_0)}}$,
$\alpha\in\Delta\backslash I$, are $\epsilon_0$-Lipschitz, 
\begin{equation}
\label{eq:contr}
\gamma_0^{\ell_1} (B_{\theta}(s_{\gamma_0}))\subset B_{\epsilon_0\theta}(s_{\gamma_0})\quad \hbox{for every $\theta\in (0,\epsilon_0]$.}
\end{equation}
This implies that the sets $W_\theta=\phi(B_{\theta}(s_{\gamma_0}))$ give  
$\rho(\gamma_0^{\ell_1})$-invariant neighbourhoods of $m_0$ for $\theta\in (0,\epsilon_0]$.
We choose $\theta_0\in (0,\epsilon_0]$ sufficiently small so that 
$$
\eta=\sup_{x\in W_{\theta_0}} \|D(\rho(\gamma_0^{\ell_1}))_x\|<1.
$$
This is possible in view of \eqref{eq:diff_contr}.
Then it follows from the Chain Rule that there exists $C>0$ such that for every $n\ge 1$,
\begin{equation}
\label{eq:lambda}
\sup_{x\in W_{\theta_0}} \|D(\rho(\gamma_0^n))_x\|\le C\, \eta^{\lfloor n/\ell_1\rfloor }.
\end{equation}
Using \eqref{eq:contr}, we deduce that there exists $\ell_2\ge  1$ such that 
$$
\gamma_0^{\ell_2}(B_{\epsilon_0}(s_{\gamma_0}))\subset B_{\theta_0}(s_{\gamma_0}).
$$
Then it follows from \eqref{eq:cont0} that
$$
\gamma_0^{\ell_1+\ell_2}(B_{\epsilon}(s_{\gamma_0})) \subset  B_{\theta_0}(s_{\gamma_0}),
$$
and 
$$
\rho(\gamma_0^{\ell_1+\ell_2})(\phi(B_{\epsilon}(s_{\gamma_0}))) \subset  W_{\theta_0}.
$$
Hence, using \eqref{eq:step1}, we conclude that 
$$
\rho(\gamma_0^{\ell_1+\ell_2}\delta^{-1})(\phi(B_{\epsilon/c}(\delta s_{\gamma_0}))) \subset  W_{\theta_0}.
$$
Now the claim \eqref{eq:2} follows the estimate \eqref{eq:lambda} and the Chain Rule.

We consider the semigroup of the form
$$
S=\left< \delta \gamma_0^{n} \delta^{-1}: \, \delta\in \mathcal{G}(\gamma_0,r_0,\epsilon), \, n\ge n_0(\kappa, \delta)\right>. 
$$
It follows from property \eqref{eq:1} that for every $\gamma\in S$, the transformation $\rho(\gamma)$ preserves the neighbourhood $U= \phi(B_{\epsilon/c}(\delta s_{\gamma_0}))$.
Moreover, by \eqref{eq:2},
$$
\sup_{x\in U} \| D (\rho (\gamma))_x  \|\leq \kappa.
$$
When $\kappa$ is sufficiently small, this implies that $\rho(\gamma)$ is a contraction with respect to the Riemannian metric on $U$. Hence, we conclude that the map $\rho(\gamma)$ has a fixed point in $U$ which is a differentiable sink.

It remains to show that the semigroup $S$ is Zariski dense.
Let $\bar S$ be the Zariski closure of $S$.
We denote by $A_n$ the Zariski connected component of the Zariski closure of the cyclic group $\left<\gamma_0^n\right>$. We note that since $\gamma_0$ has a sink, it must be of infinite order, so that $A_n$ is not trivial for all $n$.
Moreover, we may assume that the projections of $\gamma_0$
to all non-trivial simple factors of $G$ also have infinite order.
Indeed, suppose that for some non-trivial simple factor $G_i\subset G$, the $G_i$-component of $\gamma_0$
has finite order. Then for some $n$, the transformation $\gamma_0^n$ acts trivially on the submanifold $G_is_{\gamma_0}\subset F_I$. Since $s_{\gamma_0}$ is a sink for $\gamma_0$,
this implies that $G_is_{\gamma_0}=s_{\gamma_0}$, and $G_i\subset P_I$.
In this case, we can replace the group $G$ by $G/G_i$. Hence, without loss of generality,
the projections of $\gamma_0$
to all non-trivial simple factors of $G$ also have infinite order.

We note that $A_m\supset A_n$ when $m$ divides $n$ and 
consider the descending sequence of Zariski closed subgroups $B_n=A_{n!}$. 
For sufficiently large $n$, this sequence
stabilises, and we denote the minimal element by $B$.
For every $\delta\in \mathcal{G}(\gamma_0,r,\epsilon)$, we have
$\delta B \delta^{-1} \subset \bar S$. 
Hence, it follows from Zariski density of $\mathcal{G}(\gamma_0,r,\epsilon)$ that 
$\bar S$ contains the conjugacy class $B^G$. 
Since $S$ is a semigroup, its Zariski closure $\bar S$ is a group. 
We conclude that $\bar S$ contains the normal subgroup generated by $B$.  Since the projections of $\gamma _0$ to all nontrivial simple factors of $G$ have infinite order, it follows 
 $\bar S=G$, so that $S$ is Zariski dense.
\end{proof}

%%%%%%%%%%%%%%%%%%%%%%%%%%%%%%%%%%%%%%%%%%%%%%%%%%%%%%%%%%%%%%%%%%%%%%%%%%%%%%%%%%%%%%%%%%%%%%%%%%%%%%%%%%%%%%%
%%%%%%%%%%%%%%%%%%%%%%%%%%%%%%%%%%%%%%%%%%%%%%%%%%%%%%%%%%%%%%%%%%%%%%%%%%%%%%%%%%%%%%%%%%%%%%%%%%%%%%%%%%%%%%%

\section{Projection maps}
\label{sec:projections} 

Let $G$ be a connected semisimple Lie group, $F_I=G/P_I$ a flag manifold, and 
$\Gamma \subset G$ a lattice subgroup of $G$.  We denote by $\rho_0$ the action of $\Gamma$ on $F_I$. Let $\rho$ be a smooth action of $\Gamma$  on a compact manifold $M$.
In this section we study properties of a $C^0$-conjugacy map
$$
\phi: F_I \to M
$$
that intertwines the actions $\rho _0$ and $\rho$. We assume that every simple factor of $G$
has real rank at least two. The aim of this section is to construct a family of projections maps
$\pi_0^{(\alpha)}:U_I \to U_I$ defined on the open cell $U_I=N_I^- P_I\subset F_I$
and the corresponding projection maps  $\tilde \pi_0^{(\alpha)}$ for $M$ which are conjugated to $\pi_0^{(\alpha)}$ via $\phi$.

Since the centre of $G$ acts trivially on the flag manifold $F_I$, we may assume without loss of generality that $G$ is centre-free. 
It follows from the Margulis Arithmetiticy theorem 
(see  \cite[Ch.~IX]{margulis-book} or \cite[Ch.~6]{zimmer}) that 
there exist a connected semisimple algebraic $\Q$-group $\bf G$ and surjective homomorphism 
$\iota:{\bf G}(\R)^\circ\to G$ such that $\hbox{ker}(\iota)$ is compact and 
$\iota({\bf G}(\Z)\cap {\bf G}(\R)^\circ)$ is commensurable to $\Gamma$.
Hence, without loss of generality, we may assume that 
$G={\bf G}(\R)^\circ$ and $\Gamma$ is a finite index subgroup of ${\bf G}(\Z)\cap {\bf G}(\R)^\circ$.

In order to have rich dynamics in a neighbourhood of a sink,
we need to construct commuting elements satisfying certain independence properties.
More precisely, these elements will be chosen in the centraliser $Z_\Gamma(\gamma_0)$
where $\gamma_0$ is picked from a given Zariski dense subsemigroup $S$.
Eventually, we apply this construction for the semigroup $S\subset \Gamma$ introduced in Proposition \ref{manysinks}, but the discussion in the first part of this section applies
to arbitrary Zariski dense subsemigroup $S\subset  {\bf G}(\Q)$.

Our argument is based on the results established by Prasad and Rapinchuk \cite{Prasad-Rapinchuk}. We also refer to \cite{Prasad-Raghunathan} for basic properties
of regular and $\R$-regular elements. We start by introducing required notation.
We denote by $Z_{\bf G}(g)^\circ$ the connected component of the centraliser of $g$ in $\bf G$
with respect to the Zariski topology.
% and by $Z_{G}(g)^\circ$ the connected component of the centraliser of $g$ in $G$ with respect to the Euclidean topology.
We recall that if $g\in G$ is a regular $\R$-regular element, then 
$Z_{\bf G}(g)^\circ$ is a maximal torus in $\bf G$, and 
\begin{equation}
\label{eq:centraliser}
Z_{\bf G}(g)^\circ={\bf B}_g {\bf T}_g,
\end{equation}
where ${\bf B}_g$ is a torus such that ${\bf B}_g(\R)$ is compact,
and ${\bf T}_g$ is a maximal $\R$-split torus in ${\bf G}$. 
We note that an $\R$-regular element is necessarily semisimple.
In particular, it follows from \cite[11.12]{Borel} that $g\in Z_{\bf G}(g)^\circ$.
For a character $\chi$ of ${\bf T}_g$ and $h=bt\in Z_{\bf G}(g)^\circ={\bf B} {\bf T}_g$,
we set $\alpha(h)=\alpha(t)$. Since $Z_{\bf G}(g)^\circ$ has finite index in $Z_{\bf G}(g)$,
it follows that for every $h\in Z_{\bf G}(g)$, $h^\ell\in Z_{\bf G}(g)^\circ$ for some $\ell\ge 1$.
In that case, we set $\chi(h)=\chi(h^\ell)^{1/\ell}$. It is clear that this definition
is independent of the choice of exponent $\ell$.

We say that $g\in {\bf G}(\Q)$ is \emph{anisotropic} if 
the torus $Z_{\bf G}(g)^\circ$ is anisotropic over $\Q$.

%Also,
%$$
%Z_{G}(g)^0={\bf B}(\R)^\circ {\bf T}_g(\R)^\circ,
%$$
%If we write $g$ as a commuting product $g=g_c g_{nc}$ 
%where $\hbox{Ad}(g_c)$ is semisimple and has all eigenvalues of modulus one, and 
%$\hbox{Ad}(g_{nc})$ is semisimple and has all eigenvalues real and positive,
%then $g_{nc}\in {\bf T}_g(\R)^\circ$. 
%and  where $A$ is suitable Cartan subgroup of $G$, and $B=Z_K(A)^0$ for a suitable maximal compact subgroup $K$ of $G$. Moreover, the element $g$ can be written as a commuting product $g=ka$ with $k\in K$ and $a\in A$. 
The group ${\bf G}$ has a decomposition 
as an almost direct product 
\begin{equation}
\label{eq:prod0}
{\bf G}={\bf G}^{(1)}\cdots {\bf G}^{(r)},
\end{equation}
where ${\bf G}^{(i)}$'s  are the connected $\Q$-simple subgroups of ${\bf G}$.
We say that an element $g\in {\bf G}$ is \emph{without components of finite order} if 
with respect to this decomposition,
$g=g_1\cdots g_r$ with all $g_i$ of infinite order.
A maximal $\Q$-subtorus ${\bf T}$ of ${\bf G}$ is called \emph{$\Q$-quasi-irreducible} if it does not contain any $\Q$-subtori
other than almost direct products of the tori ${\bf T}^{(i)}={\bf T}\cap {\bf G}^{(i)}$.

We say that commuting elements $\delta_1,\delta_2\in {\bf G}(\Q)$ are \emph{multiplicatively independent} if the projections of $\delta_1$ and $\delta_2$ on every non-trivial $\Q$-simple factor of ${\bf G}$ generate a subgroup isomorphic to $\Z^2$.

\begin{lemma}   \label{prasad-rapinchuk}
Let $S$ be a Zariski dense subsemigroup of ${\bf G}(\Q)$.
Then there exists a regular $\R$-regular element $\gamma_0\in S$ 
which is anisotropic, without components of finite order, and 
such that for every commuting multiplicatively independent
$\delta_1,\delta_2\in Z_{{\bf G}}(\gamma_0)(\Q)$ and every 
non-trivial $\R$-character $\chi$ of ${\bf T}_{\gamma_0}$, the real numbers $\chi(\delta_1)$ and $\chi(\delta_2)$ are multiplicatively independent.
\end{lemma}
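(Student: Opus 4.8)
The plan is to produce $\gamma_0$ from the Prasad--Rapinchuk existence theorem for generic elements in Zariski dense subgroups, and then to derive the multiplicative independence of characters by a Galois descent argument built on the $\Q$-quasi-irreducibility of the centraliser torus.

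First I would invoke \cite{Prasad-Rapinchuk} to find a regular $\R$-regular element in $S$ whose connected centraliser ${\bf T}:=Z_{{\bf G}}(\gamma_0)^\circ$ is a maximal $\Q$-torus which is $\Q$-quasi-irreducible; since that construction prescribes $\gamma_0$ modulo a high power of a suitable prime $p$, I would use this freedom to ensure in addition that the torus ${\bf T}$ is anisotropic over $\Q_p$ (hence over $\Q$) and that the image of $\gamma_0$ in each $\Q$-simple factor has infinite order, giving the anisotropy and the absence of components of finite order. Writing ${\bf T}={\bf B}_{\gamma_0}{\bf T}_{\gamma_0}$ as in \eqref{eq:centraliser}, I note that a non-trivial $\R$-character $\chi$ of ${\bf T}_{\gamma_0}$ corresponds, after multiplication by a suitable $k\ge1$, to a non-zero character $\widetilde\chi$ of ${\bf T}$ that is trivial on ${\bf B}_{\gamma_0}$ and satisfies $\widetilde\chi(\delta)=\chi(\delta)^k$ for all $\delta\in{\bf T}(\Q)$.

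The main content is the multiplicative independence statement. Given commuting multiplicatively independent $\delta_1,\delta_2\in Z_{{\bf G}}(\gamma_0)(\Q)$ and a non-trivial $\R$-character $\chi$ of ${\bf T}_{\gamma_0}$, I would argue by contradiction: suppose $\chi(\delta_1)^m\chi(\delta_2)^n=1$ for some $(m,n)\in\Z^2$ with $(m,n)\neq(0,0)$. Since $\chi$ is multiplicative on commuting elements, $\delta:=\delta_1^m\delta_2^n$ satisfies $\chi(\delta)=1$; after replacing $\delta$ by a fixed power I may assume $\delta\in{\bf T}(\Q)$, so that $\widetilde\chi(\delta)$ is a root of unity. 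I then consider
$$
X_\delta=\{\eta\in X^*({\bf T}):\ \eta(\delta)\ \text{is a root of unity}\},
$$
which, because $\delta$ is $\Q$-rational, is a saturated $\mathrm{Gal}(\overline{\Q}/\Q)$-stable subgroup of the character lattice, and which is non-zero since it contains $\widetilde\chi$. Hence $X_\delta$ is the character lattice of a non-trivial quotient $\Q$-torus $q\colon{\bf T}\twoheadrightarrow{\bf Q}$; as every character of ${\bf Q}$ is a root of unity on $q(\delta)$, the point $q(\delta)$ is torsion, so a power of $\delta$ lies in the kernel of $q$, a \emph{proper} $\Q$-subtorus ${\bf K}$ of ${\bf T}$. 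By $\Q$-quasi-irreducibility, ${\bf K}$ is an almost direct product of the tori ${\bf T}^{(i)}={\bf T}\cap{\bf G}^{(i)}$ over a proper subset $J\subsetneq\{1,\dots,r\}$; choosing $i_0\notin J$, the projection of $\delta$ to the non-trivial $\Q$-simple factor ${\bf G}^{(i_0)}$ has finite order. But by multiplicative independence the projections of $\delta_1,\delta_2$ to ${\bf G}^{(i_0)}$ generate a free abelian group of rank two, so the projection of $\delta=\delta_1^m\delta_2^n$, with $(m,n)\neq(0,0)$, has infinite order --- a contradiction, proving that $\chi(\delta_1)$ and $\chi(\delta_2)$ are multiplicatively independent.

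The principal obstacle lies in the first step rather than the second. The descent argument is essentially formal; in particular $X_\delta$ is $\mathrm{Gal}(\overline{\Q}/\Q)$-stable purely because $\delta$ is $\Q$-rational, so the possibly larger field of definition of the $\R$-split torus ${\bf T}_{\gamma_0}$ creates no difficulty. What requires care is exhibiting a \emph{single} element of $S$ that is at once regular $\R$-regular, anisotropic, without components of finite order, and has $\Q$-quasi-irreducible centraliser: this rests on the full Prasad--Rapinchuk apparatus (strong approximation together with the local analysis of tori of $\R$-regular elements), and checking that their construction can be tuned to meet all of these constraints simultaneously inside an arbitrary Zariski dense subsemigroup is the crux.
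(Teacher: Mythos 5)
Your proposal is correct and follows essentially the same route as the paper: both obtain $\gamma_0$ from Prasad--Rapinchuk's Theorem 2 (regular $\R$-regular element with $\Q$-quasi-irreducible, $\Q$-anisotropic centraliser torus), and both derive multiplicative independence by contradiction, noting that $\delta_1^{n_1}\delta_2^{n_2}$ with $\chi(\delta_1^{n_1}\delta_2^{n_2})=1$ generates a proper $\Q$-subtorus of $Z_{\bf G}(\gamma_0)^\circ$, which by quasi-irreducibility forces a finite-order projection to some $\Q$-simple factor, contradicting the multiplicative independence of $\delta_1,\delta_2$. Your character-lattice/Galois-descent formalisation of the step ``the Zariski closure of $\langle\delta_1^{n_1}\delta_2^{n_2}\rangle$ is a proper $\Q$-subtorus'' is just a more explicit rendering of the one-line argument in the paper.
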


\begin{proof}
It follows from \cite[Theorem 2]{Prasad-Rapinchuk} that there exists regular $\R$-regular $\gamma_0\in S$ such that $Z_{\bf G}(\gamma_0)^\circ$ is a $\Q$-quasi-irreducible torus in ${\bf G}$ which is anisotropic over $\Q$. Let us suppose that for some
non-trivial $\R$-character $\chi$ of ${\bf T}_{\gamma_0}$, the real numbers $\chi(\delta_1)$ and $\chi(\delta_2)$ are multiplicatively dependent, that is, there exists $(n_1,n_2)\in\Z^2\backslash \{(0,0)\}$ such that 
$$
\chi(\delta_1)^{n_1}\chi(\delta_2)^{n_2}=
\chi(\delta_1^{n_1}\delta_2^{n_2})=1.
$$
Replacing $\delta_1, \delta_2$ by $\delta_1^\ell, \delta_2^\ell$ for suitable $\ell \ge 1$,
we may assume without loss of generality that $\delta_1, \delta_2\in Z_{\bf G}(\gamma_0)^\circ$.
Then the subgroup $\left<\delta_1^{n_1}\delta_2^{n_2}\right>$ is contained in a proper subtorus
of $Z_{\bf G}(\gamma_0)^\circ$. Hence, its Zariski closure gives a proper $\Q$-subtorus of 
$Z_{\bf G}(\gamma_0)^\circ$. Since $Z_{\bf G}(\gamma_0)^\circ$ is $\Q$-quasi-irreducible,
it follows that the projection of $\delta_1^{n_1}\delta_2^{n_2}$ to one of the  non-trivial $\Q$-simple factors of ${\bf G}$ should have finite order. However, this contradicts the assumption that 
$\delta_1$ and $\delta_2$ are multiplicatively independent. Hence, we conclude that 
$\chi(\delta_1)$ and $\chi(\delta_2)$ are multiplicatively independent for all 
non-trivial $\R$-characters $\chi$.
\end{proof}

Given a regular $\R$-regular $g\in G$, we denote by $\Phi_g=\Phi({\bf T}_g, {\bf G})$ the root system arising from the action of ${\bf T}_g$ on the Lie algebra of  ${\bf G}$.
Once an ordering on $\Phi({\bf T}_g, {\bf G})$ is given, we define the set
 of positive roots $\Phi_g^+$, the set
 of negative roots $\Phi_g^-$, and the set of simple roots $\Delta_g\subset\Phi_g^+$.
%For $I\subset \Delta_g$, we denote by $\Phi_g^I$ the set of roots which are products of elements from $I$.

\begin{lemma}\label{l:roots}
Let	$\gamma _0 \in \Gamma$ be an element as in  Lemma \ref{prasad-rapinchuk}.
We fix an ordering on $\Phi_{\gamma_0}$ such that 
	\begin{equation}
	\label{eq:ordering}
	\alpha(\gamma_0)<1\quad\hbox{ for all $\alpha\in \Phi_{\gamma_0}^-$.}
	\end{equation}
	Then for every simple root $\alpha_0\in  \Delta_{\gamma_0}$, there exists a sequence $\delta_n \in \Gamma\cap Z_{\bf G}(\gamma_0)^\circ$ consisting of commuting $\R$-regular elements and satisfying
$$
 \alpha_0(\delta_n)\to 1,
$$
and 
$$
\alpha (\delta_n) \rightarrow 0\quad\hbox{for all $\alpha\in  \Phi_{\gamma_0}^-$ that are not proportional to $\alpha_0$.}
$$
\end{lemma}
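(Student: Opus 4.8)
The idea is to realize the desired sequence $\delta_n$ inside the arithmetic group $\Gamma\cap Z_{\mathbf G}(\gamma_0)^\circ$ by exploiting that $Z_{\mathbf G}(\gamma_0)^\circ = \mathbf B_{\gamma_0}\mathbf T_{\gamma_0}$ is an anisotropic maximal torus defined over $\mathbb Q$, hence its group of integer points is a lattice of full rank in the real torus $Z_{\mathbf G}(\gamma_0)^\circ(\mathbb R)\cong \mathbf B_{\gamma_0}(\mathbb R)\times \mathbf T_{\gamma_0}(\mathbb R)$, with $\mathbf B_{\gamma_0}(\mathbb R)$ compact and $\mathbf T_{\gamma_0}(\mathbb R)\cong (\mathbb R^\times)^{\dim \mathbf T_{\gamma_0}}$. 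First I would record that the $\mathbb R$-characters of $\mathbf T_{\gamma_0}$ whose restrictions to $\Phi_{\gamma_0}^-$ one cares about are the negative roots $\Phi_{\gamma_0}^-$, and that $\Delta_{\gamma_0}$ being a basis means any real point $t$ of the identity component of $\mathbf T_{\gamma_0}(\mathbb R)$ is determined up to sign ambiguities by the values $\{\alpha(t):\alpha\in\Delta_{\gamma_0}\}$, which can be prescribed freely and independently in $\mathbb R_{>0}$.

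**Key steps.** (1) Fix the simple root $\alpha_0$. Consider the one-parameter subgroup $\{t\in \mathbf T_{\gamma_0}(\mathbb R)^\circ : \beta(t)=1\ \text{for all }\beta\in\Delta_{\gamma_0}\setminus\{\alpha_0\}\}$; on this subgroup $\alpha_0$ is a coordinate ranging over $\mathbb R_{>0}$, while every $\alpha\in\Phi_{\gamma_0}^-$ not proportional to $\alpha_0$, being a $\mathbb Z$-combination of simple roots involving at least one simple root $\neq\alpha_0$, restricts to a power $\alpha_0^{m_\alpha}$ with $m_\alpha<0$ (since $\alpha$ is negative and its $\alpha_0$-free part vanishes on this subgroup, the remaining exponent $m_\alpha$ is the coefficient of $\alpha_0$ in $\alpha$, which is $\le 0$; and it is strictly negative precisely because $\alpha$ is a negative root not proportional to $\alpha_0$, so $\alpha\ne 0$ forces $m_\alpha\ne 0$ — wait, one must be careful: a negative root could have zero $\alpha_0$-coefficient, e.g. $-\beta$ for $\beta\in\Delta_{\gamma_0}\setminus\{\alpha_0\}$). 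So (2) I would instead arrange the limiting direction to push $\alpha_0$ toward $1$ and simultaneously push every other negative simple root toward $0$: pick a sequence of real points $t_n\in\mathbf T_{\gamma_0}(\mathbb R)^\circ$ with $\alpha_0(t_n)\to 1$ and $\beta(t_n)\to 0$ for all $\beta\in\Delta_{\gamma_0}\setminus\{\alpha_0\}$; then for any $\alpha\in\Phi_{\gamma_0}^-$ not proportional to $\alpha_0$, writing $\alpha=-\sum_{\beta\in\Delta}c_\beta\beta$ with $c_\beta\ge0$, some $c_\beta>0$ for $\beta\ne\alpha_0$, hence $\alpha(t_n)=\prod\beta(t_n)^{-c_\beta}$ — but negative exponents on factors going to $0$ diverge, so I need the $t_n$ going the other way, i.e. $\beta(t_n)\to\infty$ for $\beta\ne\alpha_0$; then $\alpha(t_n)\to0$ as required since the total exponent is negative and at least one base diverges while the base $\alpha_0(t_n)\to1$ contributes nothing. (3) The real problem is to replace the continuous points $t_n$ by elements of the arithmetic lattice $\Lambda:=\Gamma\cap Z_{\mathbf G}(\gamma_0)^\circ$ in $Z_{\mathbf G}(\gamma_0)^\circ(\mathbb R)$: since $\mathbf B_{\gamma_0}(\mathbb R)$ is compact and $\Lambda$ projects to a full-rank lattice in $\mathbf T_{\gamma_0}(\mathbb R)^\circ/(\text{finite})\cong(\mathbb R_{>0})^{\mathrm{rk}}$ via $\log\circ(\alpha)_{\alpha\in\Delta_{\gamma_0}}$, the image lattice $\log\Lambda\subset\mathbb R^{\mathrm{rk}}$ spans $\mathbb R^{\mathrm{rk}}$, so I can find $\delta_n\in\Lambda$ with $\log\alpha_0(\delta_n)\to0$ and $\log\beta(\delta_n)\to+\infty$ for the other simple roots — e.g. by choosing $\delta_n$ in a suitable cone approximating the direction $(0,1,\dots,1)$ with the $\alpha_0$-coordinate staying bounded and the rest growing, using density of rational directions / Dirichlet-type approximation in the lattice. (4) Finally, verify $\R$-regularity: generic elements of the $\R$-split torus $\mathbf T_{\gamma_0}$ are $\R$-regular, the non-$\R$-regular ones lie in a proper Zariski-closed subset (finite union of root-kernel hypertori); since the $\delta_n$ escape to infinity in every root direction except $\alpha_0$ one checks directly that no root of $\mathbf G$ relative to $\mathbf T_{\gamma_0}$ vanishes on $\delta_n$ for large $n$ — or one perturbs within $\Lambda$ by a fixed $\R$-regular element of $\Lambda$ (which exists, e.g. a power of $\gamma_0$) to kill any accidental degeneracy, noting that multiplying by such an element changes each $\alpha(\delta_n)$ by a fixed nonzero factor and does not affect the limits $1$ and $0$. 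They commute because they all lie in the torus $Z_{\mathbf G}(\gamma_0)^\circ$.

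**Main obstacle.** The delicate point is step (3): producing genuine elements of the arithmetic group $\Gamma$ (not just of the ambient real torus) realizing the prescribed asymptotics on the simple-root characters. This needs that $\Lambda=\Gamma\cap Z_{\mathbf G}(\gamma_0)^\circ$ is a cocompact lattice of full rank $=\dim\mathbf T_{\gamma_0}$ in $\mathbf T_{\gamma_0}(\mathbb R)^\circ$ modulo the compact part, which is exactly where anisotropy over $\mathbb Q$ and the Dirichlet unit theorem (applied to the $\mathbb Q$-anisotropic torus $Z_{\mathbf G}(\gamma_0)^\circ$, cf. the structure theory of tori and Prasad–Rapinchuk) enter; once the log-image is a full-rank lattice, a pigeonhole/Dirichlet argument on which lattice points fall into a thin cone around the direction $(0,1,\dots,1)$ gives the sequence. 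A secondary, more routine, point is bookkeeping the exact exponents $c_\beta$ to be sure that every negative root not proportional to $\alpha_0$ really has a strictly positive total "growing" exponent — this is immediate from $\alpha_0$ being a simple root and the fact that any other negative root involves, with a positive coefficient, at least one simple root different from $\alpha_0$.
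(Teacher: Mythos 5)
Your overall strategy coincides with the paper's: use $\Q$-anisotropy of $Z_{\bf G}(\gamma_0)^\circ$ to get a full-rank arithmetic lattice $\Lambda$ in the split part of the torus, pass to $\log$-coordinates given by the simple roots, and pick lattice points approaching the wall $L_{\alpha_0}=0$ while escaping to infinity in the remaining directions. The gap is in your step (3), which is exactly the crux. A full-rank lattice in $\R^r$ need not contain points with $L_{\alpha_0}\to 0$, $L_{\alpha_0}\ne 0$, and the other coordinates diverging: for $\Z^r$ in standard coordinates, every lattice point with $|x_1|<1$ has $x_1=0$ exactly, and then $\alpha_0(\delta_n)=1$ kills $\R$-regularity. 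Likewise, points in a ``thin cone around $(0,1,\dots,1)$'' only give $L_{\alpha_0}(\delta_n)=o(\|\delta_n\|)$, not $L_{\alpha_0}(\delta_n)\to 0$. What rescues the argument in the paper is precisely the multiplicative-independence conclusion of Lemma \ref{prasad-rapinchuk}, which you never actually use: the paper passes to a rank-two sublattice $\Lambda_0=\left<\delta_0,\delta_1\right>$ with $\delta_0$ deep in the Weyl chamber and $\delta_1$ multiplicatively independent from it (this is where the hypothesis that each $\Q$-simple factor has $\R$-rank $\ge 2$ enters, another ingredient absent from your write-up). Multiplicative independence forces $\ker(L_\alpha|_{\Lambda_0})=0$ for every root $\alpha$, so $L_{\alpha_0}(\Lambda_0)$ is a dense rank-two subgroup of $\R$ (one can approach $0$ without hitting it), every nontrivial element of $\Lambda_0$ is automatically $\R$-regular, and non-proportional roots restrict to non-proportional functionals on $\Lambda_0$, which is what guarantees $L_\alpha(\delta_n)\to-\infty$ for the remaining negative roots rather than, say, $L_\alpha(\delta_n)\to 0$ as well.

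A secondary error: your fallback for $\R$-regularity --- multiplying $\delta_n$ by a fixed $\R$-regular element $\gamma$ of $\Lambda$ --- does not work, because it replaces the limit $\alpha_0(\delta_n)\to 1$ by $\alpha_0(\delta_n\gamma)\to\alpha_0(\gamma)\ne 1$, destroying the first required property. In the paper no such repair is needed, since $\R$-regularity of all nontrivial elements of $\Lambda_0$ is part of what multiplicative independence buys. So the proposal identifies the right obstacle but does not overcome it; the missing idea is the reduction to a rank-two sublattice governed by Lemma \ref{prasad-rapinchuk}.
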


\begin{proof}
Since the group ${\bf H}=Z_{\bf G}(\gamma_0)^\circ$ is anisotropic over $\Q$, 
it follows that ${\bf H}(\Z)$ is a lattice in ${\bf H}(\R)$. 
In particular, we deduce that $\Gamma\cap {\bf H}$ is a lattice in ${\bf H}(\R)$, and it contains a subgroup $\Lambda\simeq \Z^r$ where $r=\dim({\bf T}_{\gamma_0})$ is the $\R$-rank of ${\bf G}$.
It also follows that $\gamma_0^\ell \in \Lambda$ for some $\ell\ge 1$.
We have a decomposition 
$$
{\bf H}={\bf H}^{(1)}\cdots {\bf H}^{(r)},
$$
where 
${\bf H}^{(i)}={\bf H}\cap {\bf G}^{(i)}$ and ${\bf G}^{(i)}$'s are the connected
$\Q$-simple normal subgroups of $\bf G$ from \eqref{eq:prod0}. 
Since ${\bf H}^{(i)}$'s are anisotropic over $\Q$,
${\bf H}^{(i)}(\Z)$ is a lattice in ${\bf H}^{(i)}(\R)$ as well.
It follows from our assumption on the rank of $G$ that each of the factors ${\bf G}^{(i)}$ has $\R$-rank at least two, so that  
\begin{equation}
\label{eq:higher rank}
{\bf H}^{(i)}\cap \Lambda\simeq \Z^{r_i}\quad\hbox{ with $r_i\ge 2$.}
\end{equation}

We consider a collection of linear forms
$L_\alpha=\log (\alpha)$, $\alpha\in \Delta_{\gamma_0}$, on $\Z^r$
that defines the negative Weyl chamber
$$
C^-=\{a\in \Lambda\otimes \R:\, L_\alpha(a)< 0\hbox{ for all }\alpha\in \Delta_{\gamma_0}\}
$$
that contains $\gamma_0^\ell$.
Since $\Lambda$ is a lattice in $\Lambda\otimes \R$, there exists $\delta_0\in \Lambda$ such that 
\begin{equation}
\label{eq:weyl}
\alpha(\delta_0)\le \alpha_0(\delta_0)<0\quad\hbox{ for every $\alpha\in \Delta_{\gamma_0}$.}
\end{equation}
This, in particular, implies that $\delta_0$ has no components of finite order. 
It follows from \eqref{eq:higher rank} that there exists $\delta_1\in \Lambda$ such that
$\delta_0$ and $\delta_1$ are multiplicatively independent.
We consider the subgroup $\Lambda_0=\left<\delta_0,\delta_1\right>\simeq \Z^2$ of $\Lambda$.
We note that
$C^-\cap (\Lambda_0\otimes \R)$
defines a non-trivial cone in $\Lambda_0\otimes \R\simeq \R^2$.
It follows from Lemma \ref{prasad-rapinchuk} that $\hbox{ker}(L_\alpha|_{\Lambda_0})=0$, and 
forms $L_{\alpha_1}|_{\Lambda_0}$ and $L_{\alpha_2}|_{\Lambda_0}$ are proportional only when the roots $\alpha_1$ and $\alpha_2$ are proportional.
In particular, every non-trivial element of $\Lambda_0$ is $\R$-regular.
It follows from \eqref{eq:weyl} that the line $L_{\alpha_0}=0$ gives one of the faces of the cone $C^-\cap (\Lambda_0\otimes \R)$. Then there exists a sequence $\delta_n\in C^-\cap \Lambda_0$ such that $\delta_n\to \infty$ and $L_{\alpha_0}(\delta_n)\to 0$. 
Moreover, it clear that $|L(\delta_n)|\to \infty$ for any linear from $L$ on $\Lambda_0\otimes \R$ which is not proportional to $L_{\alpha_0}|_{\Lambda_0\otimes \R}$. In particular, 
$|L_{\alpha}(\delta_n)|\to \infty$ for any $\alpha\in \Delta_{\gamma_0}\backslash \{\alpha_0\}$.  Since $\delta_n\in C^-\cap \Lambda_0$, it follows that $L_{\alpha}(\delta_n)\to- \infty$ 
for all $\alpha\in \Delta_{\gamma_0}\backslash \{\alpha_0\}$. 
This also implies that $L_{\alpha}(\delta_n)\to- \infty$ 
for all $\alpha\in \Phi_{\gamma_0}^-$
that are not proportional to $\alpha_0$ and proves the lemma.
\end{proof}

\vspace{0.5cm}

Now we apply the above results to the Zariski dense subsemigroup $S$ of $\Gamma$
constructed in Proposition \ref{manysinks}.
We recall that for  every element $\gamma \in S$, the map $\rho (\gamma)$ has a differentiable sink in $M$. 
We fix $\gamma _0 \in S$ as in Lemma \ref{prasad-rapinchuk} that determines the ordering on $\Phi_{\gamma_0}$ satisfying \eqref{eq:ordering} and the corresponding set of simple roots $\Delta_{\gamma_0}\subset \Phi_{\gamma_0}$.
% and a root $\alpha_0\in  \Delta_{\gamma_0}$ such that $\alpha_0\notin \Phi_{\gamma_0}^I$.
%and a sequence $\delta^{(\alpha_0)}_n \in Z_{\Gamma}(\gamma _0)$ as in Lemma \ref{l:roots}.

It will be convenient to work with the Cartan decomposition \eqref{eq:cartan} which is 
compatible with $Z_{\bf G}(\gamma_0)^\circ$.
We choose a Cartan involution $\theta$ so that the torus $Z_{\bf G}(\gamma_0)^\circ$ is $\theta$-invariant.
Then 
$$
{\bf B}_{\gamma_0}(\R)^\circ\subset K\quad\hbox{and}\quad \hbox{Lie}({\bf T}_{\gamma_0}(\R))\subset \mathfrak{p}.
$$
Since ${\bf T}_{\gamma_0}$ is a maximal $\R$-split torus,
$A={\bf T}_{\gamma_0}(\R)^\circ$ gives a Cartan subgroup of $G$.
We abuse notation and identify the root system $\Phi_{\gamma_0}$ of ${\bf T}_{\gamma_0}$
with the root system of $\mathfrak{a}$ introduced in Section \ref{sec:flag}. 

Let $I\subset \Delta_{\gamma_0}$ and $\alpha_0\in \Delta_{\gamma_0}\backslash I$.
We recall that $U_I=N_I^-P_I$ denotes  the open cell in the flag manifold $F_I=G/P_I$.
We also denote by $\mathfrak{n}_I^{(\alpha_0)}$ the subalgebra of $\mathfrak{n}^-_I$
generated by the root spaces $\mathfrak{g}_\alpha$ with $\alpha\in\Phi^-_{\gamma_0}\backslash \Phi^I_{\gamma_0}$
which are proportional to $\alpha_0$, and define 
$$
N_I^{(\alpha_0)}=\exp\left(\mathfrak{n}_I^{(\alpha_0)}\right)\quad\hbox{and}\quad U_I^{(\alpha_0)}=N_I^{(\alpha_0)} P_I.
$$
The $U_I^{(\alpha_0)}$ is a non-trivial submanifold of the open cell $U_I$. Let 
$$
\pi^{(\alpha_0)}_0: U_I =\exp(\mathfrak{n}_I^-)P_I\to U_I^{(\alpha_0)}=\exp\left(\mathfrak{n}_I^{(\alpha_0)}\right)P_I
$$
be the natural projection map.

\begin{lemma}\label{l:proj}
	Let $\gamma_0\in \Gamma$ be as in Lemma \ref{prasad-rapinchuk}.
Then there exists a sequence $\delta^{(\alpha_0)}_n \in \Gamma\cap Z_{\bf G}(\gamma_0)^\circ$ consisting of commuting $\R$-regular elements and satisfying
\begin{equation}
\label{eq:alpha_1}
\alpha_0(\delta_n^{(\alpha_0)})\to 1,
\end{equation}
and 
\begin{equation}
\label{eq:alpha_2}
\alpha (\delta_n^{(\alpha_0)}) \rightarrow 0\quad\hbox{for all $\alpha\in  \Phi_{\gamma_0}^-$ that are not proportional to $\alpha_0$}
\end{equation}
such that the  projection map $\pi _0^{(\alpha_0)} $ is the limit of the maps $\rho _0 (\delta^{(\alpha_0)} _n)$ acting on $U_I\subset F_I$.

In particular, for every $g\in N_I^{(\alpha_0)}$ and $x\in U_I$,
\begin{equation}
\label{eq:comm}
\pi _0^{(\alpha_0)}(gx)= g\, \pi _0^{(\alpha_0)}(x),
\end{equation}
and for every $\gamma\in \Gamma\cap Z_{\bf G}(\gamma_0)^\circ$ and $x\in U_I$,
\begin{equation}
\label{eq:comm2}
\pi _0^{(\alpha_0)}(\rho_0(\gamma)x)= \rho_0(\gamma)\, \pi _0^{(\alpha_0)}(x).
\end{equation}
\end{lemma}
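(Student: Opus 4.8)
The plan is to take for $\delta^{(\alpha_0)}_n$ (abbreviated $\delta_n$) the sequence furnished by Lemma~\ref{l:roots}, after the refinement discussed below, to write $\rho_0(\delta_n)$ explicitly on the open cell $U_I=\exp(\mathfrak n^-_I)P_I$ by means of the action formula \eqref{eq:action}, and then to pass to the limit. First I would fix the Cartan data compatibly with $\gamma_0$, exactly as in the paragraph preceding the lemma (Cartan involution $\theta$ with $Z_{\bf G}(\gamma_0)^\circ$ $\theta$-stable, $A={\bf T}_{\gamma_0}(\R)^\circ=\exp(\mathfrak a)$, the ordering of $\Phi_{\gamma_0}$ given by \eqref{eq:ordering}); after the conjugations of Section~\ref{sec:flag} this makes $s_{\gamma_0}=eP_I$, puts the hyperbolic part $h_n$ of $\delta_n$ in $\exp(\mathfrak a^+)$, and puts its elliptic part $b_n$ in $M=Z_K(A)$, so in particular $\delta_n\in MA\subset P_I$ and $\rho_0(\delta_n)$ preserves $U_I$. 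Then, for $x=\sum_{\alpha\in\Phi^-_{\gamma_0}\setminus\Phi^I_{\gamma_0}}x_\alpha\in\mathfrak n^-_I$, formula \eqref{eq:action} reads
\[
\rho_0(\delta_n)\big(\exp(x)P_I\big)=\exp\Big(\sum_{\alpha}\alpha(\delta_n)\,\mathrm{Ad}(b_n)\,x_\alpha\Big)P_I,
\]
where $\mathrm{Ad}(b_n)$ preserves each root space $\mathfrak g_\alpha$ and acts there as an isometry.

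Letting $n\to\infty$: the roots $\alpha\in\Phi^-_{\gamma_0}\setminus\Phi^I_{\gamma_0}$ proportional to $\alpha_0$ (that is, $-\alpha_0$ and, when it is a root, $-2\alpha_0$) span $\mathfrak n^{(\alpha_0)}_I$, and for these $\alpha(\delta_n)\to1$ by \eqref{eq:alpha_1}, whereas $\alpha(\delta_n)\to0$ for the remaining ones by \eqref{eq:alpha_2}. Granting that $\mathrm{Ad}(b_n)\to\mathrm{Id}$ on $\mathfrak n^{(\alpha_0)}_I$, the displayed expression thus converges, uniformly for $x$ in compact sets, to $\exp(x^{(\alpha_0)})P_I$, where $x^{(\alpha_0)}$ is the $\mathfrak n^{(\alpha_0)}_I$-component of $x$; equivalently $\rho_0(\delta_n)\to\pi^{(\alpha_0)}_0$ uniformly on compact subsets of $U_I$, which is the main claim. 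Formula \eqref{eq:comm} then follows from the same computation: for $g\in N^{(\alpha_0)}_I$ one has $\delta_n g\delta_n^{-1}=\exp(\mathrm{Ad}(\delta_n)\log g)\to g$, since $\mathrm{Ad}(\delta_n)$ acts on $\mathfrak n^{(\alpha_0)}_I$ by the scalars $\alpha(\delta_n)\to1$ composed with $\mathrm{Ad}(b_n)\to\mathrm{Id}$; hence
\[
\pi^{(\alpha_0)}_0(gx)=\lim_n\rho_0(\delta_n)(gx)=\lim_n\rho_0(\delta_n g\delta_n^{-1})\big(\rho_0(\delta_n)x\big)=g\,\pi^{(\alpha_0)}_0(x).
\]
For \eqref{eq:comm2}, every $\gamma\in\Gamma\cap Z_{\bf G}(\gamma_0)^\circ$ lies in the abelian group $Z_{\bf G}(\gamma_0)^\circ(\R)\subset Z_G(A)=MA\subset P_I$, so $\rho_0(\gamma)$ preserves $U_I$ and commutes with every $\rho_0(\delta_n)$; passing to the limit in this commutation yields \eqref{eq:comm2}.

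The step that requires real work is the refinement used above: the elliptic part $b_n$ of a generic element of $\Gamma\cap Z_{\bf G}(\gamma_0)^\circ$ has infinite order, so a priori $\mathrm{Ad}(b_n)$ only subconverges, to an element of the compact torus ${\bf B}_{\gamma_0}(\R)^\circ$ that may act nontrivially by an isometry of $\mathfrak n^{(\alpha_0)}_I$; then the limit above is $\pi^{(\alpha_0)}_0$ post-composed with that isometry rather than $\pi^{(\alpha_0)}_0$ itself, and \eqref{eq:comm} breaks. To obtain the limit on the nose I would revisit the construction in Lemma~\ref{l:roots}: within each $\Q$-simple factor the lattice there has rank $r_i\ge2$ (this is where the higher-rank hypothesis is essential), which leaves a free direction along the wall $L_{\alpha_0}=0$; using it one can carry out a simultaneous Diophantine adjustment that keeps the hyperbolic parts approaching that wall---so that \eqref{eq:alpha_1}, \eqref{eq:alpha_2} still hold---while steering the elliptic parts, which commute and hence lie in a common maximal torus of the orthogonal group of $\mathfrak n^{(\alpha_0)}_I$, back to the identity on $\mathfrak g_{-\alpha_0}\oplus\mathfrak g_{-2\alpha_0}$; any residual finite ``sign'' ambiguity is removed by replacing $\delta_n$ by a fixed power. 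When $\alpha_0$ has multiplicity one in $\mathfrak g$ this last step is automatic after squaring; the general case is where the care is needed.
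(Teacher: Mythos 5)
Your setup coincides with the paper's: you take the sequence of Lemma \ref{l:roots}, split each $\delta_n$ into its elliptic part $k_n\in{\bf B}_{\gamma_0}(\R)$ and hyperbolic part $a_n\in{\bf T}_{\gamma_0}(\R)$, write the action on $U_I$ through \eqref{eq:action}, and observe (correctly) that the only obstruction to $\rho_0(\delta_n)\to\pi_0^{(\alpha_0)}$ is that $\hbox{Ad}(k_n)$ restricted to $\mathfrak{n}_I^{(\alpha_0)}$ may only subconverge to a nontrivial isometry $\omega$, in which case the limit is $\omega\circ\pi_0^{(\alpha_0)}$ and \eqref{eq:comm} fails. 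The derivations of \eqref{eq:comm} and \eqref{eq:comm2} from the main convergence are also fine. The problem is that your resolution of the one genuine difficulty is not an argument. You propose to go back into Lemma \ref{l:roots} and use the rank $r_i\ge 2$ of the lattice in each $\Q$-simple factor to make a ``simultaneous Diophantine adjustment'' that keeps the hyperbolic parts near the wall $L_{\alpha_0}=0$ while ``steering'' the elliptic parts to the identity. But the elliptic part of a lattice element of $Z_{\bf G}(\gamma_0)^\circ(\R)\simeq{\bf B}_{\gamma_0}(\R)\cdot{\bf T}_{\gamma_0}(\R)$ is determined by the element up to the finite group $\Gamma\cap{\bf B}_{\gamma_0}$ (the kernel of the projection to the split part is finite), so there is no free parameter to adjust independently of the hyperbolic part; the extra rank in the split directions gives you no handle on the compact factor. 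Your fallback, ``replacing $\delta_n$ by a fixed power,'' only removes a finite-order ambiguity; it does nothing when $\hbox{Ad}(k_n)$ accumulates on a rotation of infinite order, which is exactly the general case you concede is ``where the care is needed.''

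The missing idea, and the paper's actual proof, is a power trick with the quantifiers in the right order: pass to a subsequence with $\hbox{Ad}(k_n)|_{\mathfrak{n}_I}\to\omega$ in the compact abelian group $\Omega$; by recurrence in $\Omega$ choose, for each $j$, an exponent $\ell_j$ with $d(\omega^{\ell_j},\mathrm{id})<j^{-1}$; then choose $n_j$ (after $\ell_j$) so large that $d(\hbox{Ad}(k_{n_j}),\omega)<(\ell_j j)^{-1}$, whence $d(\hbox{Ad}(k_{n_j}^{\ell_j}),\mathrm{id})<j^{-1}$, and so that raising $a_{n_j}$ to the power $\ell_j$ still leaves $\alpha_0$ close to $1$ and the other negative roots close to $0$. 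The sequence $\delta_j^{(\alpha_0)}=\delta_{n_j}^{\ell_j}$ then stays in $\Gamma\cap Z_{\bf G}(\gamma_0)^\circ$, consists of commuting $\R$-regular elements, and converges to $\pi_0^{(\alpha_0)}$ on the nose. Note that this uses nothing about higher rank beyond what Lemma \ref{l:roots} already provides; the rank hypothesis enters in producing the original sequence $\delta_n$, not in taming the elliptic parts. As written, your proposal has a genuine gap at precisely the step the lemma exists to handle.
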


\begin{proof}
Let $\delta_n \in \Gamma\cap Z_{\bf G}(\gamma_0)^\circ$
be the sequence constructed in Lemma \ref{l:roots}.
Since $\gamma _0$ is regular and $\R$-regular, its centralizer satisfied \eqref{eq:centraliser},
so that we can write $\delta_n = k_n a_n$ with $k_n\in {\bf B}_{\gamma_0}(\R)$
and $a_n\in {\bf T}_{\gamma_0}(\R)$. For every $v\in \mathfrak{n}_I$,
$$
\rho_0(\delta_n) \exp(v)P_I=\exp(\hbox{Ad}(\delta_n)v)P_I=\exp(\hbox{Ad}(a_n)\hbox{Ad}(k_n)v)P_I.
$$
It follows from properties of the sequence $\delta_n$ that 
$\alpha_0(a_n)\to 1$ and $\alpha(a_n)\to 0$ for every $\Phi_{\gamma_0}^-\backslash \Phi_{\gamma_0}^I$, so that for every $\ell\ge 1$ and $w\in \mathfrak{n}_I$,
\begin{equation}
\label{eq:a_n}
\hbox{Ad}(a^\ell_n)w\to p_0^{(\alpha_0)}(w),
\end{equation}
where $p_0^{(\alpha_0)}:\mathfrak{n}_I^-\to \mathfrak{n}_I^{(\alpha_0)}$ is the natural projection map.
Moreover, this convergence is uniform over $\ell \ge 1$ and $w$ in compact sets.
We observe that the transformations $\hbox{Ad}(k_n)|_{\mathfrak{n}_I}$ belong to a compact abelian group $\Omega< \hbox{GL}(\mathfrak{n}_I)$.
Hence, passing to a subsequence, we may assume that $\hbox{Ad}(k_n) |_{\mathfrak{n}_I} \rightarrow \omega$ for some $\omega\in \Omega$.
For every $j\ge 1$,
there exists $\ell_j\ge 1$ such that 
$$
d(\omega^{\ell_j}, id) < j^{-1},
$$
and there exists $n_j\ge 1$ such that 
$$
d( \hbox{Ad}(k_{n_j})|_{\mathfrak{n}_I}, \omega) < (\ell_j j)^{-1}.
$$
Hence, it follows that 
$$
d( \hbox{Ad}(k_{n_j}^{\ell_j})|_{\mathfrak{n}_I}, id) < j^{-1}.
$$
Combining this estimate with \eqref{eq:a_n}, we deduce that for 
$v\in \mathfrak{n}_I$,
$$
\hbox{Ad}(\delta_{n_j}^{\ell_j})v=\hbox{Ad}(a^{\ell_i}_{n_j})\hbox{Ad}(k^{\ell_j}_{n_j})v\to p_0^{(\alpha_0)}(v).
$$
Hence, the required sequence can be taken to be $\delta_j^{(\alpha_0)}=\delta_{n_j}^{\ell_j}$.
This proves the first part of the lemma. The second part (equation \eqref{eq:comm})
also follows because for $v\in \mathfrak{n}^{(\alpha_0)}_I$ and $x\in U_I$,
$$
\rho_0(\delta^{(\alpha_0)}_n) \exp(v)x=\exp(\hbox{Ad}(\delta^{(\alpha_0)}_n)v) \,\rho_0(\delta^{(\alpha_0)}_n) x\to \exp(v)\,\pi_0^{(\alpha_0)}(x).
$$
The last claim (equation \eqref{eq:comm2}) is immediate as well because $Z_{\bf G}(\gamma_0)^\circ$ is commutative.
This completes the proof.
\end{proof}

Next we show that there exist dynamically defined projection maps on $M$ which are analogues of the projection maps $\pi_0^{(\alpha_0)}$. 
We recall that $\rho(\gamma_0)$ has a differentiable sink $s\in M$.
It is also clear that $s_0=eP_I$ is the unique sink of the $\R$-regular element $\gamma_0$.
Since the $C^0$-conjugacy $\phi:F_I\to M$ intertwines $\rho_0(\gamma_0)$ and $\rho(\gamma_0)$,
it follows that $\phi(s_0)=s$.

\begin{lemma}\label{l:proj2}
There exists a neighbourhood $\mathcal{O}_0$ of the sink $s$ such that 
%$\mathcal{O}_0$ is invariant under $\rho (\gamma_0)$ and $\rho (\delta^{(\alpha_0)}_n)$, and 
$\rho (\delta^{(\alpha_0)}_n) |_{\mathcal{O}_{0}}$ converges to a smooth map $\tilde{\pi}^{(\alpha_0)}_0 : \mathcal{O}_0 \rightarrow M$ satisfying
\begin{equation}
\label{eq:pi1}
\phi \circ \pi^{(\alpha_0)}_0 = \tilde{\pi}^{(\alpha_0)} _0 \circ \phi
\end{equation}
on $\phi^{-1}(\mathcal{O}_0)$.
\end{lemma}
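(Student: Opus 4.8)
The plan is to transport the convergence statement from Lemma \ref{l:proj} through the conjugacy $\phi$, and then to promote the resulting continuous limit to a smooth one by exploiting the differentiable sink at $s$. First I would fix the neighbourhood: since $s$ is a differentiable sink for $\rho(\gamma_0)$, choose a Riemannian metric near $s$ in which $\rho(\gamma_0)$ is a uniform contraction, and take $\mathcal{O}_0$ to be a small ball about $s$ with $\overline{\mathcal{O}_0}$ contained in the contracting chart and also contained in $\phi(U_I)$ (possible because $\phi(s_0)=s$ and $U_I$ is an open neighbourhood of $s_0$). The elements $\delta_n^{(\alpha_0)}$ lie in $Z_{\bf G}(\gamma_0)^\circ$, hence commute with $\gamma_0$, so $\rho(\delta_n^{(\alpha_0)})$ commutes with $\rho(\gamma_0)$ and therefore each $\rho(\delta_n^{(\alpha_0)})$ fixes $s$ and — after shrinking $\mathcal{O}_0$ and passing to a power of $\gamma_0$ if needed — maps $\mathcal{O}_0$ into $\mathcal{O}_0$, since $\rho(\delta_n^{(\alpha_0)})\,\mathcal{O}_0 = \rho(\delta_n^{(\alpha_0)})\rho(\gamma_0)^k(\text{something})$ and the $\gamma_0$-contraction forces the image to stay near $s$. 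More carefully: using that $\pi_0^{(\alpha_0)}$ is the limit of $\rho_0(\delta_n^{(\alpha_0)})$ on $U_I$ and that $\phi$ is a homeomorphism intertwining the actions, the maps $\rho(\delta_n^{(\alpha_0)}) = \phi\circ\rho_0(\delta_n^{(\alpha_0)})\circ\phi^{-1}$ converge uniformly on $\phi^{-1}(\mathcal{O}_0)$, hence on $\mathcal{O}_0$, to the continuous map $\tilde\pi_0^{(\alpha_0)} := \phi\circ\pi_0^{(\alpha_0)}\circ\phi^{-1}$, which immediately gives \eqref{eq:pi1}.

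The substantive point is upgrading $C^0$-convergence to $C^\infty$-convergence of the limit. Here I would use the contraction at the sink together with an invariance relation. Since $\delta_n^{(\alpha_0)}$ commutes with $\gamma_0$, we have $\rho(\gamma_0)^k \circ \rho(\delta_n^{(\alpha_0)}) = \rho(\delta_n^{(\alpha_0)}) \circ \rho(\gamma_0)^k$ for all $k$. Taking $k\to\infty$ and using that $\rho(\gamma_0)^k \to (\text{const }s)$ in $C^\infty$ on $\mathcal{O}_0$ — which holds because $s$ is a differentiable sink, so the iterates of a contraction converge to the constant map in every $C^r$ norm on a slightly smaller ball by standard estimates on derivatives of compositions of contractions — one gets strong control on the derivatives of $\rho(\delta_n^{(\alpha_0)})$: writing $D\rho(\delta_n^{(\alpha_0)})_x = D\rho(\gamma_0)^{-k}_{\rho(\gamma_0)^k\rho(\delta_n^{(\alpha_0)})x}\, D\rho(\delta_n^{(\alpha_0)})_{\rho(\gamma_0)^k x}\, D\rho(\gamma_0)^k_x$ and absorbing the $\gamma_0$-factors, the family $\{\rho(\delta_n^{(\alpha_0)})\}$ is bounded in $C^r(\mathcal{O}_0')$ for every $r$ on a smaller ball $\mathcal{O}_0'$. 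By Arzelà–Ascoli, a subsequence converges in $C^{r-1}$; since the $C^0$-limit is already known to be $\tilde\pi_0^{(\alpha_0)}$, the full sequence converges in $C^{r-1}$ for every $r$, so $\tilde\pi_0^{(\alpha_0)}$ is $C^\infty$ and $\rho(\delta_n^{(\alpha_0)})\to\tilde\pi_0^{(\alpha_0)}$ in $C^\infty$ on $\mathcal{O}_0'$; relabel $\mathcal{O}_0'$ as $\mathcal{O}_0$.

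The main obstacle I anticipate is making the uniform $C^r$-boundedness of $\{\rho(\delta_n^{(\alpha_0)})\}$ genuinely rigorous. The naive chain-rule estimate above needs care because the points $\rho(\gamma_0)^k x$ and $\rho(\gamma_0)^k \rho(\delta_n^{(\alpha_0)}) x$ at which one evaluates high derivatives of $\rho(\gamma_0)^{\pm k}$ must stay in a fixed compact set where those derivatives are controlled, and one must check that the $C^r$-norms of $\rho(\gamma_0)^{-k}$ restricted to the shrinking images do not blow up — this is exactly where the differentiable sink hypothesis \eqref{eq:diff_contr} enters, via the fact that on the stable manifold the contraction rates dominate polynomial derivative growth (the Faà di Bruno expansion of $D^r(f^k)$ is a sum of products of at most $r$ derivatives of $f$ with a total of $k$ factors, and each factor contributes a bound $<1$ in the adapted metric). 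A clean way to organize this is to invoke the standard fact that near a differentiable sink the iterates $\rho(\gamma_0)^k$ converge to the constant map $s$ in $C^r_{\mathrm{loc}}$ for every $r$, together with the fact that $\rho(\delta_n^{(\alpha_0)})(\mathcal{O}_0)$ accumulates only near $s$ (which follows from \eqref{eq:comm2} and $\phi(s_0)=s$, since $\pi_0^{(\alpha_0)}$ maps into $U_I^{(\alpha_0)}$, a fixed submanifold through $s_0$, and the $\gamma_0$-dynamics contracts a neighbourhood of $U_I^{(\alpha_0)}\cap\mathcal{O}_0$ toward $s_0$), so the evaluation points stay in a fixed compact neighbourhood of $s$ on which all relevant derivatives are bounded. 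Once that bookkeeping is in place, Arzelà–Ascoli finishes the argument.
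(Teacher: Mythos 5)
Your setup, the $C^0$-part, and the equivariance identity \eqref{eq:pi1} are fine and agree with the paper: since $\phi$ intertwines $\rho_0(\delta^{(\alpha_0)}_n)$ with $\rho(\delta^{(\alpha_0)}_n)$ and $\pi_0^{(\alpha_0)}$ is the $C^0$-limit of the former (Lemma \ref{l:proj}), the maps $\rho(\delta^{(\alpha_0)}_n)$ converge uniformly near $s$ to $\phi\circ\pi_0^{(\alpha_0)}\circ\phi^{-1}$. The gap is in the smoothness upgrade. Your uniform $C^r$-bound via the identity $D\rho(\delta_n)_x=D\rho(\gamma_0)^{-k}\cdot D\rho(\delta_n)_{\rho(\gamma_0)^k x}\cdot D\rho(\gamma_0)^k_x$ does not go through: first, the outer factors cannot be ``absorbed'', because $\|D(\rho(\gamma_0)^{-k})\|\cdot\|D(\rho(\gamma_0)^{k})\|$ grows roughly like $(\chi_{\max}/\chi_{\min})^{k}$, where $\chi_{\min}<\chi_{\max}<1$ are the extreme moduli of eigenvalues of $D\rho(\gamma_0)_s$; a differentiable sink is in general not conformal, and \eqref{eq:diff_contr} gives no control on this ratio. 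Second, even ignoring that, the middle factor is a derivative of $\rho(\delta_n)$ at a point near $s$, and a uniform-in-$n$ bound on $D\rho(\delta_n)$ near $s$ is precisely what you are trying to establish; $C^0$-convergence alone gives no derivative control, so the argument is circular. Consequently the Arzel\`a--Ascoli step has nothing to feed on.

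The paper closes exactly this gap with the Guysinsky--Katok normal form theorem (Proposition \ref{normal form}): in a suitable coordinate chart around the differentiable sink, $\rho(\gamma_0)$ \emph{and every diffeomorphism commuting with it} --- in particular all $\rho(\delta^{(\alpha_0)}_n)$ --- become polynomial maps of subresonance type of a fixed bounded degree, lying in a finite-dimensional group $SR_\chi$. A $C^0$-limit of polynomials of uniformly bounded degree on an open set is again a polynomial of that degree, hence smooth; this replaces your attempted derivative estimates. The subresonance polynomial structure is precisely the device that handles the unequal contraction rates on which the naive chain-rule/conjugation estimate founders, and it is a genuinely nontrivial input that your sketch would need to either invoke or reprove.
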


To prove this lemma, we need to use the theory of polynomial normal forms for smooth diffeomorphisms, which we now recall. We refer to \cite{G,GK} for more details.
We note that we only require normal forms   at a differentiable sink, rather then the more elaborate theory of contractions on vector bundle extensions developed in \cite{G,GK}.  However, we are not aware of a simpler reference for our case.

Let $f$ be a diffeomorphism with a differentiable sink $s$,
$\chi_1,\dots\,\chi_l$ the different moduli of eigenvalues of $D(f) _{s}$, and  $m_1,\dots,m_l$
their multiplicities.  
We represent the tangent space $T_s(M) \simeq \mathbb R ^m$ as the
direct sum of the spaces  $\mathbb R ^{m_i},\dots, \mathbb R ^{m_l}$, and let
$(t_1,\dots,t_l)$ be the corresponding coordinate representation of
a vector $t \in \mathbb R^n$. Let 
$$
P: \R ^m \to \R ^m:\,\,(t_1,\dots,t_l)\mapsto
(P_1(t_1,\dots,t_l),\dots,P_l(t_1,\dots,t_l))
$$
be a polynomial
map preserving the origin. We will say that the map $P$ is of
\emph{subresonance type} if it contains only homogeneous terms
in $P_i(t_1,\dots,t_l)$ with degree of homogeneity $s_j$ in the
coordinates of $t_j,\,\,i=1,\dots,l$, for which the  subresonance
relations
$$
\chi_i \le \sum_{j\neq i}s_j\chi_j
$$
hold. There are only
finitely many subresonance relations and it is known (see \cite{G,GK}) that
polynomial maps of the subresonance type with invertible derivative
at the origin generate a finite-dimensional Lie group. We will denote
this group by $SR_{\chi}$. The polynomial maps of subresonance type provide 
convenient normal forms of the diffeomorphism $f$ and its centraliser.

\begin{proposition}[\cite{G,GK}] \label{normal form}
%	Let $\gamma$ be a diffeomorphism on a manifold $M$ with fixed point $p \in M$.  Suppose $p$ is a differentiable sink.  Then
	There exists a coordinate chart $\omega:\mathcal{O}\to \mathcal{O}'\subset \R^m$ around the sink $s$ for which $\omega\circ f\circ \omega^{-1}$  is a polynomial map of subresonance type 	contained in the group $SR_{\chi}$.
	Moreover, this  coordinate chart transforms into such
	a normal form in $SR_{\chi}$ any diffeomorphism which commutes with $f$.
\end{proposition}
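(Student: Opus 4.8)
The plan is to recover this via the classical theory of $C^\infty$ normal forms at a contracting fixed point (Sternberg, in the sharpened form of Guysinsky--Katok): a formal, jet-by-jet elimination of all non-resonant terms, followed by a genuinely $C^\infty$ step that absorbs the (infinite) tail, and finally a parallel induction that pins down the centraliser. To set up, I would first choose an arbitrary $C^\infty$ chart sending $s$ to $0$ and a linear change of coordinates putting $A:=D(f)_s$ into block form with respect to the splitting $\R^m=\R^{m_1}\oplus\cdots\oplus\R^{m_l}$, the block $A_i$ on $\R^{m_i}$ having all eigenvalues of modulus $\chi_i$; fix a norm adapted to $A$, so that $\|A\|<1$. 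In these coordinates $f(t)=At+\sum_{|\vec s|\ge 2}f_{\vec s}(t)$, a Taylor expansion organised by multidegree $\vec s=(s_1,\dots,s_l)$, where $f_{\vec s}$ is homogeneous of degree $s_j$ in the coordinates of $t_j$.

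Next comes the formal elimination. Conjugating $f$ by $\id+P_{\vec s}$, with $P_{\vec s}$ homogeneous of multidegree $\vec s$, changes the multidegree-$\vec s$ component of $f$ by the homological operator $\mathcal L_{\vec s}(P)=A\circ P-P\circ A$ on the finite-dimensional space of such maps. Since $A$ is block-diagonal, $\mathcal L_{\vec s}$ respects the decomposition of this space according to the target factors $\R^{m_i}$, and on the $\R^{m_i}$-component its eigenvalues are the numbers $\lambda-\mu$, where $\lambda$ is an eigenvalue of $A_i$ and $\mu$ is a product of $|\vec s|$ eigenvalues of $A$ selected according to $\vec s$, so that $|\mu|=\prod_j\chi_j^{s_j}$. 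Hence $\mathcal L_{\vec s}$ is invertible on the $\R^{m_i}$-component whenever $\chi_i\neq\prod_j\chi_j^{s_j}$, i.e.\ whenever $(\vec s,i)$ is not a resonance. In particular it is invertible on every component that is not of subresonance type (a nonlinear resonance $\chi_i=\prod_j\chi_j^{s_j}$ automatically satisfies the subresonance relation), and, because $\|A\|<1$, it is invertible for all but finitely many multidegrees altogether. Running the elimination over the finitely many $\vec s$ with $|\vec s|\le N$, for a threshold $N$ chosen below, puts $f$ in the form $\hat f+R$, where $\hat f$ is a polynomial of degree $\le N$ whose nonlinear terms are all resonant — hence of subresonance type, and with invertible linear part $A$, so $\hat f\in SR_\chi$ — and $R$ vanishes to order $N+1$ at $0$.

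Removing the tail is the one substantive analytic step, and the part I expect to be the main obstacle. Choosing $N$ large enough relative to the moduli $\chi_i$, the relative $C^\infty$ linearisation theorem for contractions applies: two $C^\infty$ germs of contractions with the same $N$-jet at the fixed point are conjugate by a $C^\infty$ germ tangent to the identity to order $N$. Applying this to $f$ and to its truncation $\hat f$ yields a final $C^\infty$ change of coordinates after which $f$ equals exactly $\hat f$. Composing all the changes of coordinates produces the chart $\omega:\mathcal O\to\mathcal O'\subset\R^m$ of the proposition, with $\omega\circ f\circ\omega^{-1}=\hat f\in SR_\chi$. Everything before this step is formal algebra (the homological equation, the finiteness of the subresonance relations, the group structure of $SR_\chi$); what makes this step work is that the spectral gap of $A$ has been exhausted by making the $N$-jet sufficiently flat, so that a contraction-mapping (or homotopy) argument on the space of candidate conjugacies converges.

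Finally the centraliser. Let $g$ be a $C^\infty$ diffeomorphism with $g\circ f=f\circ g$ and $g(s)=s$ (automatic in our application; in general $g$ permutes the differentiable sinks of $f$, and one restricts to the sink in question). Then $\hat g:=\omega\circ g\circ\omega^{-1}$ is a $C^\infty$ germ at $0$ commuting with $\hat f\in SR_\chi$, with invertible linear part $B=D(g)_s$. Suppose $\hat g$ is not of subresonance type, and let $\vec s$ be the least multidegree — ordered by total degree, then lexicographically — at which $\hat g$ carries a homogeneous component $R$ into some $\R^{m_i}$ with $(\vec s,i)$ not of subresonance type, hence not resonant. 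Expand $\hat g\circ\hat f=\hat f\circ\hat g$ into homogeneous pieces of multidegree $\vec s$ and restrict to the $\R^{m_i}$-component: the terms involving $(\hat f)_{\vec s}$ drop out because $(\hat f)_{\vec s}$ has no $\R^{m_i}$-component ($(\vec s,i)$ is not resonant); the terms produced by substituting nonlinear parts of $\hat f$ into lower-multidegree parts of $\hat g$ (subresonant by minimality of $\vec s$), or lower-multidegree parts of $\hat g$ into $\hat f$, are compositions of subresonance-type maps, hence again of subresonance type, so they contribute nothing to this component; what survives is $\mathcal L_{\vec s}$ applied to $R$. Since $\mathcal L_{\vec s}$ is invertible on this component, $R=0$, a contradiction. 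Therefore $\hat g$ is a polynomial of subresonance type with invertible linear part, i.e.\ $\hat g\in SR_\chi$, which is exactly the assertion that the chart $\omega$ transforms every diffeomorphism commuting with $f$ into normal form. The main bookkeeping in this last paragraph is verifying that the composite contributions genuinely land in subresonance type, which is where the group structure of $SR_\chi$ is used.
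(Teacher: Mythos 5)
The paper itself gives no proof of this proposition --- it is quoted from \cite{G,GK} --- so your argument has to stand on its own, and its first half does: the reduction to a block-diagonal linear part, the homological operator $\mathcal L_{\vec s}(P)=A\circ P-P\circ A$ with eigenvalues $\lambda-\mu$, the observation that non-subresonant (in particular non-resonant) components can be killed and that only finitely many multidegrees escape this, and the appeal to the relative $C^\infty$ linearization theorem for contractions to absorb the flat tail, is exactly the standard Sternberg/Guysinsky--Katok route. (One small point you use silently: when you restrict the commutation relation to the $\R^{m_i}$-component you need $B=D(g)_s$ to be block-diagonal; this is true, but only because $B$ commutes with $A$ and the blocks $\R^{m_i}$ are the sums of generalized eigenspaces of $A$ singled out by eigenvalue modulus, so it deserves a line.)

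The genuine gap is in the centraliser step. Your induction on multidegrees shows that every homogeneous \emph{Taylor} component of $\hat g=\omega\circ g\circ\omega^{-1}$ violating a subresonance relation vanishes, i.e.\ that the Taylor series of $\hat g$ at $0$ is a subresonance polynomial $Q$. From this you conclude ``therefore $\hat g$ is a polynomial of subresonance type,'' but a $C^\infty$ germ is not determined by its Taylor series: a priori $\hat g=Q+R$ with $R$ flat at the fixed point, and flatness is precisely what your jet-by-jet homological argument cannot see. You cannot reuse your relative linearization theorem here either, since $g$ need not be a contraction (in the paper's application the commuting maps $\rho(\delta_n^{(\alpha_0)})$ are certainly not). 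The missing idea is a dynamical one: from $\hat g=\hat f^{-n}\circ\hat g\circ\hat f^{\,n}$ one estimates, on a small ball, that the flat remainder contributes at most
\begin{equation*}
\mathrm{Lip}\bigl(\hat f^{-n}\bigr)\cdot\sup\bigl|R(\hat f^{\,n}x)\bigr|\;\lesssim\;C_\delta\,(1+\delta)^n\chi_l^{-n}\cdot C_k\,\chi_1^{nk}\longrightarrow 0
\end{equation*}
for $k$ large, so $\hat g$ is the locally uniform limit of the maps $\hat f^{-n}\circ Q\circ\hat f^{\,n}$, which are subresonance polynomials of uniformly bounded degree by the group property of $SR_\chi$; hence $\hat g$ itself is such a polynomial and $R=0$. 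Without some argument of this kind the second assertion of the proposition --- the one the paper actually uses, namely that the commuting diffeomorphisms become polynomials of bounded degree in the chart --- is not established.
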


With the help of this proposition, we prove Lemma \ref{l:proj2}.

\begin{proof}[Proof of Lemma \ref{l:proj2}] By Proposition \ref{normal form}, there is a neighbourhood $\mathcal{O}$ of $s=\phi (s_{0})$ on which we have normal forms for
every diffeomorphism that commutes with $\rho(\gamma _0)$ and, in particular,
for the diffeomorphisms $\rho (\delta^{(\alpha_0)}_n)$.
Replacing $\mathcal{O}$ by a neighbourhood $\mathcal{O}_0\subset \mathcal{O}$, we can assume that 
$\rho(\gamma_0)$ and $\rho (\delta^{(\alpha_0)}_n)$ map $\mathcal{O}_0$ into $\mathcal{O}$.  
Indeed, this follows from properties of the sequence
$\delta^{(\alpha_0)}_n$ (see \eqref{eq:alpha_1}--\eqref{eq:alpha_2}).
Then we define $\tilde{\pi}^{(\alpha_0)}_0 $ on $\mathcal{O}_0$  as the limit of $\rho (\delta^{(\alpha_0)}_n)$. 
It is clear that this limit exists in $C^0$-topology because $\phi$ intertwines the map $\rho (\delta^{(\alpha_0)} _n)$ with the map $\rho _0(\delta^{(\alpha_0)} _n)$.  
Moreover, since the maps $\rho (\delta^{(\alpha_0)}_n)|_{\mathcal{O}_0}$ are polynomials of bounded degree in the normal form coordinates, $\tilde{\pi}^{(\alpha_0)}_0 $ is also a polynomial of the same degree, hence smooth.  The equivariance property is immediate from the equivariance of $\phi$ 
with respect to the actions $\rho$ and $\rho _0$.   
\end{proof}

\begin{proposition}  \label{equivariant projection}
The local projection map $\tilde{\pi}^{(\alpha_0)}_0:\mathcal{O}_0\to M$,
defined in Lemma \ref{l:proj2}, extends to  a smooth map 
$\tilde{\pi}^{(\alpha_0)}_0:\phi(U_I)\to M$ so that
\begin{equation}
\label{eq:pi2}
\phi \circ \pi^{(\alpha_0)}_0 = \tilde{\pi}^{(\alpha_0)} _0 \circ \phi
\end{equation}
on $U_I$.
\end{proposition}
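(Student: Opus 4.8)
The plan is to define the extension by the only reasonable formula, namely $\tilde\pi^{(\alpha_0)}_0 := \phi\circ \pi^{(\alpha_0)}_0 \circ \phi^{-1}$ on the open set $\phi(U_I)\subset M$, and then to upgrade its continuity to smoothness by spreading the local smoothness of Lemma \ref{l:proj2} across all of $\phi(U_I)$ using the contracting dynamics of $\rho(\gamma_0)$. With this definition the intertwining relation \eqref{eq:pi2} holds tautologically on $U_I$, and the map is continuous since $\phi$ is a homeomorphism and $\pi^{(\alpha_0)}_0$ is visibly smooth (in the coordinates $\mathfrak n_I^-\cong U_I$ it is the linear projection onto $\mathfrak n_I^{(\alpha_0)}$). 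After shrinking the neighbourhood $\mathcal O_0$ of Lemma \ref{l:proj2} so that $\phi^{-1}(\mathcal O_0)\subset U_I$, relation \eqref{eq:pi1} says precisely that this $\tilde\pi^{(\alpha_0)}_0$ agrees on $\mathcal O_0$ with the smooth map constructed there. So everything reduces to proving that $\tilde\pi^{(\alpha_0)}_0$ is smooth on all of $\phi(U_I)$.

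For this I would use that $\gamma_0$ is $\R$-regular with sink $s_0=eP_I\in U_I$, so that (Section \ref{sec:flag}, cf.\ \eqref{eq:action}) $\rho_0(\gamma_0)$ maps the open cell $U_I$ onto itself and $\gamma_0^n z\to s_0$ for every $z\in U_I$. Set $V_n:=\rho(\gamma_0)^{-n}(\mathcal O_0)\cap\phi(U_I)$, an open subset of $\phi(U_I)$. If $m=\phi(x)$ with $x\in U_I$, then $\rho(\gamma_0)^n m=\phi(\gamma_0^n x)\to\phi(s_0)=s$, so $m\in V_n$ for all large $n$; hence $\{V_n\}_{n\ge 1}$ is an open cover of $\phi(U_I)$. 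On $V_n$ I claim
\begin{equation*}
\tilde\pi^{(\alpha_0)}_0=\rho(\gamma_0)^{-n}\circ\bigl(\tilde\pi^{(\alpha_0)}_0|_{\mathcal O_0}\bigr)\circ \rho(\gamma_0)^{n}.
\end{equation*}
Indeed, for $m=\phi(x)\in V_n$ one has $\gamma_0^n x\in U_I$ and $\phi(\gamma_0^n x)\in\mathcal O_0$, and chaining the identity $\tilde\pi^{(\alpha_0)}_0|_{\mathcal O_0}=\phi\circ\pi^{(\alpha_0)}_0\circ\phi^{-1}$, the $\gamma_0$-equivariance \eqref{eq:comm2} (legitimate since $\gamma_0^n\in\Gamma\cap Z_{\bf G}(\gamma_0)^\circ$), and the $\Gamma$-equivariance of $\phi$ gives $\rho(\gamma_0)^{-n}\tilde\pi^{(\alpha_0)}_0(\rho(\gamma_0)^n m)=\phi(\pi^{(\alpha_0)}_0(x))=\tilde\pi^{(\alpha_0)}_0(m)$. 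Since $\rho$ is a smooth action, $\rho(\gamma_0)^{\pm n}$ are diffeomorphisms of $M$, and $\tilde\pi^{(\alpha_0)}_0|_{\mathcal O_0}$ is smooth by Lemma \ref{l:proj2}; therefore $\tilde\pi^{(\alpha_0)}_0$ is smooth on each $V_n$, hence on $\phi(U_I)$, which proves the proposition.

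I expect the only genuinely delicate point to be the bookkeeping that makes the displayed conjugation identity legitimate: one must know that $\gamma_0$ preserves the open cell $U_I$ (so that the iterates $\gamma_0^n x$ stay in the domain of $\pi^{(\alpha_0)}_0$), that the $\gamma_0$-equivariance \eqref{eq:comm2} is actually available — which is exactly why $\gamma_0$ and the approximating sequence $\delta^{(\alpha_0)}_n$ were all chosen inside the commutative torus $Z_{\bf G}(\gamma_0)^\circ$ — and that the limit map from Lemma \ref{l:proj2} really does coincide on $\mathcal O_0$ with $\phi\circ\pi^{(\alpha_0)}_0\circ\phi^{-1}$, so that the two descriptions of $\tilde\pi^{(\alpha_0)}_0$ can be glued. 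Granting these, smoothness simply propagates outward from the sink by conjugation with the contraction $\rho(\gamma_0)$.
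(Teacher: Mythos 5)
Your proposal is correct and is essentially the paper's own argument: the paper likewise extends $\tilde{\pi}^{(\alpha_0)}_0$ from $\mathcal{O}_0$ to $\phi(U_I)$ by the conjugation formula $\tilde{\pi}^{(\alpha_0)}_0=\rho(\gamma_0)^{-n}\circ\tilde{\pi}^{(\alpha_0)}_0\circ\rho(\gamma_0)^{n}$ on sets pushed into $\mathcal{O}_0$ by the contraction, with consistency and \eqref{eq:pi2} coming from \eqref{eq:comm2}, \eqref{eq:pi1} and the $\Gamma$-equivariance of $\phi$. Your reorganization (defining the map as $\phi\circ\pi^{(\alpha_0)}_0\circ\phi^{-1}$ so that \eqref{eq:pi2} is tautological and then verifying the local conjugation identity to get smoothness) is only a cosmetic rearrangement of the same steps.
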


\begin{proof}
We can extend the smooth projection map $\tilde{\pi}^{(\alpha_0)}_0$ defined
on the neighbourhood of the sink $\mathcal{O}_0$ to the whole $\phi(U_I)$ using conjugation by $\rho(\gamma _0)$.  More precisely,
given any compact subset $\Omega$ of $\phi(U_I)$, there exists $n$ such that  
$\rho(\gamma _0) ^n (\Omega) \in \mathcal{O}_0$.  Then we set 
$$
\tilde{\pi}^{(\alpha_0)} _0 (x) = \rho(\gamma _0)^{-n} \,\tilde{\pi}^{(\alpha_0)} _0 (\rho( \gamma _0)^nx), \quad x\in \Omega.
$$
It follows from \eqref{eq:comm2} and \eqref{eq:pi1} that the definition of 
$\tilde{\pi}^{(\alpha_0)} _0$
is consistent with $\tilde{\pi}^{(\alpha_0)} _0$ on $\mathcal{O}_0$ and is independent of
the choice of $n$. This allows us to extend $\tilde{\pi}^{(\alpha_0)} _0$ to $\phi(U_I)$ 
so that \eqref{eq:pi2} holds.
\end{proof}

\begin{lemma}  \label{l:manifold}
There exists $\gamma \in \Gamma\cap Z_{\bf G}(\gamma_0)^\circ$ such that
 $\phi (U_I^{(\alpha _0)})$ in a neighbourhood of of the sink $s$ is equal to the strong stable manifold of $\rho(\gamma)$.  In particular, $\phi (U_I^{(\alpha _0)})$ is an immersed submanifold.
\end{lemma}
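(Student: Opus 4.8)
The plan is to take $\gamma$ of the form $\gamma=\gamma_0\,(\delta_n^{(\alpha_0)})^{-1}$ with $n$ large, where $\delta_n^{(\alpha_0)}\in\Gamma\cap Z_{\bf G}(\gamma_0)^\circ$ is the sequence produced in Lemma \ref{l:proj}, and to identify $\phi(U_I^{(\alpha_0)})$ near $s$ with the local stable manifold of $\rho(\gamma)$ by a linear computation carried out in the polynomial normal-form chart at $s$. First I would record the elementary observations. Since $Z_{\bf G}(\gamma_0)^\circ$ normalises every $\mathfrak a$-root space it normalises $\mathfrak n_I^{(\alpha_0)}$ and $N_I^{(\alpha_0)}$, so $U_I^{(\alpha_0)}=N_I^{(\alpha_0)}P_I$ is invariant under $\rho_0(g)$ for every $g\in\Gamma\cap Z_{\bf G}(\gamma_0)^\circ$, with tangent space $\mathfrak n_I^{(\alpha_0)}$ at $s_0=eP_I$; hence $N:=\phi(U_I^{(\alpha_0)})$ is invariant under $\rho(g)$ for all such $g$. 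By Proposition \ref{equivariant projection}, $N$ is a smooth immersed submanifold of $M$ — which is already the last assertion of the lemma — and by \eqref{eq:pi2} one has $\tilde\pi^{(\alpha_0)}_0=\phi\circ\pi^{(\alpha_0)}_0\circ\phi^{-1}$ on $\phi(U_I)$, so $\tilde\pi^{(\alpha_0)}_0$ is a smooth idempotent with image $N$. Therefore $\Pi:=D(\tilde\pi^{(\alpha_0)}_0)_s$ is a linear projection of $T_sM$ onto $T_sN$, and I write $T_sM=T_sN\oplus K$ with $K=\ker\Pi$.

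Next I would analyse linearisations at $s$. Fix the chart of Proposition \ref{normal form} around $s$ in which $\rho(\gamma_0)$ and every commuting diffeomorphism is a polynomial of subresonance type of uniformly bounded degree; in particular each $\rho(\delta_n^{(\alpha_0)})$, and (once it is chosen) $\rho(\gamma)$, are such polynomials. Since $\rho(\delta_n^{(\alpha_0)})\to\tilde\pi^{(\alpha_0)}_0$ in $C^0$ near $s$ (Lemma \ref{l:proj2}) and these are polynomials of bounded degree, the convergence is in $C^\infty$, so $D(\rho(\delta_n^{(\alpha_0)}))_s\to\Pi$. The maps $\rho(\gamma_0)$, $\rho(\delta_n^{(\alpha_0)})$ and $\tilde\pi^{(\alpha_0)}_0$ pairwise commute — they lie in the abelian group generated by $\rho$ of the torus $Z_{\bf G}(\gamma_0)^\circ$, and $\tilde\pi^{(\alpha_0)}_0$ is a limit of such maps — so their derivatives at $s$ pairwise commute; in particular $D(\rho(\gamma_0))_s$ and $D(\rho(\delta_n^{(\alpha_0)}))_s$ commute with $\Pi$ and therefore preserve both $T_sN$ and $K$. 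Now set $\gamma=\gamma_0\,(\delta_n^{(\alpha_0)})^{-1}\in\Gamma\cap Z_{\bf G}(\gamma_0)^\circ$. On $T_sN$ one has $\Pi|_{T_sN}=\mathrm{id}$, hence $D(\rho(\delta_n^{(\alpha_0)}))_s|_{T_sN}\to\mathrm{id}$ and $D(\rho(\gamma))_s|_{T_sN}=D(\rho(\gamma_0))_s|_{T_sN}\circ\big(D(\rho(\delta_n^{(\alpha_0)}))_s|_{T_sN}\big)^{-1}\to D(\rho(\gamma_0))_s|_{T_sN}$, whose spectral radius is $<1$ because $s$ is a differentiable sink of $\rho(\gamma_0)$; so for $n$ large $D(\rho(\gamma))_s$ is a contraction on $T_sN$. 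On $K$ one has $\Pi|_K=0$, so the eigenvalue moduli of $D(\rho(\delta_n^{(\alpha_0)}))_s|_K$ tend to $0$, while those of $D(\rho(\gamma_0))_s|_K$ stay bounded below by a positive constant; since these commuting operators are simultaneously triangularisable over $\mathbb C$, the eigenvalue moduli of $D(\rho(\gamma))_s|_K=D(\rho(\gamma_0))_s|_K\circ\big(D(\rho(\delta_n^{(\alpha_0)}))_s|_K\big)^{-1}$ tend to $+\infty$, so for $n$ large $D(\rho(\gamma))_s$ expands $K$. Thus for all large $n$ the point $s$ is a hyperbolic fixed point of $\rho(\gamma)$ with stable subspace $T_sN$ and unstable subspace $K$; fix such an $n$.

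It then remains to identify the manifolds. The submanifold $N$ is smooth; it is $\rho(\gamma)$-invariant, since $\rho_0(\gamma)$ maps $U_I^{(\alpha_0)}$ onto itself (because $\mathrm{Ad}(\gamma)$ preserves $\mathfrak n_I^{(\alpha_0)}$) and $\phi$ is a conjugacy; it is tangent at $s$ to the stable subspace $T_sN$; and $\rho(\gamma)|_N$ is a contraction near $s$ because $D(\rho(\gamma))_s|_{T_sN}$ has spectral radius $<1$, so $\rho(\gamma)^m\to s$ uniformly on a neighbourhood of $s$ in $N$. By uniqueness of the local stable manifold of a hyperbolic fixed point, $N$ coincides near $s$ with $W^s_{\mathrm{loc}}(\rho(\gamma),s)$, and since $D(\rho(\gamma))_s$ has no eigenvalue of modulus one this is precisely the strong stable manifold of $\rho(\gamma)$, which proves the lemma.

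The main obstacle is that $\phi$ is merely a homeomorphism and a priori transports no information about derivatives at $s$: all the differentiable control has to be manufactured from the polynomial normal forms of Proposition \ref{normal form} together with the degeneration $D(\rho(\delta_n^{(\alpha_0)}))_s\to\Pi$, and the delicate point is to convert this degeneration into the precise hyperbolic splitting $T_sM=T_sN\oplus K$ for $D(\rho(\gamma))_s$. A secondary matter to verify carefully is that the commutation relations among $\rho(\gamma_0)$, the $\rho(\delta_n^{(\alpha_0)})$ and $\tilde\pi^{(\alpha_0)}_0$ survive differentiation at $s$, and that $\Pi$ is indeed the projection onto $T_sN$ along a subspace invariant under all the relevant linear maps.
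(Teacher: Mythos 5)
Your argument is correct, but it proves the lemma by a genuinely different route than the paper. The paper picks $\gamma$ in the lattice of the split torus with $\alpha_0$ the only contracted simple root, so that $U_I^{(\alpha_0)}$ is visibly the strong stable manifold of $\rho_0(\gamma)$ at $s_0$, and then works purely topologically through $\phi$: a point near $s$ whose forward $\rho(\gamma)$-orbit stays near $s$ must come from $U_I^{(\alpha_0)}$, and conversely the factorisation $\rho(\gamma^{n_0})^n=\rho(\gamma_0)^n\rho(\gamma^{n_0}\gamma_0^{-1})^n$ borrows the exponential contraction of the differentiable sink of $\rho(\gamma_0)$ to show points of $\phi(U_I^{(\alpha_0)})$ converge exponentially fast; the submanifold structure is then read off from the strong stable manifold theorem. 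You instead take $\gamma=\gamma_0(\delta_n^{(\alpha_0)})^{-1}$ and do a linear computation at $s$ in the normal-form chart, using the degeneration $D(\rho(\delta_n^{(\alpha_0)}))_s\to\Pi$ (legitimate: $C^0$-convergence of polynomials of bounded degree is $C^\infty$-convergence) and commutativity to produce the hyperbolic splitting $T_sM=T_sN\oplus K$, and you get the submanifold structure of $N=\phi(U_I^{(\alpha_0)})$ independently from the idempotent $\tilde\pi_0^{(\alpha_0)}$. Your version buys more -- genuine hyperbolicity of $D(\rho(\gamma))_s$ with an explicit invariant splitting, and an identification of the stable subspace as $\operatorname{im}(D(\tilde\pi_0^{(\alpha_0)})_s)$ -- at the cost of leaning harder on the normal forms; the paper's version needs no derivative information beyond the single contraction estimate at the sink of $\rho(\gamma_0)$.

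One point of bookkeeping: Proposition \ref{equivariant projection} does not itself assert that $\phi(U_I^{(\alpha_0)})$ is a submanifold -- it only gives smoothness of the extended map $\tilde\pi_0^{(\alpha_0)}$ on $\phi(U_I)$. The actual justification is the one you state next, namely that $\tilde\pi_0^{(\alpha_0)}$ is a smooth idempotent of the open set $\phi(U_I)$ with image $N$, together with the standard (but not completely trivial) fact that the image of a smooth retraction is an embedded submanifold; you should either cite or prove that fact, since the whole point of this lemma in the paper is to establish the submanifold structure of $\phi(U_I^{(\alpha_0)})$. With that supplied, the rest of your argument (invariance of $T_sN$ and $K$ under the commuting derivatives, the eigenvalue estimates on each factor, and the uniqueness of the local stable manifold) goes through.
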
 

\begin{proof}   
	As we already observed in the proof of Lemma \ref{l:roots},
	$\Gamma\cap Z_{\bf G}(\gamma_0)^\circ$ is a lattice subgroup in 
	$Z_{\bf G}(\gamma_0)^\circ(\R)$. Its projection to ${\bf T}_{\gamma_0}(\R)$ is also a lattice. Since the set of simple roots $\Delta_{\gamma_0}$ forms a basis of the dual space of ${\bf T}_{\gamma_0}(\R)^\circ$. It follows that there exists $\gamma\in \Gamma\cap Z_{\bf G}(\gamma_0)^\circ$ such that $\alpha_0(\gamma)<1$ and $\alpha(\gamma)>1$
	for all $\alpha\in \Delta_{\gamma_0}\backslash \{\alpha_0\}$. Then 
	$U_I^{(\alpha _0)}$ is the strong stable manifold of $\rho_0(\gamma)$ at $s_0=\phi^{-1}(s)$.
	Suppose $y$ is contained in a small neighbourhood of $s=\phi(s_0)$ in $M$, and it has its forward $\rho(\gamma)$-orbit in this neighbourhood. Let $y =\phi(x)$ for some $x \in U_I$.  Then the forward orbit $\rho_0 (\gamma) ^n x$ stays in  a small neighborhood of $ s_{0}$.  
	In particular, it follows that $x\in U_I^{(\alpha_0)}$ and $y\in \phi(U_I^{(\alpha_0)})$
	. Since $\rho _0 (\gamma)$ is purely hyperbolic at $ s_{0}$, we see that $\rho_0(\gamma) ^n y$ converges to $s_{0}$ exponentially fast.  Hence, for some fixed sufficiently large $n_0$,  the transformation $\rho _0 (\gamma ^{n_0} \gamma _0 ^{-1})$ also contracts $x$ to  $ s_{0}$.  Since $\phi$ is continuous, it follows that 
	$\rho(\gamma ^{n_0} \gamma _0 ^{-1})^n y=\rho(\gamma _0)^{-n}\rho(\gamma ^{n_0})^n  y $ converges to $s=\phi (s_{0})$.  On the other hand,  $s$ is a differentiable sink of 
	$\rho (\gamma _0)$, so that we conclude that $\rho(\gamma^{n_0}) ^n y$ must converge to $s$ exponentially fast. This proves that $\phi(U_I^{(\alpha_0)})$ is equal to 
	the strong stable manifold of $\rho(\gamma^{n_0})$ in a neighbourhood of $s$.
	In particular, we also derive that $\phi(U_I^{(\alpha_0)})$ is a submanifold 
	in a neighbourhood of $s$. We deduce that $\phi(U_I^{(\alpha_0)})$ is an immersed
	submanifold using the action of $\rho(\gamma_0)^{-1}$.
\end{proof}

\section{Smoothness along foliations}
\label{s:foliations}

We keep the notation from the previous section and continue our investigation
of the conjugacy map $\phi:F_I\to M$. In this section, we establish smoothness of $\phi$ restricted to
open dense subsets of submanifolds $U_I^{(\alpha_0)}$ of $F_I$. The higher rank assumption
on $G$ is crucial here.

Let $H_I^{(\alpha_0)}$ be a connected component of the closure of $\rho_0(Z_{\bf G}(\gamma _0)^\circ \cap \Gamma) |_{U_I^{(\alpha _0)}}$. The latter 
can be identified with the closure of the linear group $\hbox{Ad}(Z_{\bf G}(\gamma _0)^\circ \cap \Gamma) |_{\mathfrak{n}_I^{(\alpha _0)}}$. It follows from Lemma \ref{prasad-rapinchuk}
that $H_I^{(\alpha_0)}$ is not trivial.  Since $H_I^{(\alpha_0)}$ acts freely away from the fixed point $s_0$, we get a foliation of $U_I^{(\alpha _0)}\backslash \{s_0\}$ by $H_I^{(\alpha_0)}$-orbits. 
We also note the action of $H_I^{(\alpha_0)}$ commutes with the action of $\rho_0(\gamma_0)$.

\begin{proposition}    \label{smoothness along subfoliations}
The conjugacy map $\phi:F_I\to M$ restricted to $U_I^{(\alpha _0)}$
is $C ^{\infty}$ on an open dense subset.	
\end{proposition}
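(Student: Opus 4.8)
The plan is to localise near the sink $s=\phi(s_0)$ (here $s_0=eP_I$ is the unique sink of $\gamma_0$ on $F_I$) and to bootstrap the smoothness of the polynomial normal forms at the sink into smoothness of $\phi$, first along the leaves of the $H_I^{(\alpha_0)}$-foliation and then transversally; the transverse step is where the higher-rank hypothesis is genuinely needed.

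First I would fix normal-form coordinates and identify the structure group. In the exponential chart $\exp\colon\mathfrak{n}_I^{(\alpha_0)}\to U_I^{(\alpha_0)}$, every $\rho_0(\delta)$ with $\delta\in Z_{\bf G}(\gamma_0)^\circ\cap\Gamma$ acts as the linear map $\hbox{Ad}(\delta)|_{\mathfrak{n}_I^{(\alpha_0)}}$, and $\rho_0(\gamma_0)$ as a linear contraction. Since $\rho(\gamma_0)$ has a differentiable sink at $s$, Proposition \ref{normal form} gives a chart near $s$ in which $\rho(\gamma_0)$ and every diffeomorphism commuting with it — in particular every $\rho(\delta)$, $\delta\in Z_{\bf G}(\gamma_0)^\circ\cap\Gamma$ — is a polynomial map of subresonance type in the finite-dimensional Lie group $SR_\chi$, and in which $\phi(U_I^{(\alpha_0)})$ is a smooth $\rho(\gamma_0)$-invariant immersed submanifold by Lemma \ref{l:manifold}. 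Because $C^0$-convergence of polynomial maps of bounded degree on an open set forces convergence of coefficients, the $C^0$- and $C^\infty$-closures of $\{\rho(\delta)|_{\mathcal{O}_0}:\delta\in Z_{\bf G}(\gamma_0)^\circ\cap\Gamma\}$ inside $SR_\chi$ agree; let $\tilde H$ be its identity component, a Lie group acting smoothly and preserving $\phi(U_I^{(\alpha_0)})$. Conjugating by $\phi$ the $C^0$-closure of $\rho_0(Z_{\bf G}(\gamma_0)^\circ\cap\Gamma)|_{U_I^{(\alpha_0)}}$ (whose elements are all linear, with identity component exactly $H_I^{(\alpha_0)}$) identifies $H_I^{(\alpha_0)}$ with $\tilde H$ and shows that $\phi$ intertwines the linear $H_I^{(\alpha_0)}$-action with the smooth $\tilde H$-action.

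Next I would prove smoothness of $\phi$ along the leaves. On $U_I^{(\alpha_0)}\backslash\{s_0\}$, where $H_I^{(\alpha_0)}$ acts freely, the orbits foliate; a leaf $L=H_I^{(\alpha_0)}\cdot x$ is a homogeneous space of $H_I^{(\alpha_0)}$, $\phi(L)=\tilde H\cdot\phi(x)$ is an immersed submanifold of $M$, and $\phi|_L$ is an $H_I^{(\alpha_0)}$-equivariant homeomorphism between homogeneous spaces of $H_I^{(\alpha_0)}$ with corresponding stabilisers, hence a diffeomorphism. Composing with the smooth inclusion $\phi(L)\hookrightarrow M$ shows that $\phi|_{U_I^{(\alpha_0)}}$ is $C^\infty$ along the leaves of the $H_I^{(\alpha_0)}$-foliation.

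The hard part will be the transverse regularity, since the $H_I^{(\alpha_0)}$-orbits need not fill $U_I^{(\alpha_0)}$. Here I would use Lemma \ref{prasad-rapinchuk}: for a multiplicatively independent pair $\delta_1,\delta_2$ coming from the higher-rank factor, $\alpha_0(\delta_1)$ and $\alpha_0(\delta_2)$ are multiplicatively independent, so the only subresonance relations among the joint moduli of the linear $\mathbb{Z}^2$-action of $\langle\delta_1,\delta_2\rangle$ on $\mathfrak{n}_I^{(\alpha_0)}$ are those internal to the root string $\{\alpha_0,2\alpha_0\}$ — and those are realised linearly in the chart $\exp$ (they come from $[\mathfrak{g}_{-\alpha_0},\mathfrak{g}_{-\alpha_0}]\subset\mathfrak{g}_{-2\alpha_0}$). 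Hence the joint normal form of $\rho(\gamma_0)$ on $\phi(U_I^{(\alpha_0)})$ against all of $\tilde H$ has no genuinely nonlinear term, so $\rho(\gamma_0)|_{\phi(U_I^{(\alpha_0)})}$ is $C^\infty$-conjugate to $\hbox{Ad}(\gamma_0)|_{\mathfrak{n}_I^{(\alpha_0)}}$ by a diffeomorphism intertwining the $\tilde H$-action. Pre-composing $\phi$ with it gives a homeomorphism $h$ of $\mathfrak{n}_I^{(\alpha_0)}$ fixing $0$, conjugating $\hbox{Ad}(\gamma_0)$ to itself, equivariant for the linear $H_I^{(\alpha_0)}$-action (hence positively homogeneous of degree one, as $H_I^{(\alpha_0)}$ contains the homotheties) and $C^\infty$ along $H_I^{(\alpha_0)}$-orbits with derivatives uniformly bounded under iteration of $\hbox{Ad}(\gamma_0)$; a Journ\'{e}-type argument together with equivariance then forces $h$, and so $\phi|_{U_I^{(\alpha_0)}}$, to be $C^\infty$ on a dense open subset of $U_I^{(\alpha_0)}\backslash\{s_0\}$, which is spread to all of it by conjugation with $\rho(\gamma_0)^{-n}$. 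I expect the step of checking that the joint normal form is linear — i.e. that using all of $Z_{\bf G}(\gamma_0)^\circ\cap\Gamma$ kills every resonant term — to be the technical heart of the argument.
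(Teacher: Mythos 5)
Your first two steps (normal forms at the sink, identification of the closure of $\rho(Z_{\bf G}(\gamma_0)^\circ\cap\Gamma)$ with a Lie group of subresonance polynomials, and smoothness of $\phi$ along the $H_I^{(\alpha_0)}$-orbits) match the paper. The gap is in the transverse step, and it is a genuine one: everything you use there comes from the \emph{abelian} group $Z_{\bf G}(\gamma_0)^\circ\cap\Gamma$, and that group simply does not see the directions transverse to its orbits. The group $H_I^{(\alpha_0)}$ is (a closure of) a rank-one scaling part times a compact abelian torus acting on $\mathfrak{n}_I^{(\alpha_0)}$, so its orbits in $U_I^{(\alpha_0)}\setminus\{s_0\}$ typically have positive codimension (e.g.\ for quaternionic groups $\mathfrak{g}_{-\alpha_0}$ has large dimension while the torus orbit is small). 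A homeomorphism of $\mathfrak{n}_I^{(\alpha_0)}$ fixing $0$, commuting with $\hbox{Ad}(\gamma_0)$ and with the whole $H_I^{(\alpha_0)}$-action, can be completely wild on the orbit space: already for a scalar contraction on $\R^2$ with $H=\R_{>0}$, any homogeneous extension of a non-smooth circle homeomorphism commutes with everything. So ``equivariance plus a Journ\'e-type argument'' cannot force smoothness of $h$ transverse to the orbits; Journ\'e-type arguments need a spanning family of foliations along which regularity is known, and your family (the single orbit foliation) does not span. In addition, two auxiliary claims in your sketch are not available: the homogeneity claim fails when $\mathfrak{n}_I^{(\alpha_0)}=\mathfrak{g}_{-\alpha_0}+\mathfrak{g}_{-2\alpha_0}$ (the scaling there has weights $1$ and $2$, there are no homotheties), and the assertion that the joint normal form is linear is unjustified --- the resonance $2\alpha_0=\alpha_0+\alpha_0$ is genuine and cannot be killed by multiplicative independence; moreover even a smooth linearization of the germ of $\rho(\gamma_0)$ at $s$ would not by itself make the particular conjugacy $\phi$ smooth.

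The missing idea, which is the actual content of the paper's proof, is to bring in lattice elements \emph{outside} the centraliser. Using equidistribution of expanding horospherical orbits on $\Gamma\backslash G$ (Lemma \ref{l:mixing}), one finds $\gamma\in\Gamma$ of the form $\gamma=g_i(u_ip_0a_{t_i})^{-1}g_0^{-1}$ returning near a given point, and one transports the orbit foliation by $\rho_0(\gamma)$ and projects it back into $U_I^{(\alpha_0)}$ by $\pi_0^{(\alpha_0)}$; since $\phi$ intertwines $\pi_0^{(\alpha_0)}$ with the smooth projection $\tilde\pi_0^{(\alpha_0)}$ (Proposition \ref{equivariant projection}) and is $\Gamma$-equivariant, $\phi$ is smooth along each new foliation $\mathcal{F}_\gamma(y)=\pi_0^{(\alpha_0)}(\gamma H_I^{(\alpha_0)}y)$. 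The heart of the argument is then showing that finitely many such foliations span $T(U_I^{(\alpha_0)})$ at points of a dense open set: if they did not, a limiting argument (using the $a_{t_i}$-contraction and $\hbox{Ad}(M)$-conjugation) would produce an $\hbox{Ad}(M)$-invariant proper subspace of $\mathfrak{g}_{-\alpha_0}$ or $\mathfrak{g}_{-2\alpha_0}$, contradicting the irreducibility statement of Lemma \ref{l:m} --- and it is in part (ii) of that lemma, for $\mathfrak{g}_{-2\alpha_0}$, that the absence of rank-one factors is used, not in the multiplicative-independence statement you invoke. Without this input from the full lattice and from Lemma \ref{l:m}, your argument cannot control $\phi$ transverse to the $H_I^{(\alpha_0)}$-orbits, so the proposal as it stands does not prove the proposition.
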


In the proof of this proposition, we use a technical lemma which involves 
equidistribution properties of flows on the homogeneous space $\Gamma\backslash G$.
This requires new notation which we now introduce.
Let 
$$
M=Z_K(\mathfrak{a})^\circ.
$$
Let $\{a_t\}$ be a one-parameter subgroup of the Cartan group $A$ such that 
$N_I^+$ is the contracting horospherical subgroup for it, namely,
$$
N_I^+=\{g\in G:\, a_t g a_t^{-1} \to e\}.
$$ 
We write $G$ as an almost direct product 
$$
G=G^{(1)}\cdots G^{(r)},
$$
where 
$G^{(i)}$'s are connected normal subgroup of $G$ such that $\Gamma\cap G^{(i)}$'s are  irreducible
lattices in $G^{(i)}$. Then $(\Gamma\cap G^{(1)})\cdots (\Gamma\cap G^{(r)})$
has finite index in $\Gamma$, and without loss of generality, we may assume that
$$
\Gamma=(\Gamma\cap G^{(1)})\cdots (\Gamma\cap G^{(r)}).
$$
We denote by $G_I$ the product of $G^{(i)}$'s which are contained in $P_I$.

\begin{lemma}\label{l:mixing}
	Given arbitrary  $g_0,g\in G$, $p_0\in MA$, a neighbourhood $\mathcal{O}$ of the identity in $G$, and a neighbourhood $\mathcal{O}'$ of identity in $N^+_I$,
	for all sufficiently large $t$,
	$$
	\Gamma\cap g_0\mathcal{O} G_I (\mathcal{O}'p_0 a_{t})^{-1} g^{-1}\ne \emptyset.
	$$
\end{lemma}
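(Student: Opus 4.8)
The statement is a non-vanishing (non-emptiness) claim for a set of lattice points, and the natural way to establish it is via mixing / equidistribution of the flow $\{a_t\}$ on $\Gamma\backslash G$. First I would rewrite the condition $\Gamma\cap g_0\mathcal{O}\,G_I\,(\mathcal{O}'p_0a_t)^{-1}g^{-1}\neq\emptyset$ in terms of an intersection of translated open sets in $\Gamma\backslash G$: the condition holds if and only if the $\Gamma$-orbit of $g a_t p_0^{-1}(\mathcal{O}')^{-1}$ meets $g_0\mathcal{O}G_I$, i.e., if and only if $\Gamma g_0\mathcal{O}G_I$ and $\Gamma g\,a_t\,p_0^{-1}(\mathcal{O}')^{-1}$ have nonempty intersection when pushed down to $\Gamma\backslash G$. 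So the goal becomes: for all large $t$, the sets $\Omega_1=\pi(g_0\mathcal{O}G_I)$ and $a_{-t}\cdot\Omega_2$ (with $\Omega_2$ a fixed neighbourhood built from $g,p_0,\mathcal{O}'$) meet in $\Gamma\backslash G$, where $\pi:G\to\Gamma\backslash G$ is the projection.

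The key input is the mixing of the $\{a_t\}$-action. Here $\{a_t\}$ is an $\mathbb{R}$-split one-parameter subgroup whose contracting horospherical subgroup is $N_I^+$; the expanding horospherical subgroup contains $N_I^-$, which acts transitively (up to a dense cell) on the complement of $P_I$. Because $\Gamma$ is an irreducible lattice in each simple factor not contained in $P_I$, and $G_I$ absorbs exactly the factors lying inside $P_I$, the flow $\{a_t\}$ is mixing on $\Gamma\backslash G / G_I$ (equivalently, mixing on $\Gamma\backslash G$ relative to the $G_I$-direction, using the Howe–Moore theorem applied factor by factor). I would then use the standard consequence of mixing: for any two nonempty open sets $\Omega_1,\Omega_2$ in $\Gamma\backslash G$ of positive Haar measure, $\mu(a_{-t}\Omega_2\cap\Omega_1)\to \mu(\Omega_1)\mu(\Omega_2)>0$ as $t\to\infty$, hence the intersection is nonempty for all large $t$. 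The one subtlety is that $G_I$ is a noncompact direction, so $\Omega_1=\pi(g_0\mathcal{O}G_I)$ has infinite measure; this is handled by intersecting with a large compact piece $G_I^{(R)}$ of $G_I$ and using that the flow mixes on the finite-volume space $\Gamma\backslash G/G_I$, or equivalently by disintegrating Haar measure along the $G_I$-orbits and applying mixing on the quotient. The element $p_0\in MA$ is harmless since $MA$ normalizes $N_I^+$ and commutes with (or normalizes) $\{a_t\}$ up to bounded distortion, so conjugating $\mathcal{O}'p_0$ by $a_t$ keeps it inside a fixed compact neighbourhood of $p_0$.

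The steps, in order: (1) translate the lattice-point condition into an intersection-of-open-sets statement in $\Gamma\backslash G$; (2) identify $N_I^+$ as the contracting horosphere of $\{a_t\}$ and observe that $MA$-translates and $\mathcal{O}'$ stay in a fixed compact set after conjugation by $a_t$; (3) reduce to mixing of $\{a_t\}$ on $\Gamma\backslash G/G_I$, invoking Howe–Moore on each irreducible factor $G^{(i)}\not\subset P_I$ (the factors inside $P_I$ are precisely those quotiented out by $G_I$, where the flow acts trivially, so mixing there would fail — this is exactly why $G_I$ must be removed); (4) apply the measure-theoretic consequence of mixing to the two positive-measure open sets to conclude nonempty intersection for all large $t$, and translate back. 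The main obstacle is bookkeeping step (3): making precise the sense in which $\{a_t\}$ is mixing "modulo $G_I$" — i.e. correctly disintegrating the Haar measure on $\Gamma\backslash G$ over the locally homogeneous space $\Gamma\backslash G/G_I$ and checking that the contracting/expanding horospherical directions for $\{a_t\}$ genuinely surject onto a neighbourhood of the identity in $G/G_I$, so that the open sets project to open sets of positive measure downstairs. Everything else is a routine application of Howe–Moore plus the standard open-set mixing argument.
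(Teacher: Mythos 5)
Your overall strategy is the same as the paper's: reformulate the lattice condition as an intersection statement in $\Gamma\backslash G$, and drive it with mixing of $\{a_t\}$ ``modulo $G_I$'' (the paper does exactly this, restricting to $G_I$-invariant test functions rather than literally passing to $\Gamma\backslash G/G_I$). Your identification of why $G_I$ must be removed, and your observation that $p_0\in MA$ normalises $N_I^+$, both match the paper.

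There is, however, one genuine gap in step (4). The set you call $\Omega_2$, built from $g\mathcal{O}'p_0$, is \emph{not} an open subset of $\Gamma\backslash G$ of positive measure: $\mathcal{O}'$ is open only in $N_I^+$, so $g\mathcal{O}'p_0$ is a piece of a single $N_I^+$-orbit (translated by $p_0$), a submanifold of positive codimension and Haar measure zero. The statement ``$\mu(\Omega_1\cap \Omega_2 a_t)\to\mu(\Omega_1)\mu(\Omega_2)>0$ for open sets of positive measure'' therefore says nothing here. What is actually needed is the equidistribution of \emph{expanding horospherical translates} $x\mapsto xpa_t$, $p\in N_I^+$ --- a strictly stronger consequence of mixing, obtained by the standard thickening argument: fatten $\mathcal{O}'p_0$ by a small neighbourhood $\mathcal{Q}$ of the identity in the non-expanding directions $MAN_I^-$ (so that $\mathcal{O}'p_0\mathcal{Q}$ is genuinely open), apply open-set mixing there, and then transfer the intersection point back to the horospherical leaf using that $a_t^{-1}\mathcal{Q}a_t$ stays in a small neighbourhood of the identity for $t\ge 0$ and that $G_I$ is normal, so the resulting bounded error is absorbed into $\mathcal{O}G_I$. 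The paper short-circuits this by quoting the equidistribution theorem of Kleinbock--Margulis (their 2.2.1) for $N_I^+$-translates, extended to $G_I$-invariant functions in the reducible case. Without either the citation or the thickening step, your argument as written does not close. (A cosmetic point: the translation acts on the right, so the relevant set is $\Omega_2 a_t$, not $a_{-t}\cdot\Omega_2$.)
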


\begin{proof}
We first assume for simplicity that the lattice $\Gamma$ is irreducible.	
Then this lemma follows directly from well-known equidistribution properties of the expanding horospherical subgroups on the space $\Gamma\backslash G$ (see, for instance, \cite[2.2.1]{Kleinbock-Margulis}): for arbitrary $\psi\in C_c(\Gamma\backslash G)$, $f\in C_c(N^+_I)$ and $x\in \Gamma\backslash G$,
\begin{equation}
\label{eq:mixing}
\int_{N^+_I} f(p)\psi(xpa_t)\, dm_{N^+_I}(p)\to \left(\int_{N^+_I} f\, dm_{N^+_I}\right)\left(\int_{\Gamma\backslash G} \psi\, dm_{\Gamma\backslash G}\right)\quad\hbox{as $t\to\infty$},
\end{equation}
where $m_{N^+_I}$ denotes a Haar measure on $N_I^+$, and $m_{\Gamma\backslash G}$ is the Haar measure on $\Gamma\backslash G$ such that $m_{\Gamma\backslash G}(\Gamma\backslash G)=1$.
%We note that this result is stated in \cite{Kleinbock-Margulis} for 
%$f\in C_c(N^+_I)$, but the same proof also applies to $f\in C_c(P_I)$.
We take nonzero $f\ge 0$ with $\hbox{supp}(f)\subset p_0^{-1}\mathcal{O}'p_0$
(note that $MA$ normalises $N_I^+$), 
nonzero $\psi\ge 0$ with $\hbox{supp}(\psi)\subset \Gamma g_0\mathcal{O}$,
and $x=\Gamma g p_0$. It follows from \eqref{eq:mixing} that 
$$
\int_{N^+_I} f(p)\psi(xpa_t)\, dm_{N^+_I}(p)>0
$$
for all sufficiently large $t$.
This implies that there exists $p\in p_0^{-1}\mathcal{O}'p_0$ such that
$$
\Gamma gp_0 pa_t\in \Gamma g_0\mathcal{O}.
$$
Hence, it follows that 
$$
	\Gamma\cap g_0\mathcal{O}(\mathcal{O}'p_0 a_{t})^{-1} g^{-1}\ne \emptyset.
$$
This proves the lemma under the assumption that $\Gamma$ is irreducible.

Now we discuss the general case. We note that the proof of \eqref{eq:mixing} is based on
the mixing property on $\Gamma\backslash G$: for every $\psi_1,\psi_2\in C_c(\Gamma\backslash G)$,
\begin{equation}
\label{eq:mixing00}
\int_{\Gamma\backslash G} \psi_1(z)\psi_2(z g)\, dm_{\Gamma\backslash G}(z)\to \left(\int_{\Gamma\backslash G} \psi_1\, dm_{\Gamma\backslash G}\right) \left(\int_{\Gamma\backslash G} \psi_2\, dm_{\Gamma\backslash G}\right)
\quad \hbox{as $g\to \infty$ in $G$.}
\end{equation}
 Although \eqref{eq:mixing00} fails in general when the lattice $\Gamma$
is not irreducible, it is true provided that the projection of $g$ to every factor $G^{(i)}$
goes to infinity. We observe that the projection of $a_t$ to every simple factor which is not
contained in $G_I$ goes to infinity as $t\to \infty$. Hence, this flow still satisfies
the mixing property on the space $(\Gamma G_I)\backslash G$, or 
equivalently the mixing property for $G_I$-invariant functions on  the space $\Gamma\backslash G$.
Namely, 
for every $G_I$-invariant $\psi_1,\psi_2\in C_c(\Gamma\backslash G)$,
\begin{equation}
\label{eq:mixing000}
\int_{\Gamma\backslash G} \psi_1(z)\psi_2(z a_t)\, dm_{\Gamma\backslash G}(z)\to \left(\int_{\Gamma\backslash G} \psi_1\, dm_{\Gamma\backslash G}\right) \left(\int_{\Gamma\backslash G} \psi_2\, dm_{\Gamma\backslash G}\right)
\quad \hbox{as $t\to\infty$.}
\end{equation}
This allows us to use the same argument as in
\cite[2.2.1]{Kleinbock-Margulis} to deduce that for a $G_I$-invariant $\psi\in C_c(\Gamma\backslash G)$, $f\in C_c(N^+_I)$ and $x\in \Gamma\backslash G$,
\begin{equation}
\label{eq:mixing}
\int_{N^+_I} f(p)\psi(xpa_t)\, dm_{N^+_I}(p)\to \left(\int_{N^+_I} f\, dm_{N^+_I}\right)\left(\int_{\Gamma\backslash G} \psi\, dm_{\Gamma\backslash G}\right)\quad\hbox{as $t\to\infty$}.
\end{equation}
Now we can finish the proof as in the previous paragraph by taking 
nonzero $G_I$-invariant function $\psi\ge 0$ with $\hbox{supp}(\psi)\subset \Gamma g_0\mathcal{O} G_I$.
We deduce that for all sufficiently large $t$,
there exists $p\in p_0^{-1}\mathcal{O}'p_0$ such that
$$
\Gamma gp_0 pa_t\in \Gamma g_0\mathcal{O}G_I.
$$
This implies the lemma.
\end{proof}

\begin{proof}[Proof of Proposition \ref{smoothness along subfoliations}]
	First, we show that 	
	the conjugacy map $\phi:F_I\to M$ is $C^{\infty}$ along the $H_I^{(\alpha_0)}$-orbits in $U_I^{(\alpha _0)}\backslash\{s_{0}\}$. 
	As before, for a sequence $\gamma_n \in Z_{\bf G}(\gamma _0)^\circ\cap \Gamma$ 
	such that $\rho_0(\gamma_n)|_{U_I^{(\alpha_0)}}$ converges to $h \in H_I^{(\alpha_0)}$,  we get convergence in $C^0$-topology of the maps $\rho (\gamma _n)$ on the intersection of $\phi (U_I^{(\alpha _0)}\backslash \{s_{0}\})$ with a small enough neighbourhood contained in a normal forms coordinate chart of the sink $s=\phi(s_0)$ of $\rho (\gamma_0)$.  Since the normal forms of $\rho(\gamma_n)$'s are polynomials of fixed degree, the limiting map is also a polynomial of the same degree, and hence smooth.  Thus, we get an action of $H_I^{(\alpha_0)}$ on this neighbourhood which intertwines via $\phi$ with the action of $H_I^{(\alpha_0)}$ on $U_I^{(\alpha _0)}\backslash\{s_{0}\}$.  Since $H_I^{(\alpha_0)}$ commutes with $\gamma _0$, we can extend the action to a smooth action on $\phi(U_I^{(\alpha _0)}\backslash\{s_{0}\})$ which is again intertwined with the action of $H_I^{(\alpha_0)}$ on $U_I^{(\alpha _0)}\backslash \{s_{0}\}$ via $\phi$.  
	%Since $H_I^{(\alpha_0)}$ is Zariski connected, it has finitely many connected components as a Lie group.  
	Hence, it follows that $\phi$ is smooth along the $H_I^{(\alpha_0)}$-orbits in $U_I^{(\alpha _0)}\backslash\{s_{0}\}$.  

Our next step is, starting with the orbit folliation of $H_I^{(\alpha_0)}$,
to construct additional smooth foliations $\mathcal{F}_i$ 
defined on open subsets of $U_I^{(\alpha _0)}$.
If we show that for $x\in F_I$, 
\begin{equation}\label{eq:smooth}
\hbox{$\phi|_{\mathcal{F}_i(x)}$ is smooth,}
\end{equation}
and 
\begin{equation}
\label{eq:trans}
T_x (\mathcal{F}_1(x))+\cdots +T_x (\mathcal{F}_r(x))=T_x(U_I^{(\alpha_0)}),
\end{equation}
then it will follow that $\phi$ is smooth in a neighbourhood of $x$ in $U_I^{(\alpha_0)}$.
These new foliations are constructed as 
\begin{equation}
\label{eq:f_g}
\mathcal{F}_\gamma (y)=\pi_0^{(\alpha_0)}(\gamma H^{(\alpha _0)}_I y),\quad y\in U_I^{(\alpha _0)}\backslash \{s_0\},
\end{equation}
for suitable $\gamma\in\Gamma$.
We note that since $\phi$ is $\Gamma$-equivariant and also equivariant with respect to
$\pi^{(\alpha_0)}_0$ and $\tilde{\pi}^{(\alpha_0)}_0$ (see Lemma \ref{equivariant projection}), it is clear that \eqref{eq:smooth} holds. Hence, the main task is to arrange the transversality property \eqref{eq:trans}. We construct such foliations inductively.

We take arbitrary $g_0\in N_I^{(\alpha_0)}\backslash \{e\}$ 
and a neighbourhood $\mathcal{O}$ of identity in $G$ and consider a distribution 
$\mathcal{E}$ in $T(F_I)$ defined on the neighbourhood $\pi^{(\alpha_0)}_0(g_0\mathcal{O}P_I)$ of $g_0 P_I$ in $U_I^{(\alpha_0)}$ 
and contained in the tangent distribution of $U_I^{(\alpha_0)}$.
We start with $\mathcal{E}$ being the tangent distribution
to the orbit foliation $H_I^{(\alpha_0)}x$, $x\in U_I^{(\alpha_0)}\backslash \{s_0\}$.
If $\dim(U_I^{(\alpha_0)})=\dim(H_I^{(\alpha_0)})$, then the proposition follows 
from smoothness of $\phi$ along the orbit foliation, so that
we assume that $\mathcal{E}$ is not equal
to the tangent distribution of for $U_I^{(\alpha_0)}$.
For $g$ in the neighbourhood of $g_0$ in $N_I^{(\alpha_0)}$, we 
consider a family of subspaces 
$$
\mathcal{V}(g)=D(g)^{-1}_{eP_I}\mathcal {E}(gP_I)
$$
of the tangent space $T_{eP_I}(F_I)$. 
We make the identification $T_{eP_I}(F_I)\simeq \mathfrak{g}/\mathfrak{p}_I$,
so that $D(p)_{eP_I}=\hbox{Ad}(p)$ for $p\in P_I$.

We recall that $N_I^{(\alpha_0)}=\exp(\mathfrak{n}_I^{(\alpha_0)})$,
where $\mathfrak{n}_I^{(\alpha_0)}$ is the span of root spaces for negative roots
proportional to $\alpha_0$. It follows from the properties of root systems that either 
$$
N_I^{(\alpha_0)}=\exp(\mathfrak{g}_{-\alpha_0})\quad\hbox{or}\quad
N_I^{(\alpha_0)}=\exp(\mathfrak{g}_{-\alpha_0}+\mathfrak{g}_{-2\alpha_0}).
$$
We treat these two cases separately.

\vspace{0.5cm}

\noindent \underline{Case 1}: $N_I^{(\alpha_0)}=\exp(\mathfrak{g}_{-\alpha_0})$.
Then $N_I^{(\alpha_0)}$ is commutative, and 
$$
\mathcal{E}(g_0 P_I)\subseteq T_{g_0 P_I}(\exp(\mathfrak{g}_{-\alpha_0})g_0 P_I),
$$
so that
$$
\mathcal{V}(g_0) \subseteq D(g_0)_{eP_I}^{-1}T_{g_0 P_I}(\exp(\mathfrak{g}_{-\alpha_0})g_0 P_I)
=T_{eP_I}(\exp(\mathfrak{g}_{-\alpha_0})P_I)=\mathfrak{g}_{-\alpha_0}+\mathfrak{p}_I.
$$
In particular, it follows that $\hbox{Ad}(N_I^+)$ acts trivially on $\mathcal{V}(g_0)$, and $\hbox{Ad}(A)$ acts on $\mathcal{V}(g_0)$ by scalar multiples.
By Lemma \ref{l:mixing}, 
given arbitrary $p_0\in MA$, 
a neighbourhood $\tilde{\mathcal{O}}\subset \mathcal{O}$ of identity in $G$,
a neighbourhood $\mathcal{O}'$ of identity in $N_I^+$
and sufficiently large $t$, 
there exists 
$\gamma \in \Gamma$ such that 
\begin{equation}
\label{eq:mixing2}
\gamma\in g_0\tilde{\mathcal{O}} G_I(\mathcal{O}'p_0 a_{t})^{-1} g_0^{-1}.
\end{equation}
In particular, it follows that $\gamma g_0P_I\in g_0\mathcal{O}P_I$.
We claim that there exists $\gamma\in\Gamma$ such that
\begin{equation}
\label{eq:gamma}
\gamma g_0P_I\in g_0\mathcal{O}P_I\quad \hbox{and} \quad D(\pi^{(\alpha_0)}_0\circ \gamma)_{g_0P_I}\mathcal {E}(g_0P_I)\nsubseteq \mathcal {E}(\pi^{(\alpha_0)}_0(\gamma g_0 P_I)).
\end{equation}
It follows from \eqref{eq:mixing2} that
given arbitrary $p_0\in M$,
there exists a sequence $\gamma_i\in \Gamma$ such that
\begin{equation}
\label{eq:gamma_i}
\gamma_i=g_i(u_i p_0 a_{t_i})^{-1}g_0^{-1}\quad\quad\hbox{with}\;\; g_i P_I\to g_0 P_I,\;\;\;\; u_i\in N_I^+,\; u_i\to e,\;\;\;\; t_i\to \infty.
\end{equation}
Then $\gamma_i g_0 P_I=g_iP_I\to g_0P_I$, so that for sufficiently large $i$, we have 
$\gamma_i g_0 P_I\in g_0\mathcal{O}P_I$.
Suppose that for those $i$'s, 
we have 
$$
D(\pi^{(\alpha_0)}_0\circ \gamma_i)_{g_0 P_I}\mathcal {E}(g_0 P_I)\subset \mathcal {E}(\pi^{(\alpha_0)}_0(g_iP_I)).
$$
This means that 
$$
D(\pi^{(\alpha_0)}_0\circ g_i)_{eP_I} \hbox{Ad}((u_ip_0 a_{t_i})^{-1})\mathcal {V}(g_0)\subset \mathcal {E}(\pi^{(\alpha_0)}_0(g_iP_I)).
$$
We have 
$$
\hbox{Ad}(( u_i p_0 a_{t_i})^{-1}) \mathcal {V}(g_0)=\hbox{Ad}(p_0^{-1})\mathcal {V}(g_0).
$$
Hence, we conclude that for all $p_0\in M$,
$$
D(\pi^{(\alpha_0)}_0\circ g_0)_{eP_I} \hbox{Ad}(p_0^{-1})\mathcal {V}(g_0)\subset \mathcal {E}(g_0P_I).
$$
Since $g_0\in N_I^{(\alpha_0)}$, it follows from \eqref{eq:comm} that
$$
D(g_0)_{eP_I} D(\pi^{(\alpha_0)}_0)_{eP_I} \hbox{Ad}(p_0^{-1})\mathcal {V}(g_0)\subset \mathcal {E}(g_0P_I).
$$
Let $\mathcal{W}$ be the subspace generated by $\hbox{Ad}(p_0^{-1})\mathcal {V}(g)$, $p_0\in M$.
Then
$$
D(g_0)_{eP_I} D(\pi^{(\alpha_0)}_0)_{eP_I} \mathcal {W}\subset \mathcal {E}(g_0P_I),
$$
and 
$$
D(\pi^{(\alpha_0)}_0)_{eP_I} \mathcal {W}\subset \mathcal {V} (g_0).
$$
Since $\mathcal{W}$ is $\hbox{Ad}(M)$-invariant, it follows from Lemma \ref{l:m} below that
$\mathcal{W}=\mathfrak{g}_{-\alpha_0}+\mathfrak{p}_I$, and $\dim(\mathcal{E})=\dim (U_I^{\alpha_0})$, but we have assumed that
the distribution $\mathcal{E}$ is not equal to the tangent distribution of $U_I^{(\alpha_0)}$.
This contradiction shows that \eqref{eq:gamma} holds.
Therefore, we obtain a new distribution 
$D(\pi^{(\alpha_0)}_0\circ \gamma)\mathcal {E}$ contained in $T(U^{(\alpha_0)}_I)$ and 
defined in a neighbourhood $\pi^{(\alpha_0)}_0(\gamma g_0 P_I)\in g_0\mathcal{O}P_I$.
Moreover, the distribution
$D(\pi^{(\alpha_0)}_0\circ \gamma)\mathcal {E}+\mathcal{E}$
is strictly larger than the distribution $\mathcal{E}$.
Now we can apply the above argument to the distribution 
$D(\pi^{(\alpha_0)}_0\circ \gamma)\mathcal {E}+\mathcal{E}$ defined in a neighbourhood 
of $\pi^{(\alpha_0)}_0(\gamma g_0 P_I)$ contained in $\pi^{(\alpha_0)}_0(g_0\mathcal{O}P_I)$,
and doing this inductively,
we conclude that there exist $\gamma_1=e,\gamma_2,\ldots,\gamma_r\in \Gamma$ and $x\in \pi^{(\alpha_0)}_0(g_0\mathcal{O}P_I)$
such that 
\begin{equation}
\label{eq:eee}
D(\pi^{(\alpha_0)}_0\circ \gamma_1)\mathcal {E}+\cdots+D(\pi^{(\alpha_0)}_0\circ \gamma_r)\mathcal {E}=T_x(U_I^{(\alpha_0)}).
\end{equation}

\vspace{0.5cm}

\noindent\underline{Case 2(a)}:
 $N_I^{(\alpha_0)}=\exp(\mathfrak{g}_{\alpha_0}+\mathfrak{g}_{2\alpha_0})$.
Then
$$
\mathcal{E}(gP_I)\subseteq T_{gP_I}(\exp(\mathfrak{g}_{-\alpha_0}+\mathfrak{g}_{-2\alpha_0})gP_I),
$$
and
\begin{align*}
\mathcal{V}(g) &\subseteq D(g)_{eP_I}^{-1}T_{gP_I}(\exp(\mathfrak{g}_{-\alpha_0}+\mathfrak{g}_{-2\alpha_0})gP_I)
=T_{eP_I}(\exp(\mathfrak{g}_{-\alpha_0}+\mathfrak{g}_{-2\alpha_0})P_I)\\
&=\mathfrak{g}_{-\alpha_0}+\mathfrak{g}_{-2\alpha_0} +\mathfrak{p}_I.
\end{align*}
We also observe that when $\mathcal{E}$ is equal to the tangent distribution
of the orbit foliation of $H_I^{(\alpha_0)}$, then
it follows from the definition of the group $H_I^{(\alpha_0)}$ that
$$
\mathcal{E}(g_0P_I)\nsubseteq T_{g_0P_I}(\exp(\mathfrak{g}_{-2\alpha_0})g_0P_I).
$$
Hence,
$$
\mathcal{V}(g_0) \nsubseteq D(g_0)_{eP_I}^{-1}T_{g_0P_I}(\exp(\mathfrak{g}_{-2\alpha_0})g_0P_I).
$$
Since $g_0\in N_I^{(\alpha_0)}=\exp(\mathfrak{g}_{-\alpha_0}+\mathfrak{g}_{-2\alpha_0})$,
we have $g_0^{-1}\exp(\mathfrak{g}_{-2\alpha_0})=\exp(\mathfrak{g}_{-2\alpha_0})g_0^{-1}$.
Thus, we conclude that 
\begin{equation}
\label{eq:transverse}
\mathcal{V}(g_0) \nsubseteq T_{eP_I}(\exp(\mathfrak{g}_{-2\alpha_0})P_I)=\mathfrak{g}_{-2\alpha_0}+\mathfrak{p}_I.
\end{equation}
Similar argument also gives that 
\begin{equation}
\label{eq:transverse2}
\mathcal{V}(g_0) \nsubseteq \mathfrak{g}_{-\alpha_0}+\mathfrak{p}_I.
\end{equation}

In Case 2(a), we additionally assume that
\begin{equation}
\label{eq:v_g_0}
\mathfrak{g}_{\alpha_0}+\mathfrak{p}_I \nsubseteq \mathcal{V}(g_0).
\end{equation}
We claim that there exists $\gamma\in\Gamma$ such that
\begin{equation}
\label{eq:gamma2}
\gamma g_0P_I\in g_0\mathcal{O}P_I\quad \hbox{and} \quad D(\pi^{(\alpha_0)}_0\circ \gamma)_{g_0P_I}\mathcal {E}(g_0P_I)\nsubseteq \mathcal {E}(\pi^{(\alpha_0)}_0(\gamma g_0 P_I)).
\end{equation}
As in Case 1, we show that 
given arbitrary $p_0\in MA$,
there exists $\gamma_i\in \Gamma$ such that \eqref{eq:gamma_i} holds.
Then $\gamma_i g_0 P_I=g_iP_I\to g_0P_I$, so that for sufficiently large $i$, we have 
$\gamma_i g_0 P_I\in g_0\mathcal{O}P_I$.
Suppose that for those $i$'s, 
we have 
$$
D(\pi^{(\alpha_0)}_0\circ \gamma_i)_{g_0P_I}\mathcal {E}(g_0 P_I)\subset \mathcal {E}(\pi^{(\alpha_0)}_0(g_iP_I)).
$$
This means that 
$$
D(\pi^{(\alpha_0)}_0\circ g_i)_{eP_I} \hbox{Ad}((u_i p_0 a_{t_i})^{-1})\mathcal {V}(g_0)\subset \mathcal {E}(\pi^{(\alpha_0)}_0(g_iP_I)).
$$
We have 
\begin{align*}
\hbox{Ad}(( u_i p_0 a_{t_i})^{-1}) \mathcal {V}(g_0)
&= \hbox{Ad}( a_{t_i}^{-1})\hbox{Ad}(p_0^{-1})\hbox{Ad}(u_i^{-1}) \mathcal {V}(g_0)\\
&= \hbox{Ad}(p_0^{-1}) \hbox{Ad}( a_{t_i}^{-1})\hbox{Ad}(u_i^{-1}) \mathcal {V}(g_0).
\end{align*}
We observe that for $x\in \mathfrak{n}_I^+$, $v_1\in \mathfrak{g}_{-\alpha_0}$, and 
$v_2\in \mathfrak{g}_{-2\alpha_0}$,
\begin{equation}
\label{eq:comp1}
\hbox{Ad}(\exp(x))(v_1+v_2+\mathfrak{p}_I)=(v_1+[x,v_2])+v_2+\mathfrak{p}_I,
\end{equation}
and 
\begin{equation}
\label{eq:comp2}
\hbox{Ad}(a_{t}^{-1})(v_1+v_2+\mathfrak{p}_I)= e^{-\theta_0 t} v_1+e^{-2\theta_0 t} v_2+\mathfrak{p}_I
\end{equation}
for some $\theta_0>0$. 
It follows from \eqref{eq:transverse} that there exists $v=v_1+v_2\in \mathfrak{g}_{-\alpha_0}\oplus \mathfrak{g}_{-2\alpha_0}$ with $v_1\ne 0$
such that $v+\mathfrak{p}_I\in \mathcal{V} (g_0)$.  
Then using \eqref{eq:comp1} and \eqref{eq:comp2}, we deduce that
$$
\frac{\hbox{Ad}(a_{t_i}^{-1})\hbox{Ad}(u_i^{-1}) v}{\|\hbox{Ad}(a_{t_i}^{-1})\hbox{Ad}(u_i^{-1}) v\|}\to \frac{v_1}{\|v_1\|}\in \mathfrak{g}_{-\alpha_0}.
$$
Moreover, we observe that this limit is independent of the sequences $t_i$ and $u_i$.
Therefore, we conclude that every limit point of the sequence subspaces 
$\hbox{Ad}( a_{t_i}^{-1})\hbox{Ad}(u_i^{-1})\mathcal {V}(g)\to \mathcal{V}_\infty$
contains a non-trivial, independent of $p_0$
subspace $\mathcal{V}_\infty \subset \mathfrak{g}_{-\alpha_0}+\mathfrak{p}_I$.
We conclude that for all $p_0\in MA$,
$$
D(\pi^{(\alpha_0)}_0\circ g_0)_{eP_I} \hbox{Ad}(p_0^{-1})\mathcal {V}_\infty\subset \mathcal {E}(g_0P_I).
$$
Since $g_0\in N_I^{(\alpha_0)}$, it follows from \eqref{eq:comm} that also 
$$
D(g_0)_{eP_I} D(\pi^{(\alpha_0)}_0)_{eP_I} \hbox{Ad}(p_0^{-1})\mathcal {V}_\infty\subset \mathcal {E}(g_0P_I).
$$
Let $\mathcal{W}$ be the subspace generated by $\hbox{Ad}(p_0^{-1})\mathcal {V}_\infty$, $p_0\in MA$. Then
$$
D(g_0)_{eP_I} D(\pi^{(\alpha_0)}_0)_{eP_I} \mathcal {W}\subset \mathcal {E}(g_0P_I),
$$
and 
$$
D(\pi^{(\alpha_0)}_0)_{eP_I} \mathcal {W}\subset \mathcal {V} (g_0).
$$
Since $\mathcal{W}\subset \mathfrak{g}_{-\alpha_0}+\mathfrak{p}_I$ is $\hbox{Ad}(M)$-invariant, 
it follows from  Lemma \ref{l:m} below that 
$$
\mathcal{W}=\mathfrak{g}_{-\alpha_0}+\mathfrak{p}_I.
$$
However, this contradicts our assumption \eqref{eq:v_g_0}. Hence, we conclude that \eqref{eq:gamma2} holds. 
This gives a new distribution 
$D(\pi^{(\alpha_0)}_0\circ \gamma)\mathcal {E}$ contained in $T(U^{(\alpha_0)}_I)$ and 
defined in a neighbourhood $\pi^{(\alpha_0)}_0(\gamma g_0 P_I)\in g_0\mathcal{O}P_I$.
If the distribution $D(\pi^{(\alpha_0)}_0\circ \gamma)\mathcal {E}+\mathcal{E}$
is equal to the tangent distribution of $U_I^{(\alpha_0)}$ in a neighbourhood of 
$\pi^{(\alpha_0)}_0(\gamma g_0 P_I)$, we obtain \eqref{eq:trans}.
Otherwise, we apply the argument of Case~2(a) to the
distribution $D(\pi^{(\alpha_0)}_0\circ \gamma)\mathcal {E}+\mathcal{E}$ to construct another distribution.
This is possible provided that this new distribution satisfied \eqref{eq:v_g_0}.
Hence, it remains to consider the last case.

\vspace{0.5cm}

\noindent\underline{Case 2(b)}:
 $N_I^{(\alpha_0)}=\exp(\mathfrak{g}_{\alpha_0}+\mathfrak{g}_{2\alpha_0})$,
 and \begin{equation}
\label{eq:v_g_0_2}
\mathfrak{g}_{-\alpha_0}+\mathfrak{p}_I \subseteq \mathcal{V}(g_0).
\end{equation}
As in the previous cases, we show that for arbitrary $p_0\in M$,
there exists $\gamma_i\in \Gamma$  such that \eqref{eq:gamma_i} holds.
Let us suppose that for all sufficiently large $i$,
we have 
$$
D(\pi^{(\alpha_0)}_0\circ \gamma_i)_{g_0P_I}\mathcal {E}(g_0 P_I)\subset \mathcal {E}(\pi^{(\alpha_0)}_0(g_iP_I)).
$$
Then we deduce that
$$
D(\pi^{(\alpha_0)}_0\circ g_i)_{eP_I} \hbox{Ad}((u_i p_0 a_{t_i})^{-1})\mathcal {V}(g_0)\subset \mathcal {E}(\pi^{(\alpha_0)}_0(g_iP_I)),
$$
where 
\begin{align*}
\hbox{Ad}(( u_i p_0 a_{t_i})^{-1}) \mathcal {V}(g_0)
= \hbox{Ad}(p_0^{-1}) \hbox{Ad}( (u_ia_{t_i})^{-1}) \mathcal {V}(g_0).
\end{align*}
It follows from \eqref{eq:v_g_0_2} that the space $\mathcal {V}(g_0)$ is preserved by 
$\hbox{Ad}((u_i a_{t_i})^{-1})$. Hence, we conclude that
$$
D(\pi^{(\alpha_0)}_0\circ g_0)_{eP_I} \hbox{Ad}(p_0^{-1})\mathcal {V}(g_0)\subset \mathcal {E}(g_0P_I).
$$
Let $\mathcal{W}$ be the subspace generated by $\hbox{Ad}(p_0^{-1})\mathcal {V}(g_0)$, $p_0\in M$.
Then
$$
D(g_0)_{eP_I} D(\pi^{(\alpha_0)}_0)_{eP_I} \mathcal {W}\subset \mathcal {E}(g_0P_I),
$$
and 
$$
\mathcal{W}=D(\pi^{(\alpha_0)}_0)_{eP_I} \mathcal {W}\subset \mathcal {V} (g_0).
$$
Since $\mathcal{W}$ is $\hbox{Ad}(M)$-invariant, and \eqref{eq:v_g_0_2} holds,
it follows that 
$$
\mathcal {W}=\mathcal{W}'+\mathfrak{g}_{-\alpha_0}+\mathfrak{p}_I
$$
where $\mathcal{W}'$ is an $\hbox{Ad}(M)$-invariant subspace of $\mathfrak{g}_{-2\alpha_0}$.
Moreover, it follows from \eqref{eq:transverse2} that $\mathcal{W}'\ne 0$.
Hence, by Lemma \ref{l:m}, 
$\mathcal {W}=\mathfrak{g}_{-2\alpha_0}+\mathfrak{g}_{-\alpha_0}+\mathfrak{p}_I$.
However,
 we have assumed that
the distribution $\mathcal{E}$ is not equal to the tangent distribution of $U_I^{(\alpha_0)}$.
This contradiction shows that \eqref{eq:gamma2} holds.
Hence, we obtain a new distribution 
$D(\pi^{(\alpha_0)}_0\circ \gamma)\mathcal {E}$ contained in $T(U^{(\alpha_0)}_I)$ and 
defined in a neighbourhood of $\pi^{(\alpha_0)}_0(\gamma g_0 P_I)\in g_0\mathcal{O}P_I$ such that 
$D(\pi^{(\alpha_0)}_0\circ \gamma)\mathcal {E}+\mathcal{E}$ is strictly larger than
the distribution $\mathcal{E}$.
Applying this argument inductively, 
we conclude that there exist $\gamma_1=e,\gamma_2,\ldots,\gamma_r\in \Gamma$ and $x\in \pi^{(\alpha_0)}_0(g_0\mathcal{O}P_I)$ such that \eqref{eq:eee} holds.

\vspace{0.5cm}

Now we are ready to complete the proof.
The above argument shows that every open subset of $F_I$ 
contains $x$ such that for some $\gamma_1,\gamma_2,\ldots, \gamma_r\in\Gamma$,
the foliations $\mathcal{F}_{\gamma_1},\ldots,\mathcal{F}_{\gamma_r}$ 
satisfy \eqref{eq:trans} at $x$. As we already observed, $\phi$ is 
$C^\infty$ along these foliations.
This implies that $\phi$ is smooth on a neighbourhood of $x$ in $U_I^{(\alpha_0)}$.
This shows that $\phi$, restricted to $U_I^{(\alpha_0)}$, is $C^\infty$ on an open dense set
and completes the proof of the proposition.
\end{proof}

It remains to establish the following lemma which was used in the above proof.

\begin{lemma}
	\label{l:m}
	Let $\alpha_0\in\Delta$ be a simple root. Then
	\begin{enumerate}
		\item[(i)] the representation of $\hbox{\rm Ad}(M)$ on the root space $\mathfrak{g}_{-\alpha_0}$ is irreducible.
		
		\item[(ii)] the representation of $\hbox{\rm Ad}(M)$ on the root space $\mathfrak{g}_{-2\alpha_0}$
		is irreducible provided that $\mathfrak{g}$ has no real rank-one factors.
	\end{enumerate}
\end{lemma}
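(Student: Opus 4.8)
The plan is to reduce both statements to well-known facts about the restricted root system and the structure of the group $M=Z_K(\mathfrak{a})$. The key algebraic input is the description of $\mathfrak{m}=\hbox{Lie}(M)$ together with the action of $\mathfrak{m}$ (hence of the identity component $M^\circ$) on the root spaces $\mathfrak{g}_{\pm\alpha_0}$, and the reduction to rank-one subalgebras. Recall that for a simple root $\alpha_0$, the subalgebra $\mathfrak{g}(\alpha_0)$ generated by $\mathfrak{g}_{\alpha_0}\oplus\mathfrak{g}_{2\alpha_0}\oplus\mathfrak{g}_{-\alpha_0}\oplus\mathfrak{g}_{-2\alpha_0}$ together with the relevant part of $\mathfrak{a}$ is a real rank-one simple Lie algebra (of type $\mathfrak{so}(n,1)$, $\mathfrak{su}(n,1)$, $\mathfrak{sp}(n,1)$ or $\mathfrak{f}_{4(-20)}$), and $\mathfrak{g}_{-2\alpha_0}\ne 0$ precisely in the non-split cases. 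The centralizer $\mathfrak{m}_0=\mathfrak{m}\cap\mathfrak{g}(\alpha_0)$ is then the $\mathfrak{m}$-part of this rank-one algebra, and it acts on $\mathfrak{g}_{-\alpha_0}$ and $\mathfrak{g}_{-2\alpha_0}$ exactly as in the standard models. In those models the actions are: $\mathfrak{g}_{-\alpha_0}\cong\mathbb{R}^{n-1}$, $\mathbb{C}^{n-1}$, $\mathbb{H}^{n-1}$ or $\mathbb{O}$ with the isometry group acting in the evident (transitive-on-the-sphere, hence irreducible) way, and $\mathfrak{g}_{-2\alpha_0}\cong\mathrm{Im}\,\mathbb{C}=\mathbb{R}$, $\mathrm{Im}\,\mathbb{H}=\mathbb{R}^3$, or $\mathrm{Im}\,\mathbb{O}=\mathbb{R}^7$, with $\mathfrak{m}_0$ acting through $\mathrm{SO}(3)$ (resp.\ $\mathrm{SO}(7)$, resp.\ trivially on $\mathbb{R}$), again irreducibly. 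This is the content I would cite (e.g.\ from Knapp's book, or Helgason) rather than re-derive.

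Concretely, for part (i) I would argue as follows. Fix the $\mathfrak{sl}_2$-triple (or rather the rank-one subalgebra) attached to $\alpha_0$; then $\mathfrak{g}_{-\alpha_0}$ is an irreducible module for the compact group $K_0=Z_{K}(\mathfrak{a})\cap\langle\text{rank-one factor}\rangle$, since in each of the four families $K_0$ acts transitively on the unit sphere of $\mathfrak{g}_{-\alpha_0}$ (this is the classical fact underlying the two-point homogeneity of rank-one symmetric spaces). Since $K_0\subset M^\circ\subset M$, irreducibility under $M$ follows a fortiori. Alternatively, and perhaps cleaner to write: the Weyl group element $s_{\alpha_0}$ and the $M$-action generate enough symmetry, but the sphere-transitivity argument is the most transparent. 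For part (ii), the same model computation shows $M^\circ$ acts on $\mathfrak{g}_{-2\alpha_0}\cong\mathrm{Im}\,\mathbb{H}$ or $\mathrm{Im}\,\mathbb{O}$ through the full $\mathrm{SO}(3)$ or $\mathrm{SO}(7)$, which is irreducible; the hypothesis that $\mathfrak{g}$ has no rank-one factors is used only to guarantee that $\alpha_0$ really is a simple root of a higher-rank factor so that the rank-one subalgebra $\mathfrak{g}(\alpha_0)$ sits inside a larger simple factor (the statement would still be true for an honest rank-one factor, but the proposition is only invoked in the higher-rank setting, and stating it this way avoids the degenerate case $\mathfrak{g}_{-2\alpha_0}=\mathfrak{g}$ with $M$ possibly too small — actually the real role of the hypothesis is to ensure $\dim\mathfrak{g}_{-2\alpha_0}\in\{0,1,3,7\}$ arises from a genuine $\mathrm{Sp}(n,1)$ or $\mathrm{F}_{4(-20)}$ embedded non-trivially, where $M$ surjects onto the required orthogonal group).

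The main obstacle I anticipate is bookkeeping rather than conceptual: one must identify, for each possible multiplicity $(\dim\mathfrak{g}_{-\alpha_0},\dim\mathfrak{g}_{-2\alpha_0})$, exactly which compact group the image of $\mathrm{Ad}(M)$ contains, and verify transitivity on the sphere. The subtle point is that $M$ need not be connected and need not be semisimple, so one should check that the \emph{connected} group $\mathrm{Ad}(M^\circ)$ already acts irreducibly (then $M$ does too). This is handled by the rank-one reduction: $\mathrm{Ad}(M^\circ)$ contains the analytic subgroup with Lie algebra $[\mathfrak{m}_0,\mathfrak{m}_0]$ coming from the rank-one factor, and in each family this analytic subgroup is $\mathrm{SO}(n-1)$, $\mathrm{U}(n-1)\cdot\mathrm{Sp}(1)$, $\mathrm{Sp}(n-1)\cdot\mathrm{Sp}(1)$, or $\mathrm{Spin}(7)$, each of which is sphere-transitive on the relevant space. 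I would organize the write-up as: (1) reduce to the rank-one subalgebra $\mathfrak{g}(\alpha_0)$ and its $\mathfrak{m}$-part; (2) invoke the classification of rank-one real simple Lie algebras and their standard realizations; (3) read off the $M^\circ$-action and cite sphere-transitivity in each case; (4) conclude irreducibility for $M$. I will keep the case analysis terse since it is entirely standard structure theory.
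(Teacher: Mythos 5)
Your proposal is essentially correct but follows a genuinely different route from the paper. The paper works with the complexification: for (i) it splits $\mathfrak{g}_{-\alpha_0}\otimes\mathbb{C}$ into the pieces $V(-\beta_0)$ and $V(-\beta_0^\omega)$ attached to the complex simple root(s) restricting to $\alpha_0$, proves each piece is irreducible under $\hat{\mathfrak{m}}$ by a highest-weight/multiplicity argument (one-dimensionality of complex root spaces plus the Freudenthal formula), and uses the Satake-type involution $\omega$ to show complex conjugation swaps the two pieces, so no real invariant subspace survives; for (ii) it reduces to $\mathfrak{g}$ simple, reads off from the classification that in higher rank $\dim\mathfrak{g}_{-2\alpha_0}>1$ occurs only for $\mathfrak{sp}(p,q)$ with $p<q$, and settles that case by an explicit matrix computation ($M$ contains $\mathrm{SU}(2)^p$, one factor acting by conjugation on $\mathrm{Im}\,\mathbb{H}$). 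You instead reduce to the rank-one subalgebra generated by $\mathfrak{g}_{\pm\alpha_0}\oplus\mathfrak{g}_{\pm 2\alpha_0}$ and quote the classification of rank-one simple real Lie algebras together with irreducibility (sphere-transitivity) of the model $M$-actions on the root spaces. Your route is more conceptual and actually proves (ii) without the higher-rank hypothesis (since $\mathrm{Sp}(1)$ on $\mathrm{Im}\,\mathbb{H}$ and $\mathrm{Spin}(7)$ on $\mathrm{Im}\,\mathbb{O}$ act irreducibly); the paper's route stays inside complex root-space combinatorics and avoids the structural bookkeeping of the rank-one reduction.

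Three points need attention in your write-up. First, you must say why the ``$\mathfrak{m}$-part'' $\mathfrak{m}_0$ of the rank-one subalgebra lies in $\mathfrak{m}=Z_{\mathfrak{k}}(\mathfrak{a})$ and not merely in the centraliser of $H_{\alpha_0}$: this holds because every $\mathfrak{a}$-weight occurring in the subalgebra is a multiple of $\alpha_0$, so its zero-weight part sits inside $\mathfrak{g}_0=\mathfrak{m}\oplus\mathfrak{a}$ and its $\mathfrak{k}$-part inside $\mathfrak{m}$; without this the whole reduction does not transfer irreducibility to $\mathrm{Ad}(M)$. Second, your fallback to the analytic subgroup with Lie algebra $[\mathfrak{m}_0,\mathfrak{m}_0]$ is not sufficient: when the multiplicities are $(2,1)$ (rank-one piece of type $\mathfrak{su}(2,1)$) one has $\mathfrak{m}_0\cong\mathfrak{u}(1)$ abelian, so $[\mathfrak{m}_0,\mathfrak{m}_0]=0$ while $\dim_{\mathbb{R}}\mathfrak{g}_{-\alpha_0}=2$; you need the full $\mathfrak{m}_0$ (centre included), which by the first point does lie in $\mathfrak{m}$, so this is fixable --- and your list of groups (``$\mathrm{U}(n-1)\cdot\mathrm{Sp}(1)$'' for the complex case) is garbled and should be corrected. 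Third, your speculation about the role of the no-rank-one-factor hypothesis is off: in the paper it enters only through the classification step in (ii), and in your approach it is simply not needed.
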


\begin{proof}
We consider the complexification $\hat{\mathfrak{g}}=\mathfrak{g}\otimes \mathbb{C}$ of the Lie algebra $\mathfrak{g}$. 
We refer to \cite[Ch.~5, \S4]{onishchik} for basic facts about relationship between
real semisimple Lie algebras and their complexifications.
Let $\mathfrak{h}^+$ be a maximal commutative subalgebra of 
$$
\mathfrak{m}=\hbox{Lie}(M)=\hbox{Lie}(K)\cap \mathfrak{g}_0.
$$
Then $\mathfrak{h}=\mathfrak{h}^+\oplus \mathfrak{a}$ is a maximal commutative subalgebra of $\mathfrak{g}$, $\hat{\mathfrak{h}}=\mathfrak{h}\otimes \mathbb{C}$
is a Cartan subalgebra in $\hat{\mathfrak{g}}$,
and $\hat{\mathfrak{h}}^+=\mathfrak{h}^+\otimes \mathbb{C}$
is a Cartan subalgebra in $\hat{\mathfrak{m}}$.
Under the natural identification $\mathfrak{a}^*\simeq \hat{\mathfrak{a}}(\R)^*$,
the root system $\Phi$ of $\mathfrak{a}$ is identified with the root system of $\hat{\mathfrak{a}}$ in 
$\hat{\mathfrak{g}}$. We denote by $\hat{\Phi}$ the root system 
$\hat{\mathfrak{h}}$ on $\hat{\mathfrak{g}}$. Then
$$
\Phi\cup\{0\}=\{\beta|_{\mathfrak{a}}:\, \beta\in\hat{\Phi} \}.
$$
Moreover, there exists a choice of simple roots $\hat{\Delta}\subset \hat{\Phi}$ and 
$\Delta\subset \Phi$ such that
$$
\Delta\cup\{0\}=\{\beta|_{\mathfrak{a}}:\, \beta\in\hat{\Delta} \}.
$$
We set $\hat{\Phi}_0=\{\beta\in\hat{\Phi}:\, \beta|_{\mathfrak{a}}=0 \}$ and $\hat{\Delta}_0=\Phi\cap \hat{\Phi}_0$.
Then
$$
\hat{\mathfrak{m}}=\hat{\mathfrak{h}}\oplus\left(\bigoplus_{\beta\in\hat{\Phi}_0}  \hat{\mathfrak{g}}_\beta\right),
$$
and for $\alpha\in \Phi$,
\begin{equation}\label{eq:sum}
\mathfrak{g}_\alpha\otimes \mathbb{C}=\bigoplus_{\beta\in \hat{\Phi}: \beta|_{\mathfrak{a}}=\alpha} \hat{\mathfrak{g}}_\beta.
\end{equation}
In particular, $\Phi_0$ is the root system of the semisimple Lie algebra $\hat{\mathfrak{m}}'$
with respect to $\hat{\mathfrak{h}}\cap \hat{\mathfrak{m}}'$.
Moreover, $\hat{\Delta}_0$ provides the set of simple roots for $\hat{\Phi}_0$.

Now in \eqref{eq:sum}, let us consider the case when $\alpha=-\alpha_0$ with $\alpha_0\in \Delta$.
Let $\beta_0 \in \hat{\Delta}$ be such that $\beta_0|_{\mathfrak{a}}=\alpha_0$.
We recall (see \cite[Ch.~5, \S4.3]{onishchik}) that  there exists an involution $\omega:\hat{\Delta}\backslash \hat{\Delta}_0\to \hat{\Delta}\backslash \hat{\Delta}_0$
such that if $\beta|_{\mathfrak{a}}=\alpha_0$ for some $\beta \in \hat{\Delta}$, then either $\beta=\beta_0$ or $\beta=\beta_0^\omega$, and 
for every $\beta\in \hat{\Delta}\backslash \hat{\Delta}_0$, we have a relation
$$
\beta\circ \theta =-\beta^\omega-\sum_{\sigma\in \hat{\Delta}_0} c_{\beta,\sigma} \sigma
$$
for some $c_{\beta,\sigma}\ge 0$.
Since $\beta|_{\mathfrak{h}^+}$ is purely imaginary and $\theta|_{\mathfrak{h}^+}=id$, it follows that
$$
\overline{\beta(x)}=\beta^\omega(x)-\sum_{\sigma\in \hat{\Delta}_0} c_{\beta,\sigma} \sigma(x),\quad x\in \mathfrak{h}^+.
$$
Hence, since this equality also holds for $x\in \mathfrak{a}$, we conclude that
\begin{equation}
\label{eq:conjugation}
\overline{\beta}=\beta^\omega-\sum_{\sigma\in \hat{\Delta}_0} c_{\beta,\sigma} \sigma.
\end{equation}
For $\beta\in \hat{\Phi}$, we set 
$$
\hat{\Phi}^-(\beta)=\left\{\rho\in \hat{\Phi}^-:\, \rho=\beta-\sum_{\sigma\in \hat{\Delta}_0} n_\sigma \sigma \hbox{ with } n_\sigma \ge 0 \right\},
$$
and 
$$
V(\beta)=\bigoplus_{\rho\in \hat{\Phi}^-(-\beta)} \hat{\mathfrak{g}}_\rho.
$$
Then if $\beta_0^\omega\ne \beta_0$,
\begin{equation}
\label{eq:sum2}
\mathfrak{g}_{-\alpha_0}\otimes \mathbb{C}=V(-\beta_0)\oplus V(-\beta_0^\omega),
\end{equation}
and if $\beta_0^\omega= \beta_0$,
\begin{equation}
\label{eq:sum3}
\mathfrak{g}_{-\alpha_0}\otimes \mathbb{C}=V(-\beta_0).
\end{equation}
It is clear that 
the action of $\hat{\mathfrak{m}}$ preserves $V(-\beta_0)$ and $V(-\beta_0^\omega)$.
We claim that these actions are irreducible. It follows from the definition of 
$\hat{\Phi}^-(-\beta_0)$ that $\lambda=-\beta_0|_{\hat{\mathfrak{h}}^+}$
is a highest weight for the representation $\hat{\mathfrak{m}}$ on $V(-\beta_0)$. 
Hence, $V(-\beta_0)$ contains an irreducible subrepresentation $W$
of $\hat{\mathfrak{m}}$ with the highest weight $\lambda$. 
Let us consider arbitrary 
\begin{equation}
\label{eq:mu}
\mu=\lambda -\sum_{\sigma\in\hat{\Delta}_0} n_\sigma \sigma\hbox{ with } n_\sigma\ge 0
\end{equation}
which is dominant (that is, $\left<\mu,\sigma\right>\ge 0$ for all $\sigma\in\hat{\Delta}_0$).
It follows from the Freudenthal Multiplicity formula (see \cite[\S25.1]{fh}) that $\mu$ appears as a weight in $W$.
Now suppose that $W'$ is another irreducible subrepresentation in $V(-\beta_0)$
with the highest weight $\mu$. Then $\mu$ is dominant and of the form \eqref{eq:mu}.
This proves that if the representation $\hat{\mathfrak{m}}$ on $V(-\beta_0)$ is not irreducible, then it 
has a weight $\mu$ of $\hat{\mathfrak{h}}^+$ which has multiplicity greater than one.
Let $\beta \in \hat{\Phi}^-(-\beta_0)$ be such that $\beta|_{\hat{\mathfrak{h}}^+}=\mu$.
Since $\beta|_{\mathfrak{a}}=-\beta_0$, this implies that $\dim(\hat{\mathfrak{g}}_\beta)>1$,
but $\dim(\hat{\mathfrak{g}}_\beta)=1$ for complex semisimple Lie algebras.
This contradiction implies that 
the action of $\hat{\mathfrak{m}}$ on $V(-\beta_0)$
is irreducible. 
The same argument implies that the action of $\hat{\mathfrak{m}}$ on $V(-\beta_0^\omega)$
is also irreducible. It follows from \eqref{eq:conjugation} that the complex conjugation
applied to \eqref{eq:sum2} maps $V(-\beta_0)$ to $V(-\beta_0^\omega)$. This implies that
$\mathfrak{g}_{-\alpha_0}\otimes \mathbb{C}$ contains no non-trivial $\mathfrak{m}$-invariant subspaces which are defined over $\R$. Hence, the action of $\mathfrak{m}$ on
$\mathfrak{g}_{-\alpha_0}$ is irreducible.

Now we consider the action of $\mathfrak{m}$ on the root space $\mathfrak{g}_{-2\alpha_0}$.
The proof obviously reduces to the case when $\mathfrak{g}$ is simple.
Looking through the classification of real simple Lie algebras $\mathfrak{g}$
(see \cite[Table~9]{onishchik}), we notice that 
the only case when the Lie algebra $\mathfrak{g}$ has higher rank and $\dim (\mathfrak{g}_{-2\alpha_0})>1$ is $\mathfrak{g}=\mathfrak{sp}(p,q)$ with $p<q$.
In this case, we check irreducibility directly.
Let
$$
S=\left(
\begin{tabular}{lll}
$0$ & $\cdots$ & $1$ \\
$\vdots$ & & $\vdots$ \\
$1$ & $\cdots$ & $0$
\end{tabular}
\right)
\quad \hbox{and} \quad
J=\left(
\begin{tabular}{lll}
0 & 0 & $S$ \\
0 & $I$ & 0\\
$S$ & 0 & 0
\end{tabular}
\right),
$$
where $S$ has dimension $p$ and $I$ is the identity matrix of dimension $q-p$. Then
$$
\mathfrak{g}=\mathfrak{sp}(p,q)=\{X\in \mathfrak{sl}(p+q,\mathbb{H}): \, X^* J+J X=0\},
$$
or more explicitly,
$$
\mathfrak{g}=\left\{\left(
\begin{tabular}{ccc}
$X_{11}$ & $X_{12}$ & $X_{13}$ \\
$X_{21}$ & $X_{22}$ & $-X_{12}^*J$\\
$X_{31}$ & $-JX_{21}^*$ & $-JX_{11}^*J$
\end{tabular}
\right):\, \begin{tabular}{ll}
$X_{22}^*=-X_{22}$, & $\hbox{tr}(X_{22})=0$,\\
$X_{31}^*=-JX_{31}J$, &$X_{13}^*=-JX_{13} J$
\end{tabular} 
\right\}.
$$
Its Cartan subalgebra is 
$$
\mathfrak{a}=\{\hbox{diag}(a_1,\ldots,a_p,0,\ldots,0, -a_p,\ldots, -a_1):\, a_1,\ldots,a_p\in\R  \}.
$$
The roots of the form $-2\alpha_0$ with simple $\alpha_0$ 
are given by $-2a_1,\ldots,-2a_p$, and the corresponding root spaces are
$$
\mathfrak{g}_{-2a_i}=\{\hbox{diag}(0,\ldots,0,x_i,0,\ldots,0)S:\,\; x_i\in \mathbb{H},\, x_i^*=-x_i \}.
$$
We observe that $M$ contains a copy of $\hbox{SU}(2)^p$, and one of the $\hbox{SU}(2)$-factors acts on $\mathfrak{g}_{-2a_i}$ as 
$x_i\mapsto gx_ig^*$, $g\in \hbox{SU}(2)$. This representation is irreducible.
\end{proof}

\section{Completion of the proof}
\label{sec:final}

In this section we complete the proof of our main theorem (Theorem \ref{th:main}) using results established in the previous section. We note that it follows from \cite{Dani} 
that given a $C^0$-semi-conjugacy $\psi:F=G/P\to M$, there exist
a $G$-equivariant factor map $\pi:F\to F_I$, where $F_I=G/P_I$ is another flag manifold with $P_I\supset P$,
and a $C^0$-conjugacy $\phi:F_I\to M$ such that $\psi=\phi\circ \pi$.
Hence, it remains to prove that the homeomorphism $\phi$ is smooth.
The idea for the proof is that 
the conjugacy map $\phi:F_I\to M$ is smooth along 
open dense subsets of the submanifold $U_I^{(\alpha _0)}$ (see Proposition \ref{smoothness along subfoliations}) and its translates by $\rho_0(\delta)$, $\delta\in\Gamma$,
and moreover we have smooth projections to these submanifolds in both $F_I$ and $M$ 
(see Proposition \ref{equivariant projection}) that  determine a point in some open set uniquely and smoothly --- as strings of a \emph{marionettes} puppet. 
We note while the higher rank assumption on $G$ absolutely crucial in Section \ref{s:foliations}, marionettes's argument presented here is applicable to general Zariski dense subgroups.

\begin{proof}[Proof of Theorem \ref{th:main}]
For $\alpha\in\Delta\backslash I$ and $\delta \in \Gamma$, we define 
$$
\pi^{(\alpha)} _{\delta} =  \pi^{(\alpha)} _0 \rho _0 (\delta ).
$$
Each of these maps is defined on the open dense set $\rho_0(\delta)^{-1}U_I$ of $F_I$.
By Proposition \ref{smoothness along subfoliations}, the map $\phi$, restricted to $U_I^{(\alpha)}$, is $C^\infty$ on an open dense subset $\mathcal{U}_{\alpha}$
of $U_I^{(\alpha)}$. Since the maps $\pi_0^{(\alpha)}:U_I\to U_I^{(\alpha)}$ are open, the sets $\mathcal{V}_\alpha=(\pi_0^{(\alpha)})^{-1}(\mathcal{U}_{\alpha})$ are open
dense subset of $U_I$ and, hence, of $F_I$.
By the Baire category theorem, the intersection of the sets $\delta^{-1}(\mathcal{V}_\alpha)$, $\delta\in \Gamma$ and $\alpha\in\Delta\backslash I$,
is non-empty. Let us pick a point $x_0$ that belongs to this intersection. 

Let $\mathcal{K}^{(\alpha)}$ be the distribution on $U_I$ defined by $\hbox{ker}(D(\pi^{(\alpha)}_0)_x)$, $x\in U_I$.
We claim that 
\begin{equation}
\label{eq:inter}
\bigcap _{\alpha\in\Delta\backslash I,\, \delta\in\Gamma}  D(\delta)^{-1}_{x_0}\, \mathcal{K}^{(\alpha)}(\delta \cdot x_0) = 0.
\end{equation}
Suppose that \eqref{eq:inter} fails. Then there exists non-zero vector $v\in T_{x_0}(F_I)$ such that 
$$
D(\delta)_{x_0} v \in \mathcal{K}^{(\alpha)}(\delta\cdot x_0)\quad \hbox{for all $\alpha\in \Delta\backslash I$ and $\delta\in \Gamma$.}
$$
We observe that the projection maps $\pi^{(\alpha)}_0$ are algebraic with respect to the Lie coordinates on $U_I$, so that
the distributions $\mathcal{K}^{(\alpha)}$ are also algebraic, and 
it follows from Zariski density of $\Gamma$ that 
$$
D(g)_{x_0} v \in \mathcal{K}^{(\alpha)}(g\cdot x_0)\quad \hbox{for all $g\in G$ such that $g\cdot x_0\in U_I$.}
$$
Without loss of generality, we may assume that $x_0=eP_I$. 
We recall from Lemma \ref{l:proj} that $\pi_0^{(\alpha)}$ is realised 
as a limit of the maps $\delta_n^{(\alpha)}$ on $U_I$. Hence, it follows that for every $g\in N_I^- P_I$, 
\begin{equation}\label{eq:con0}
D(\delta_n^{(\alpha)} g)_{eP} v\to 0.
\end{equation}
We write $g=up$ with $u\in N_I^-$ and $p\in P_I$.
Then $\delta_n^{(\alpha)}u(\delta_n^{(\alpha)})^{-1}$ converges in $G$.
Hence, \eqref{eq:con0} is equivalent to 
\begin{equation}\label{eq:con}
D(\delta_n^{(\alpha)} p)_{eP} v\to 0
\end{equation} 
for all $p\in P_I$.
We make the identification $T_{eP}(F_I)\simeq \mathfrak{g}/\mathfrak{p}_I$ so that 
$D(p)_{eP}w=\hbox{Ad}(p)w$ for $p\in P_I$. Then $\left<\hbox{Ad}(P_I)v\right>$
gives a non-zero $\hbox{Ad}(P_I)$-invariant subspace of $\mathfrak{g}/\mathfrak{p}_I$
such that 
$$
\hbox{Ad}(\delta_n^{(\alpha)})w \to 0\quad\hbox{for all $w\in \left<\hbox{Ad}(P_I)v\right>$}.
$$
This is equivalent to $\left<\hbox{Ad}(P_I)v\right>$ being
contained in $\mathfrak{u}^{(\alpha)}_I+\mathfrak{p}_I\subset \mathfrak{g}/\mathfrak{p}_I$ where $\mathfrak{u}^{(\alpha)}_I$ denotes
the subspace of $\mathfrak{n}^{-}_I$ spanned by all root spaces with roots not proportional to $\alpha$. Since this property must hold for all $\alpha\in\Delta\backslash I$, we obtain a contradiction with Lemma \ref{l:gp} below. 
Hence, we conclude that \eqref{eq:inter} holds.
Then there exist $(\alpha_1,\delta_1),\ldots, (\alpha_\ell,\delta_\ell)\in (\Delta\backslash I)\times \Gamma$ such that 
$$
\bigcap_{i=1}^\ell D(\delta_i )^{-1}_{x}\, \mathcal{K}^{(\alpha_i)}(\delta_i \cdot x) = 0.
$$
This property still holds in an open neighbourhood $\mathcal{O}$ of $x$ in $F_I$. 
Hence, we conclude that the map 
$$ \Pi = \left( \pi^{(\alpha_1)}_{\delta _1},\ldots, \pi^{(\alpha_\ell)}_{\delta _\ell} \right) : \mathcal{O}  \to \prod_{i=1}^\ell U_I^{(\alpha _i)} $$
is an immersion, and hence a local diffeomorphism onto its image.  
Similarly, we also get maps 
$$
\tilde{\pi}^{(\alpha_i)} _{\delta_i}=\tilde{\pi}^{(\alpha_i)}_0\rho(\delta_i):\phi (\mathcal{O})\to \phi (U^{(\alpha_i)}_I)
$$
and define
$$ 
\tilde \Pi = \left( \tilde \pi^{(\alpha_1)}_{\delta _1},\ldots, \tilde \pi^{(\alpha_\ell)}_{\delta _\ell} \right) : \phi(\mathcal{O})  \to \prod_{i=1}^\ell \phi(U_I^{(\alpha _i)} ).
$$
We recall that by Lemma \ref{l:manifold}, $\phi(U_I^{(\alpha _i)})$'s are immersed submanifolds of $U_I$.
Let us consider the following commutative diagram:
\begin{diagram}
	\mathcal{O} &\subset F_I  \rTo^\phi   &\phi(\mathcal{O})\subset M \\
	\dTo^\Pi  & &\dTo_{\tilde{\Pi}} \\ 
	\prod_{i=1}^\ell U_I^{(\alpha _i)} & \rTo^{\Phi} & \prod_{i=1}^\ell \phi(U_I^{(\alpha _i)} )
\end{diagram}
where $\Phi=\prod_{i=1}^\ell \phi|_{U_I^{(\alpha _i)}}$.
We observe that if the neighbourhood $\mathcal{O}$ is sufficiently small,
$$
\Pi(\mathcal{O})\subset \prod_{i=1}^\ell \mathcal{U}_{\alpha_i}.
$$
Since $\phi|_{\mathcal{U}_{\alpha_i}}$ is smooth, and $\phi$ is a homeomorphism,
it follows that $\phi|_{U_I^{(\alpha_i)}}$ is a local diffeomorphism on an open dense set.
We conclude that 
$$
\phi ^{-1}  = \Pi ^{-1} \Phi ^{-1} \tilde{\Pi}
$$
on a non-empty open subset of $\phi(\mathcal{O})$.
This implies that $\phi$ is a local diffeomorphism on a non-empty open subset $\mathcal{U}$ of $F_I$.
We take arbitrary $x \in F_I$.  Since $\Gamma$ acts minimally on $F_I$ (see \cite{Dani1}), there is $\gamma \in \Gamma$ such that $\rho_0(\gamma) x \in \mathcal{U}$.   By equivariance, $\phi$ is $C^{\infty}$ on $\rho_0(\gamma) ^{-1} (\mathcal{U})$,  which is an open neighbourhood of $x$. 
Furthermore, the rank of $\phi$ is maximal everywhere by minimality. Since $\phi$ is a homomorphism, this implies that $\phi$ is a diffeomorphism.
\end{proof}

It remains to prove the following lemma which was used in the above proof.

\begin{lemma}	\label{l:gp}
	Let $V$ be an $\hbox{\rm Ad}(P_I)$-invariant subspace of $\mathfrak{g}$
	which properly contains $\mathfrak{p}_I$. Then $V\cap \mathfrak{g}_\alpha\ne 0$ for some $\alpha\in \Delta\backslash I$. 
	In particular, any 
	non-trivial $\hbox{\rm Ad}(P_I)$-invariant subspace of $\mathfrak{g}/\mathfrak{p}_I$ 
	is not contained 
	in the intersection of $ \mathfrak{u}^{(\alpha)}_I+\mathfrak{p}_I$, $\alpha\in\Delta\backslash I$.
\end{lemma}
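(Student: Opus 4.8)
The plan is to pass to $\bar V:=V/\mathfrak{p}_I$, which is a nonzero $\mathrm{Ad}(P_I)$-submodule of $\mathfrak{g}/\mathfrak{p}_I\cong\mathfrak{n}_I^-$, and to locate inside it the \emph{whole} root space $\mathfrak{g}_{-\alpha}$ for a suitable simple root $\alpha\in\Delta\backslash I$; this gives the lemma (reading the first assertion with $\mathfrak{g}_{-\alpha}$, as surely intended, and the ``in particular'' clause for free since $-\alpha$ is proportional to $\alpha$). I would grade $\mathfrak{n}_I^-$ and $\mathfrak{n}_I$ by the $(\Delta\backslash I)$-height $\mathrm{ht}_I$, writing $\mathfrak{n}_I^-=\bigoplus_{j\ge1}\mathfrak{n}_{I,-j}$ and $\mathfrak{n}_I=\bigoplus_{j\ge1}\mathfrak{n}_{I,j}$, where $\mathfrak{n}_{I,\pm j}$ is the span of the $\mathfrak{g}_\beta$ with $\mathrm{ht}_I(\beta)=\pm j$. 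On $\mathfrak{g}/\mathfrak{p}_I$ the Levi $\mathfrak{l}_I\subset\mathfrak{p}_I$ preserves each graded piece, while $\mathrm{ad}(\mathfrak{n}_I)$ strictly raises $\mathrm{ht}_I$ (everything of height $\ge0$ dies in the quotient); in particular $\mathrm{ad}(\mathfrak{n}_I)$ annihilates $\mathfrak{n}_{I,-1}$.

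The first step would be to show $(\mathfrak{g}/\mathfrak{p}_I)^{\mathfrak{n}_I}=\mathfrak{n}_{I,-1}$. The inclusion $\supseteq$ is the last remark. For $\subseteq$ I would use that the Killing form $B$ is $\mathrm{Ad}$-invariant and, since $\mathfrak{p}_I$ and $\mathfrak{n}_I$ are mutual annihilators under $B$, identifies $\mathfrak{g}/\mathfrak{p}_I$ with $\mathfrak{n}_I^{\,*}$ as $P_I$-modules; then for $v\in\mathfrak{g}/\mathfrak{p}_I$ the relation $[\mathfrak{n}_I,v]=0$ is equivalent to $B(v,[\mathfrak{n}_I,\mathfrak{n}_I])=0$. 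Combined with $[\mathfrak{n}_I,\mathfrak{n}_I]=\bigoplus_{j\ge2}\mathfrak{n}_{I,j}$ and with the fact that $B$ pairs $\mathfrak{g}_{-\gamma}$ with $\mathfrak{g}_\gamma$ nondegenerately, this forces every component of $v$ of height $\le-2$ to vanish, so $v\in\mathfrak{n}_{I,-1}$.

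The second step concludes. Since $\mathrm{ad}(\mathfrak{n}_I)$ is a Lie algebra of nilpotent operators, it has a common kernel vector in the finite-dimensional space $\bar V$, so $\bar V^{\mathfrak{n}_I}\ne0$; by Step 1 it is a nonzero subspace of $\mathfrak{n}_{I,-1}$, and it is $\mathfrak{l}_I$-stable because $\mathfrak{l}_I$ normalises $\mathfrak{n}_I$. I would then decompose $\mathfrak{n}_{I,-1}=\bigoplus_{\alpha\in\Delta\backslash I}\mathfrak{n}_{I,-1}^{(\alpha)}$, with $\mathfrak{n}_{I,-1}^{(\alpha)}$ the sum of the $\mathfrak{g}_\beta$ for which $-\beta=\alpha+(\text{nonnegative combination of }I)$: the summands have pairwise disjoint $\mathfrak{a}$-weights, so any $\mathfrak{l}_I$-submodule of $\mathfrak{n}_{I,-1}$ splits as a direct sum of some of them, and each $\mathfrak{n}_{I,-1}^{(\alpha)}$ is $\mathfrak{l}_I$-irreducible. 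Hence $\mathfrak{n}_{I,-1}^{(\alpha_0)}\subseteq\bar V$ for some $\alpha_0\in\Delta\backslash I$, and in particular $\mathfrak{g}_{-\alpha_0}\subseteq\mathfrak{n}_{I,-1}^{(\alpha_0)}\subseteq\bar V$, that is, $\mathfrak{g}_{-\alpha_0}\subseteq V$; since $-\alpha_0$ is proportional to $\alpha_0\in\Delta\backslash I$ this also shows $V/\mathfrak{p}_I\not\subseteq\mathfrak{u}^{(\alpha_0)}_I+\mathfrak{p}_I$, hence a fortiori $V/\mathfrak{p}_I\not\subseteq\bigcap_{\alpha\in\Delta\backslash I}(\mathfrak{u}^{(\alpha)}_I+\mathfrak{p}_I)$.

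The only non-formal inputs are the two structural facts used above: $[\mathfrak{n}_I,\mathfrak{n}_I]=\bigoplus_{j\ge2}\mathfrak{n}_{I,j}$ (equivalently, $\mathfrak{n}_I$ is generated by its height-one part, which for restricted root systems reduces to the classical statement that $\mathfrak{n}$ is generated as a Lie algebra by the simple root spaces $\mathfrak{g}_\alpha$, $\alpha\in\Delta$; see e.g.\ Knapp's \emph{Lie Groups Beyond an Introduction}), and the $\mathfrak{l}_I$-irreducibility of each $\mathfrak{n}_{I,-1}^{(\alpha)}$ (the case $I=\emptyset$ is exactly Lemma \ref{l:m}(i), and the general case follows by the same weight-multiplicity argument, $\mathfrak{n}_{I,-1}^{(\alpha)}$ being the irreducible $\mathfrak{l}_I$-module generated by $\mathfrak{g}_{-\alpha}$). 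I expect the second of these — keeping track of multiplicities for restricted roots — to be the main point to verify carefully; a more hands-on ``grade and descend'' alternative (bracketing the deepest component of a vector of $\bar V$ against $\mathfrak{n}_{I,1}$) runs into precisely the surjectivity issue that the Killing-form duality in Step 1 sidesteps, so the packaging above seems cleanest.
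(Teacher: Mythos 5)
Your skeleton is viable and it is a genuinely different route from the paper's: Engel's theorem to produce $\bar V^{\mathfrak{n}_I}\ne 0$, the Killing-form duality $\mathfrak{g}/\mathfrak{p}_I\cong\mathfrak{n}_I^{\,*}$ to compute $(\mathfrak{g}/\mathfrak{p}_I)^{\mathfrak{n}_I}=\mathfrak{n}_{I,-1}$, and weight-disjointness of the components $\mathfrak{n}_{I,-1}^{(\alpha)}$ are all correct, and your reading of the statement with $\mathfrak{g}_{-\alpha}$ is the intended one (the paper's proof produces exactly $V\cap\mathfrak{g}_{-\beta}\ne 0$, $\beta\in\Delta\backslash I$). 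The gap is that the two structural facts carrying the whole proof are asserted, not proved, and the justifications you point to do not cover them. For (i), the identity $[\mathfrak{n}_I,\mathfrak{n}_I]=\bigoplus_{j\ge2}\mathfrak{n}_{I,j}$ is classical for complex $\mathfrak{g}$, where root spaces are one-dimensional and $[\hat{\mathfrak{g}}_{\hat\alpha},\hat{\mathfrak{g}}_{\hat\beta}]=\hat{\mathfrak{g}}_{\hat\alpha+\hat\beta}$; for restricted roots the root spaces have higher multiplicity and the surjectivity of these brackets is not a quotable line from Knapp --- it is true (it is the standard ``$\mathfrak{g}_{-1}$ generates $\mathfrak{g}_-$'' statement for real parabolic gradings, provable by the same Killing-form trick you use in Step 1 once one shows no nonzero element of positive grade centralises the grade-$1$ part, or by complexifying), but it needs an argument. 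For (ii), real $\mathfrak{l}_I$-irreducibility of each $\mathfrak{n}_{I,-1}^{(\alpha)}$ is also true, but proving it is essentially a rerun of the paper's Lemma \ref{l:m} at the level of the full Levi: complexify, identify $\mathfrak{n}_{I,-1}^{(\alpha)}\otimes\mathbb{C}$ with the sum of the grade-$1$ components attached to the (at most two) complex simple roots $\beta_0,\beta_0^\omega$ over $\alpha$, prove each is irreducible for the complexified Levi by the multiplicity-one-of-root-spaces/Freudenthal argument, and then observe that the two summands have distinct Levi highest weights (hence are non-isomorphic) and are interchanged by complex conjugation, so no proper invariant subspace is defined over $\R$. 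Since you yourself flag (ii) as the main point, the proposal as written does not yet contain a proof of it, and an honest completion carries at least as much Lie theory as the paper's argument.

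For comparison, the paper's proof of Lemma \ref{l:gp} avoids both inputs. It complexifies at once, notes that $V\otimes\mathbb{C}$ is $\hat{\mathfrak{h}}$-invariant and hence a sum of root spaces, chooses $\beta$ with $\hat{\mathfrak{g}}_{-\beta}\subset V\otimes\mathbb{C}$ minimal with respect to subtracting simple roots, and uses $\mathrm{Ad}(\mathfrak{p}_I)$-invariance together with the fact that a negative root cannot be dominant (i.e.\ $\mathfrak{n}^-$ contains no highest-weight vector of the adjoint representation) to force $\hat{\mathfrak{g}}_{-\alpha_0}\subset V\otimes\mathbb{C}$ for some $\alpha_0\in\hat{\Delta}\backslash\hat{I}$; intersecting with $\mathfrak{g}$ then descends to $\R$. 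That argument needs neither generation in degree one nor any irreducibility of Levi modules, so it is shorter; your packaging is attractive (the Engel plus Killing-form computation of $(\mathfrak{g}/\mathfrak{p}_I)^{\mathfrak{n}_I}$ is clean and more conceptual), but to use it you must supply proofs of (i) and (ii), most naturally by the complexification arguments sketched above.
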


\begin{proof}
	We first consider the case when the Lie algebra $\mathfrak{g}$ is complex semisimple.
	Then the root spaces $\mathfrak{g}_\alpha$, $\alpha\in\Phi$, are one-dimensional and $[\mathfrak{g}_{\alpha_1},\mathfrak{g}_{\alpha_2}]=\mathfrak{g}_{\alpha_1+\alpha_2}$
	for all $\alpha_1,\alpha_2\in \Phi$ such that $\alpha_1+\alpha_2\ne 0$
	(see \cite[Ch.III, \S4]{Helgason}).
	%We recall that $\mathfrak{p}_I=\mathfrak{s}_I\oplus \mathfrak{n}_I^+$
	%where $\mathfrak{s}_I=\mathfrak{m}_I\oplus \mathfrak{a}_I$ is a reductive subalgebra
	%of $\mathfrak{p}_I$ which contains a Cartan subalgebra.
	%Then $V=V'\oplus \mathfrak{p}_I$ where $V'$ is a non-trivial $\mathfrak{s}_I$-invariant
	%subspace of $V$. 
	Since $V$ is $\hbox{Ad}(P_I)$-invariant,
	it is also invariant under the action of the Cartan subalgebra contained in $\mathfrak{p}_I$, and it follows that  $V$ is equal to a sum of root spaces $\mathfrak{g}_{\rho}$ for some $\rho \in \Phi$.
	
	We claim that $\mathfrak{g}_{-\beta}\subset V$
	for some $\beta\in\Delta\backslash I$. To prove this claim, we take $\beta\in \Phi^+\backslash \Phi_I$ such that $\mathfrak{g}_{-\beta}\subset V$, and $\beta$
	is `minimal' in the sense that $\mathfrak{g}_{-(\beta-\alpha)}\nsubseteq V$ for every $\alpha\in \Delta$ such that $\beta-\alpha\in\Phi^+\backslash \Phi_I$. Suppose that $\beta\notin \Delta$.  
	Since $V$ is $\hbox{Ad}(P_I)$-invariant, it also follows that for every $\alpha\in \Delta$,
	$$
	[\mathfrak{g}_\alpha,\mathfrak{g}_{-\beta}]=\mathfrak{g}_{-(\beta-\alpha)}
	\subset V.
	$$
	Hence, according to our choice of $\beta$, either 
	$\mathfrak{g}_{-(\beta-\alpha)}=0$, i.e., $\beta-\alpha\notin \Phi$,
	or $\beta-\alpha\in \Phi_I$.
	We have
	$$
	\beta\in \sum_{\alpha\in \Delta\backslash I} n_\alpha \alpha +\left<I\right>
	$$
	for some $n_\alpha\ge 0$ with $\sum_{\alpha\in \Delta\backslash I} n_\alpha>0$.
	First, we consider the case when $\sum_{\alpha\in \Delta\backslash I} n_\alpha\ge 2$.
	Then clearly, $\beta-\alpha\notin \Phi_I$ for all $\alpha\in\Delta$,
	and it follows that $[\mathfrak{g}_\alpha,\mathfrak{g}_{-\beta}]=0$ for all simple roots $\alpha$.
	This means that $\mathfrak{g}_{-\beta}$ consists of highest weight vectors
	for the adjoint representation of $\mathfrak{g}$.
	In particular, it follows that $-\beta$ must be dominant,
	 but this is impossible because $\beta\in \Phi^+$
	 (recall that all dominant roots are contained in $\Phi^+$). Hence, we conclude that
	$$
	\beta\in \alpha_0 +\left<I\right>
	$$
	for some $\alpha_0\in \Delta\backslash I$. Then for all $\alpha\in\Delta\backslash \{\alpha_0\}$, $\beta-\alpha\notin \Phi_I$, and 
	it follows as before that $[\mathfrak{g}_\alpha,\mathfrak{g}_{-\beta}]=0$ when $\alpha\in\Delta\backslash \{\alpha_0\}$.
	If we also had that  $[\mathfrak{g}_{\alpha_0},\mathfrak{g}_{-\beta}]=0$,
	then $\mathfrak{g}_{-\beta}$ would have consisted of the highest weight
	vectors of for the adjoint representation.
	Then we would get a contradiction as before.
	Therefore, since we assumed that $\beta\notin \Delta$, we conclude that 
	$$
	[\mathfrak{g}_{\alpha_0},\mathfrak{g}_{-\beta}]=\mathfrak{g}_{-(\beta-\alpha_0)}\ne 0,
	$$
	so that $\beta-\alpha_0$ is a root, and since $\beta\in \Phi^+$, 
	$\beta-\alpha_0\in \Phi^+$ too. Then since $V$ is $\hbox{Ad}(P_I)$-invariant,
	$$
	[\mathfrak{g}_{\beta-\alpha_0},\mathfrak{g}_{-\beta}]=\mathfrak{g}_{-\alpha_0}\subset V.
	$$
	This proves the lemma when $\mathfrak{g}$ is a complex semisimple Lie algebra.
	
	To treat the general case, we consider the complex semisimple Lie algebra
	$\hat{\mathfrak{g}}=\mathfrak{g}\otimes \mathbb{C}$. 
	It contains (see \cite[Ch.~5, \S4]{onishchik}) a Cartan subalgebra
	$\hat{\mathfrak{a}}$ of the form 
	$\hat{\mathfrak{a}}=(\mathfrak{h}^+\oplus \mathfrak{a})\otimes \mathbb{C}$,
	where $\mathfrak{h}^+$ is a Cartan subalgebra of $\mathfrak{m}$
	such that the root system $\hat{\Phi}$ and 
	the system of simple roots $\hat{\Delta}\subset \hat{\Phi}$ 
	associated to $\hat{\mathfrak{a}}$ satisfy
	$$
	\{\alpha|_{\mathfrak{a}}:\alpha\in \hat{\Delta}\}=\Delta\cup \{0\}, 
	$$ 
	and
	$$
	\mathfrak{g}_\beta\otimes \mathbb{C}= \bigoplus_{\rho \in \hat{\Phi}:\, \rho|_{\mathfrak{a}}=\beta}\;  \hat{\mathfrak{g}}_\rho \quad\hbox{for all $\beta\in \Phi.$}
	$$
	We set 
	$$
	\hat{I}=\{\alpha\in \hat{\Delta}:\, \alpha|_{\mathfrak{a}}\in I\cup\{0\}\}.
	$$
	Then $\mathfrak{p}_I\otimes \mathbb{C}=\hat{\mathfrak{p}}_{\hat{I}}$.
The previous discussion implies that $\hat{\mathfrak{g}}_{-\alpha}\subset V\otimes \mathbb{C}$ for some $\alpha\in \hat{\Delta}\backslash \hat{I}$, and hence $(\mathfrak{g}_{-\beta}\otimes \mathbb{C})\cap (V\otimes \mathbb{C})\ne 0$ for $\beta\in \Delta\backslash I$ such that $\beta=\alpha|_{\mathfrak{a}}$. This also implies 
	that $\mathfrak{g}_{-\beta}\cap V\ne 0$ and proves the lemma.
\end{proof}

 \section{On topological Sinks}
 \label{sec:sink}
 
 A key assumption for our main result (Theorem \ref{th:main}) is that the lattice action on the manifold $M$ has at least one differentiable sink. While existence of a topological 
 sink is clear for continuous factors of projective actions, we note that a topological sink of a lattice action is
 not always a differentiable sink. In this section, 
 we give an example of a smooth lattice action on a circle bundle over a flag manifold
 to illustrate this.
 
 Let $f_t$ be a flow on the circle $S^1\simeq \R/\Z$ such that $0\in \R/\Z$ is a topological sink for the maps $f_t$, $t> 0$, but it is not a differentiable sink.
  For example, we could take the flow for the vector field on $\R/\Z$ given in local coordinates around $0$ by $x^3 -x$, $x\in \R$.   
 Let $G$ be a non-compact connected semisimple Lie group and $\Gamma$ a lattice subgroup of $G$.
 We choose a Cartan subgroup $A$ of $G$ such that $A\cap \Gamma$ is a lattice in $A$.
 Such a subgroup exists by \cite{Prasad-Raghunathan}. We fix a set $\Delta$ of simple roots 
 and a root $\alpha\in \Delta$. This determines a minimal parabolic subgroup $P$ containing $A$.
 The root $\alpha$ gives a homomorphism $\log(\alpha):P\to \R$.
 Since $A\cap \Gamma$ is a lattice in $A$, there exists $\gamma_0\in A\cap\Gamma$ such that
 $\alpha(\gamma_0)>1$ for all $\alpha\in\Delta$. Then $eP$ is a smooth sink for the action of 
 $\gamma$ on $G/P$. We consider the equivalence relation on $G\times S^1$ defined by
 $$
 (g,x)\sim (gp^{-1}, f_{\log(\alpha)(p)}(x)),\quad g\in G,\; p\in P,\; x\in S^1.
 $$
 Then 
 $$
 M=(G\times S^1)/\sim
 $$
 is a manifold equipped with the smooth $\Gamma$-action
 $$
 [g,x]\mapsto [\gamma g,x],\quad [g,x]\in M,\; \gamma\in\Gamma
 $$
 It is clear $[e,0]\in M$ is fixed by $\gamma_0$. Moreover, for $v\in \mathfrak{n}^-$ and $x\in S^1$,
 $$
 \gamma_0\cdot [\exp(v),x]=\left[\exp(\hbox{Ad}(\gamma_0)v), \phi_{\log(\alpha)(\gamma_0)}(x)\right].
 $$
 Hence, $[e,0]$ is a topological sink of $\gamma_0$, but it is not a differentiable sink.

% \bibliography{projective.bib}{}
%\bibliographystyle{alpha}

%\end{document}
%\end
%%%%%%%%%%%%%%%%%%%%%%%%%%%%%%%%%%%%%%%%%%%%%%%%%%%%%%%%%%%%%%%%%%%%%%%%%%%%%%%%%%%%%%%%%%%%%%%%%%%%%%%%%%%%%%%
%%%%%%%%%%%%%%%%%%%%%%%%%%%%%%%%%%%%%%%%%%%%%%%%%%%%%%%%%%%%%%%%%%%%%%%%%%%%%%%%%%%%%%%%%%%%%%%%%%%%%%%%%%%%%%%

\end{document}